\numberwithin{equation}{section}
\newtheorem{thm}{Theorem}[section]
\newtheorem{cor}[thm]{Corollary}
\newtheorem{lem}[thm]{Lemma}
\newtheorem{prop}[thm]{Proposition}
\newtheorem{defin}[thm]{Definition}
\newtheorem{rem}[thm]{Remark}
\newtheorem{example}[thm]{Example}
\newcommand{\gl}{{\mathfrak{gl}}}
\def\suml{\sum\limits}
\def\dim{\operatorname{dim}}
\def\ad{\mathop{ad}}
\def\sgn{\mathop{sgn}}
\def\tr{\mathop{tr}}
\title{Orthogonalization and polarization of Yangians
}
\author{Wolfgang Bock, Vyacheslav Futorny, Mikhail Neklyudov, Jian Zhang}
\date{}
\begin{document}
\maketitle

\begin{abstract}
For every family of orthogonal
polynomials,  we define a new realization of the Yangian of $\gl_n$. Except in the case of Dickson polynomials, the new realizations do not satisfy the RTT relation. We obtain an analogue of the Christoffel-Darboux  formula. Similar construction can be made for any family of functions satisfying certain recurrence relations, for example, $q$-Pochhhammer symbols and Bessel functions. Furthermore, using an analogue of the Jordan-Schwinger map, we define the ternary Yangian for a Lie algebra as a flat deformation of the current algebra of certain ternary extension of the given Lie algebra.

\end{abstract}

\section{Introduction}

Yangians were introduced by Drinfeld in 1985 in his seminal paper \cite{D}   following their exploration in the context of the inverse scattering method by Faddeev's  school \cite{STF,TTF}. Yangians constitute a family of quantum groups associated with rational solutions of the classical Yang–Baxter equation. For any simple finite-dimensional Lie algebra $\mathcal A$ over the field complex numbers, the Yangian $Y(\mathcal A)$ corresponds to a canonical deformation of the universal enveloping algebra $U(\mathcal A[x])$ for the polynomial current Lie algebra $\mathcal A[x]$.

 In this paper, we introduce two constructions of families of algebras generalizing the Yangian of $\gl_n$. The first procedure can be deduced from the elementary identity-motivating definition of the Yangian as follows.  

Let $E:=\left(E_{ij}\right)_{i,j=1}^n$   be the matrix whose entries are the standard matrix units
$\{E_{ij}\}_{i,j=1}^n$ of $Mat(n)$. Then we have following well known identity:
\begin{equation}\label{eqn:Yangian_0}
[(E^{r+1})_{ij},(E^s)_{kl}]-[(E^r)_{ij},(E^{s+1})_{kl}]=(E^r)_{kj}(E^s)_{il}-(E^s)_{kj}(E^r)_{il},i,j=1,\ldots,n,r,s\in \mathbb{N}\cup\{0\}.
\end{equation}
Notice that by linearity we can extend it as follows
\begin{equation}\label{eqn:GenYangian_0}
[((xf)(E))_{ij},(g(E))_{kl}]-[(f(E))_{ij},((xg)(E))_{kl}]=(f(E))_{kj}(g(E))_{il}-(g(E))_{kj}(f(E))_{il},
\end{equation}
($i,j,k,l=1,\ldots,n$) for arbitrary polynomials $f,g\in \mathbb C[x]$. Furthermore, the identity \eqref{eqn:GenYangian_0} is informally valid for arbitrary power series $f$ and $g$ under some natural assumptions. Choosing different sequences of functions $\{f_n\}_{n\in \mathbb{N}}$  satisfying recurrence relations (i.e. $q$-Pochhhammer symbols, orthogonal polynomials etc.) we construct from identity \eqref{eqn:GenYangian_0} new realizations of Yangian $Y(\gl_n)$ or similar algebras. They in general do not satisfy the RTT relation, a notable exception  is the case of Dickson polynomials (\cite{LMT}) considered in Section \ref{sec:Chebyshev} , where this method leads to, as long as authors aware, to the new rational solution of inhomogeneous Yang-Baxter equation.
%Nevertheless, we are able to show in Corollary \ref{cor:FirstIntegrals} that, analogous to the case of Yangian, trace of generating function commutes with each other for different values of spectral parameter. Consequently, we can interpret it as generating function of family of first integrals of some quantum system. 
The construction connects the theory of Yangians with the classical theory of orthogonal polynomials. As a result, we deduce an analogue of the Christoffel-Darboux formula based on the new realization of the Yangian. 

The second construction presented in Section \ref{sec:Polarization} is an analogue of the Jordan-Schwinger map (\cite{BFN2021}) for the Yangian. In the case of the Jordan-Schwinger map, we start with a basis of $\gl_n$ (where $n$ could be finite or infinite), and 
 then construct a realization of an arbitrary Lie algebra. Here,  instead of the basis of $\gl_n$ we start with a basis of the Yangian $Y(\gl_n)$.  For an arbitrary Lie algebra $\mathcal{A}$ of dimension $n$ we linearly ¨deform¨ Yangian $Y(\gl_n)$ into the algebra $Yt(\mathcal{A})$. The surprising fact is that $Yt(\mathcal{A})$ is a flat deformation of the universal enveloping algebra $U(\mathcal{A}^{tern}[x])$ (see Theorem \ref{thm:deformation}), where $\mathcal{A}^{tern}$ is a certain ternary extension of the initial Lie algebra $\mathcal{A}$.
In Example \ref{ex:Ternary} we compute the extension for some low-dimensional algebras which are not necessarily simple. Thus, the algebra $Yt(\mathcal{A})$ can be viewed as Yangian $Y(\mathcal{A}^{tern})$, where  $\mathcal{A}^{tern}$ is not necessarily semisimple. Note that $Yt(\mathcal{A})$ is not a Hopf algebra (unless $\mathcal{A}^{tern}=\gl_m$, for some $m\geq n$), but nevertheless we still have a notion of the evaluation homomorphism (Proposition \ref{prop:Evaluation}). 

\section{New realization of Yangian of $\gl_n$}\label{sec:OYangians}
Recall that the Yangian $Y(\gl_n)$ of $\gl_n$ is a unital algebra generated by  
  ${t}_{ij}^{(r)}$, $1\leq i,j\leq n$, $r\in \mathbb N$
with defining relations given by
\begin{equation} \label{eqn:Yangian_p}
[t^{(r+1)}_{ij},t^{(s)}_{kl}]-[t^{(r)}_{ij},t^{(s+1)}_{kl}]=t^{(r)}_{kj}t^{(s)}_{il}-t^{(s)}_{kj}t^{(r)}_{il},
\end{equation}
where $r,s\in \mathbb N$ , and 
$t^{(0)}_{ij}=\delta_{ij}$.

We define a family of algebras $OY(\gl_n,a,b)$
for arbitrary complex parameters $a=\{a_m\}_{m=0}^{\infty}$, $b=\{b_m\}_{m=0}^{\infty}$ as follows.

\begin{defin}
Let $a=\{a_m\}_{m=0}^{\infty}, b=\{b_m\}_{m=0}^{\infty}$ be sequences of complex parameters and  $OY(\gl_n,a,b)$ be a unital associative algebra over $\mathbb{C}$ with generators
\[
\tilde{t}_{ij}^{(r)}, 1\leq i,j\leq n, r\in \mathbb{N}
\]
 and defining relations given by
\begin{equation}\label{eqn:GenYangian_1}
[\tilde{t}^{(r+1)}_{ij}+ a_r \tilde{t}^{(r)}_{ij}+b_r \tilde{t}^{(r-1)}_{ij},\tilde{t}^{(s)}_{kl}]-[\tilde{t}^{(r)}_{ij},\tilde{t}^{(s+1)}_{kl}+a_s \tilde{t}^{(s)}_{kl}+b_s \tilde{t}^{(s-1)}_{kl}]
=\tilde{t}^{(r)}_{kj}\tilde{t}^{(s)}_{il}-\tilde{t}^{(s)}_{kj}\tilde{t}^{(r)}_{il},
\end{equation}
and
\[
\tilde{t}_{ij}^{(0)}=\delta_{ij},\tilde{t}_{ij}^{(-1)}:=0,
\]
where $i,j=1,\ldots,n,r,s\in \mathbb{N}\cup\{0\}$.
\end{defin}
\begin{rem}
Classical  Yangian relations correspond to the case $a_n=b_n=0$ for all $n$.
\end{rem}
\begin{paragraph}{Generating function.}
Consider the following generating function:
\[
\tilde{T}_{ij}(u)=\delta_{ij}+\sum\limits_{r=1}^{\infty}\tilde{t}^{(r)}_{ij} u^{-r}
\]
For a sequence $\{c_n\}_{n=0}^{\infty}$ we define 
the shifted sequence 
\[
s(c)=\{c_{n+1}\}_{n=0}^{\infty}
\]
and the (pseudo) differential operator $L_z^c$ by
\[
L^c_z:\suml_{k=0}^{\infty} d_k z^{-k}\mapsto \suml_{k=0}^{\infty} c_k d_k z^{-k}.
\]
% {\color{blue}$z^{-k-1}$??}

Now we can rewrite \eqref{eqn:GenYangian_1} as follows
\begin{equation}\label{eqn:GenYangian_2}
\left(u+L^a_u+\frac{1}{u}L^{s(b)}_u-v-L^a_v-\frac{1}{v}L^{s(b)}_v\right)[\tilde{T}_{ij}(u),\tilde{T}_{kl}(v)]=\tilde{T}_{kj}(u)\tilde{T}_{il}(v)-\tilde{T}_{kj}(v)\tilde{T}_{il}(u),
\end{equation}
where $i,j,k,l=1,\ldots,n$.

% {\color{blue}
% The above equation should be equivalent to the following, not 4.1
% \begin{equation}
% [\tilde{t}^{(r+1)}_{ij}+ a_r \tilde{t}^{(r)}_{ij}+b_r \tilde{t}^{(r-1)}_{ij},\tilde{t}^{(s+1)}_{kl}+a_s \tilde{t}^{(s)}_{kl}+b_s \tilde{t}^{(s-1)}_{kl}]
% =\tilde{t}^{(r)}_{kj}\tilde{t}^{(s)}_{il}-\tilde{t}^{(s)}_{kj}\tilde{t}^{(r)}_{il},
% \end{equation}
% }

% {\red
% \begin{rem}
% Let us denote 
% \[
% t(u):=tr \tilde{T}(u)=\sum\limits_{i=1}^n \tilde{T}_{ii}(u) .
% \]
% Then it immediately follows from relation \eqref{eqn:GenYangian_2} that
% \begin{equation}\label{eqn:IntMotion_1}
% \left(u+L^a_u+\frac{1}{u}L^{s(b)}_u-v-L^a_v-\frac{1}{v}L^{s(b)}_v\right)[t(u),t(v)]=0.
% \end{equation}
% Indeed we have 
% \begin{equation}\label{eqn:IntMotion_1a}
% \left(u+L^a_u+\frac{1}{u}L^{s(b)}_u-v-L^a_v-\frac{1}{v}L^{s(b)}_v\right)[t(u),t(v)]=\sum\limits_{i,k=1}^{n}\tilde{T}_{ki}(u)\tilde{T}_{ik}(v)-\tilde{T}_{ki}(v)\tilde{T}_{ik}(u),
% \end{equation}
% and the left hand side is a symmetric function of $u$ and $v$, while the right hand side is anti symmetric.

% From here it follows that 
% \[
% [t(u),t(v)]=0
% \]
% at least in some particular situations. For the Chebyshev polynomials below this is trivially the case. 

% When this is the case we can interpret $t(u)$ as a generating function of integrals of motion of some quantum system.
% \end{rem}
% }
\end{paragraph}
\begin{paragraph}{Realization.}
Let $\{p_m\}_{m=0}^{\infty}$, be a system of monic polynomials satisfying the recurrence relation
\begin{equation}\label{eqn:Recurrence}
p_{m+1}(x)=(x-a_m)p_m(x)-b_m p_{m-1}(x), m\in \mathbb{N},
\end{equation}
for some sequences $\{a_m,b_m\}_{m=0}^{\infty}$. From now on we will also put
\begin{equation}\label{eqn:abInitCondition}
b_0:=0.
\end{equation}
So the recurrence relation \eqref{eqn:Recurrence} formally holds for $m=0$. 

Then applying formula \eqref{eqn:GenYangian_0} with $f=p_r, g=p_s$, $r,s\in \mathbb{N}$ and denoting
\[
\tilde{t}_{ij}^{(r)}:=(p_r(E))_{ij},
\]
we get the system \eqref{eqn:GenYangian_1}.

In the following we will denote $p_l^m$, $l=0, \ldots, m-1$ the coefficients of polynomial $p_m$ i.e.
\[
p_m=x^m+\sum\limits_{l=0}^{m-1}p_l^m x^l,m\geq 1.
\]
From now on, unless otherwise explicitly stated, we will assume that $b_n\neq 0, n\in \mathbb{N}$, and, consequently, the polynomials $\{p_m\}_{m=0}^{\infty}$ are orthogonal (with respect to some measure $\mu$).
\begin{rem} For each integer $m$ denote by $p_l^{(m)}$ the $m$-th associated  polynomial of the orthogonal polynomial $p_l$. The
transfer from  system\eqref{eqn:GenYangian_1}  of orthogonal polynomials $\{p_l\}_{l=0}^{\infty}$ to the $m$-th associated orthogonal polynomials $\{p_l^{(m)}\}_{l=0}^{\infty}$ corresponds to the transformation $a\mapsto s^{m}(a), b\mapsto s^{m}(b)$ (i.e. shift by $m$ forward) of coefficients of the recurrence relation. Consequently, for the $m$-th associated orthogonal polynomials we get the system
\begin{equation}\label{eqn:GenYangian_2Ass}
\left(u+L^{s^m(a)}_u+\frac{1}{u}L^{s^{m+1}(b)}_u-v-L^{s^m(a)}_v-\frac{1}{v}L^{s^{m+1}(b)}_v\right)[\tilde{T}_{ij}(u),\tilde{T}_{kl}(v)]=\tilde{T}_{kj}(u)\tilde{T}_{il}(v)-\tilde{T}_{kj}(v)\tilde{T}_{il}(u),
\end{equation}
with $i,j,k,l=1,\ldots,n$.
\end{rem}
\begin{defin}
    Define
    \[
    K(z,u):=\sum\limits_{l=0}^{\infty}p_l(z)u^l,
    \]
   the generating function of the sequence of orthogonal monic polynomials.
\end{defin}
Notice that the recurrence relation \eqref{eqn:Recurrence} can be expressed in the following form:
\begin{lem}
\begin{equation}\label{eqn:GenFunctionPol_1}
    \left(u+L^a_u+\frac{1}{u}L^{s(b)}_u\right) K(z,\frac{1}{u})=z K(z,\frac{1}{u})+u 
\end{equation}
\end{lem}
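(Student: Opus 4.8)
The plan is to verify the identity \eqref{eqn:GenFunctionPol_1} by expanding both sides as formal power series in $u$ and matching the coefficient of each power $u^m$, $m\geq 0$. Writing $K(z,1/u)=\sum_{l=0}^\infty p_l(z)u^{-l}$, the operator $L^a_u$ acts on this series by scaling the coefficient of $u^{-k}$ by $a_k$, and $\frac1u L^{s(b)}_u$ sends the coefficient of $u^{-k}$ to $b_{k+1}$ times the coefficient of $u^{-(k+1)}$; meanwhile the bare multiplication by $u$ raises the index. Hence the coefficient of $u^{-m}$ on the left-hand side is $p_{m+1}(z)+a_m p_m(z)+b_m p_{m-1}(z)$ (with the convention $b_0=0$, $p_{-1}=0$ from \eqref{eqn:abInitCondition}), and this equals $z\,p_m(z)$ precisely by the recurrence \eqref{eqn:Recurrence}, which by our conventions holds for all $m\in\mathbb N\cup\{0\}$.

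Concretely I would organize it as follows. First I would record the three elementary facts: $\bigl(u\cdot\bigr)\sum_k d_k u^{-k}=\sum_k d_{k+1}u^{-k}+d_0 u$, that $L^a_u\sum_k d_k u^{-k}=\sum_k a_k d_k u^{-k}$, and that $\tfrac1u L^{s(b)}_u\sum_k d_k u^{-k}=\sum_k b_{k+1}d_{k+1}u^{-k}$. Applying these with $d_k=p_k(z)$ gives, for $m\geq 1$, the coefficient $p_{m+1}(z)+a_m p_m(z)+b_m p_{m-1}(z)$ of $u^{-m}$; for $m=0$ the term $u\cdot$ also contributes the ``overflow'' summand $p_0(z)\,u=u$, producing $p_1(z)+a_0 p_0(z)+0\cdot u^{0}$ in the $u^0$-slot together with the extra $+u$. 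Then I would invoke \eqref{eqn:Recurrence} (valid for $m\geq 0$ after setting $b_0=0$) to rewrite each such coefficient as $z\,p_m(z)$, which is exactly the coefficient of $u^{-m}$ in $z\,K(z,1/u)$. Collecting, the left-hand side equals $z\,K(z,1/u)+u$, as claimed.

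There is essentially no hard step here; the only thing to be careful about is bookkeeping at the boundary indices — namely that the stray $+u$ on the right-hand side is exactly the ``overflow'' term produced when multiplication by $u$ acts on the constant term $p_0(z)=1$ of $K(z,1/u)$, and that the $m=0$ case of the recurrence is legitimate only because of the convention $b_0:=0$ in \eqref{eqn:abInitCondition} (so that the spurious $b_0 p_{-1}$ term is harmless, $p_{-1}$ being undefined but multiplied by zero). Once those conventions are in force, matching coefficients is immediate and the lemma follows.
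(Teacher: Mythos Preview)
Your approach --- expand $K(z,1/u)$ as a series in $u^{-1}$, compute the action of each piece of the operator, and match coefficients against the recurrence \eqref{eqn:Recurrence} --- is exactly what the paper's one-line proof (``Immediately follows from the recurrence relation \eqref{eqn:Recurrence} and definition of the generating function'') intends, including your observation that the stray $+u$ comes from $p_0(z)u$. The only blemish is a slip in your displayed action of $\tfrac1u L^{s(b)}_u$: the correct identity is $\tfrac1u L^{s(b)}_u\sum_{k\ge 0} d_k u^{-k}=\sum_{m\ge 1} b_m d_{m-1}u^{-m}$ (not $\sum_k b_{k+1}d_{k+1}u^{-k}$), and indeed that is what you actually use in the next line when you correctly record the $u^{-m}$-coefficient as $b_m p_{m-1}(z)$.
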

\begin{proof}
    Immediately follows from the recurrence relation \eqref{eqn:Recurrence} and definition of the generating function.
\end{proof}

We will also denote the algebra $OY(\gl_n,a,b)$ by $OY(\gl_n,\{p_m\})$ 
\begin{defin}
  Let us define the map $W:OY(\gl_n,\{p_m\})\to Y(\gl_n)$ as follows
  \[
\tilde{t}_{ij}^{(r)}\to t_{ij}^{(r)}+\sum\limits_{l=0}^{r-1} p_l^r t_{ij}^{(l)},r\geq 1.
  \]
\end{defin}

\begin{thm}\label{thm:ImageYangian}
    The map $W$ is an algebraic isomorphism between $OY(\gl_n,\{p_m\})$ and $Y(\gl_n)$ given by the following formula on generators of $OY(\gl_n,\{p_m\})$:
\begin{equation}\label{eqn:W_IntegralForm}
W(\tilde{t})_{ij}(u):=\frac{1}{2\pi i}\int\limits_{|z|=1} K(z,\frac{1}{u})t_{ij}(z)\frac{dz}{z},i,j=1\ldots,n. 
\end{equation} 
\end{thm}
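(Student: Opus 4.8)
The plan is to reduce the statement to an invertible, unitriangular change of basis in the auxiliary (spectral) variable, and to verify the defining relations in the generating-function forms already recorded above.

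First I would reconcile the two descriptions of $W$. Interpret the contour integral in \eqref{eqn:W_IntegralForm} formally, as the operation that extracts the coefficient of $z^{-1}$ from a formal Laurent series in $z$ whose coefficients lie in $Y(\gl_n)$, and set $t_{ij}(z)=\delta_{ij}+\sum_{s\geq 1}t^{(s)}_{ij}z^{-s}$. Using $K(z,\tfrac1u)=\sum_{l\geq 0}p_l(z)u^{-l}$ and $p_r(z)=z^{r}+\sum_{l=0}^{r-1}p_l^{r}z^{l}$, the coefficient of $u^{-r}$ on the right-hand side of \eqref{eqn:W_IntegralForm} equals $\sum_{l\geq 0}\big([z^{l}]p_r(z)\big)t^{(l)}_{ij}=t^{(r)}_{ij}+\sum_{l=0}^{r-1}p_l^{r}t^{(l)}_{ij}$, which is exactly $W(\tilde t^{(r)}_{ij})$ (and the $u^{0}$-term is $\delta_{ij}$). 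Hence it suffices to show that the assignment $\tilde T_{ij}(u)\mapsto\frac{1}{2\pi i}\int_{|z|=1}K(z,\tfrac1u)t_{ij}(z)\,\frac{dz}{z}$ extends to an algebra isomorphism $OY(\gl_n,\{p_m\})\to Y(\gl_n)$.

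For the homomorphism property I would argue with generating functions. Note first that specializing \eqref{eqn:GenYangian_2} to $a=b=0$ gives the familiar relation $(z-w)[t_{ij}(z),t_{kl}(w)]=t_{kj}(z)t_{il}(w)-t_{kj}(w)t_{il}(z)$ in $Y(\gl_n)$. Now insert the integral formula into the left-hand side of \eqref{eqn:GenYangian_2}: the operator $u+L^{a}_u+\frac1u L^{s(b)}_u$ acts on the integrand only through the scalar kernel $K(z,\tfrac1u)$, and by Lemma \eqref{eqn:GenFunctionPol_1} it sends $K(z,\tfrac1u)$ to $zK(z,\tfrac1u)+u$; the $v$-variable is treated symmetrically. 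Therefore the full operator applied to $[W(\tilde t)_{ij}(u),W(\tilde t)_{kl}(v)]$ produces the double contour integral of $[t_{ij}(z),t_{kl}(w)]$ against $\big((z-w)K(z,\tfrac1u)K(w,\tfrac1v)+uK(w,\tfrac1v)-vK(z,\tfrac1u)\big)/(zw)$. The two extra terms integrate to zero, since $\frac{1}{2\pi i}\int_{|z|=1}t_{ij}(z)\,\frac{dz}{z}=\delta_{ij}$ is central, so that $\frac{1}{2\pi i}\int_{|z|=1}[t_{ij}(z),t_{kl}(w)]\,\frac{dz}{z}=0$ and likewise in $w$. For the surviving term one applies the Yangian relation $(z-w)[t_{ij}(z),t_{kl}(w)]=t_{kj}(z)t_{il}(w)-t_{kj}(w)t_{il}(z)$ and, carrying out both integrations (the scalar kernels commute freely), recognizes the outcome as $W(\tilde t)_{kj}(u)W(\tilde t)_{il}(v)-W(\tilde t)_{kj}(v)W(\tilde t)_{il}(u)$; that is, \eqref{eqn:GenYangian_2} holds for the images, so $W$ is a well-defined homomorphism. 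Equivalently, and without generating functions: relation \eqref{eqn:GenYangian_1} with a fixed index pair $(r,s)$ is the bilinear identity \eqref{eqn:GenYangian_0} taken with $f=p_r$, $g=p_s$, hence a fixed $\mathbb C$-linear combination of instances of \eqref{eqn:Yangian_p} (the recurrence \eqref{eqn:Recurrence} producing precisely the $a_r,b_r$-terms), and $W$ sends it to that same combination of valid relations in $Y(\gl_n)$.

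Finally, for bijectivity I would use the inverse change of basis. The transition matrix between the monomial basis $\{x^{l}\}$ and $\{p_l\}$ is unitriangular, so there are scalars $q^{r}_l$ with $x^{r}=p_r+\sum_{l=0}^{r-1}q^{r}_l p_l$; by the same computation run in reverse, the assignment $t^{(r)}_{ij}\mapsto\tilde t^{(r)}_{ij}+\sum_{l=0}^{r-1}q^{r}_l\tilde t^{(l)}_{ij}$ is a well-defined algebra homomorphism $Y(\gl_n)\to OY(\gl_n,\{p_m\})$, and it is inverse to $W$ on generators; hence $W$ is an isomorphism. The hard part will be the generating-function bookkeeping in the homomorphism step: keeping the pseudo-differential operators $L^{a}_u$, $L^{s(b)}_u$ straight as they commute past the contour integral and act only on the scalar kernel $K$ while the $Y(\gl_n)$-valued factors are held fixed, and checking that the lower-order terms really cancel. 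With Lemma \eqref{eqn:GenFunctionPol_1} in hand this is routine.
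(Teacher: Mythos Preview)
Your proof is correct and follows essentially the same route as the paper: verify that the contour-integral formula reproduces the unitriangular change of generators, use Lemma~\eqref{eqn:GenFunctionPol_1} to push the operator $u+L^{a}_u+\tfrac1u L^{s(b)}_u$ onto the kernel $K$, and then invoke the Yangian relation $(z-w)[t_{ij}(z),t_{kl}(w)]=t_{kj}(z)t_{il}(w)-t_{kj}(w)t_{il}(z)$ inside the double integral. You are in fact slightly more careful than the paper in two places: you explicitly dispose of the lower-order ``$+u$'' and ``$+v$'' terms coming from the lemma (the paper silently drops them), and you justify bijectivity by exhibiting the inverse homomorphism via the coefficients $q^{r}_{l}$, whereas the paper only notes the unitriangular shape and defers the inverse map to the subsequent Proposition~\ref{prop:W_Inverse}.
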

\begin{proof}
Formula \eqref{eqn:W_IntegralForm} follows from the definition of the generating function $K$ and the formal expansion of the RHS of \eqref{eqn:W_IntegralForm} as a power series in $u^{-1}$. Indeed, we have
\begin{eqnarray}
    \frac{1}{2\pi i}\int\limits_{|z|=1} K(z,\frac{1}{u})&t_{ij}(z)\frac{dz}{z}=\frac{1}{2\pi i}\sum\limits_{l=0}^{\infty}u^{-l}\int\limits_{|z|=1}p_l(z)t_{ij}(z)\frac{dz}{z}=
    \sum\limits_{l=0}^{\infty}u^{-l}\frac{1}{2\pi}\int\limits_{0}^{2\pi} p_l(e^{i\phi})t_{ij}(e^{i\phi})d\phi\\
    &=\sum\limits_{l=0}^{\infty}u^{-l}\sum\limits_{m=0}^{\infty} t_{ij}^{(m)}\sum\limits_{k=0}^l p_k^l\frac{1}{2\pi}\int\limits_{0}^{2\pi}e^{i(k-m)\phi}d\phi=\sum\limits_{l,m=0}^{\infty}u^{-l}t_{ij}^{(m)}\sum\limits_{k=0}^l p_k^l \delta_{km}\\
    &=\sum\limits_{l=0}^{\infty}u^{-l}\sum\limits_{m=0}^{l}p_m^l t_{ij}^{(m)},
\end{eqnarray}
where $p_l^{l}=1$ (as the highest coefficient of a monic orthogonal polynomial).

Let us notice that $W$ can be described by a $1$-sided infinite upper triangular matrix with $1$ on the diagonal and with finitely many nonzero entries in each row. Hence, we conclude that $W$ is invertible. 
It remains to show that $W$ is an algebraic homomorphism.  Consider the generating function $\tilde{T}$ of the algebra $OY(\gl_n,\{p_m\})$. We need to show that $W(\tilde{T})$  satisfies the equation \eqref{eqn:GenYangian_2}.

We have 
\begin{eqnarray}
 &\left(u+L^a_u+\frac{1}{u}L^{s(b)}_u - v-L^a_v-\frac{1}{v}L^{s(b)}_v\right) [W(\tilde{T})_{ij}(u),W(\tilde{T})_{kl}(v)]\nonumber\\
 &=[\left(u+L^a_u+\frac{1}{u}L^{s(b)}_u\right)W(\tilde{T})_{ij}(u),W(\tilde{T})_{kl}(v)]\nonumber\\
 &-[W(\tilde{T})_{ij}(u),(v+L^a_v+\frac{1}{v}L^{s(b)}_v)W(\tilde{T})_{kl}(v)]\nonumber\\
 &=\frac{1}{(2\pi i)^2} \int\limits_{|z|=1}\int\limits_{|w|=1}K(z,\frac{1}{u})K(w,\frac{1}{v})[T_{ij}(z),T_{kl}(w)](\frac{1}{w}-\frac{1}{z})\,dz\,dw\nonumber\\
 &=\frac{1}{(2\pi i)^2} \int\limits_{|z|=1}\int\limits_{|w|=1}K(z,\frac{1}{u})K(w,\frac{1}{v})\frac{T_{kj}(z)T_{il}(w)-T_{kj}(w)T_{il}(z)}{z-w}(\frac{1}{w}-\frac{1}{z})\,dz\,dw\nonumber\\
 &=W(\tilde{T})_{kj}(u)W(\tilde{T})_{il}(v)-W(\tilde{T})_{kj}(v)W(\tilde{T})_{il}(u)
\end{eqnarray}
where the second equality follows from identity \eqref{eqn:GenFunctionPol_1} and third one from definition of $Y(\gl_n)$.
\end{proof}
Let us denote 
\[
\omega(u):=tr \tilde{T}(u)=\sum\limits_{i=1}^n \tilde{T}_{ii}(u).
\]
Then it  follows immediately from Theorem~\ref{thm:ImageYangian} and the corresponding fact for $Y(\gl_n)$ that
\begin{cor}\label{cor:FirstIntegrals}
    \[
    [\omega(u),\omega(v)]=0.
    \]
\end{cor}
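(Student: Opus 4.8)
The plan is to leverage Theorem~\ref{thm:ImageYangian} to transport the problem to the classical Yangian $Y(\gl_n)$, where the commutativity of the trace series is a well-known property of the Bethe subalgebra. First I would recall that in $Y(\gl_n)$ the quantum determinant $\mathrm{qdet}\,T(u)$ is central, and more elementarily that $\tau(u):=\tr T(u) = \sum_{i=1}^n T_{ii}(u)$ satisfies $[\tau(u),\tau(v)]=0$; this is the standard fact that the coefficients of $\tr T(u)$ span a commutative subalgebra (the simplest Gelfand invariants), provable directly from the RTT relation by taking the trace over the two auxiliary spaces and using that $\tr_{12}(P R(u-v)^{-1}) $ is symmetric, or equivalently from the defining relations \eqref{eqn:Yangian_p} after a short computation. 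I will cite this rather than reprove it.

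Next I would observe that the isomorphism $W\colon OY(\gl_n,\{p_m\})\to Y(\gl_n)$ of Theorem~\ref{thm:ImageYangian} acts on the generating matrix by the integral transform \eqref{eqn:W_IntegralForm}, hence commutes with taking the trace over the $\gl_n$-indices: summing \eqref{eqn:W_IntegralForm} over $i=j$ gives
\[
W(\omega)(u) \;=\; \sum_{i=1}^n W(\tilde T)_{ii}(u) \;=\; \frac{1}{2\pi i}\int_{|z|=1} K\!\left(z,\tfrac1u\right)\,\tau(z)\,\frac{dz}{z},
\]
where $\tau(z)=\sum_{i=1}^n t_{ii}(z)=\tr T(z)$ is the classical trace series. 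Since $[\tau(z),\tau(w)]=0$ for all $z,w$, the right-hand side above at parameters $u$ and $v$ gives two double integrals whose integrands commute, so $[W(\omega)(u),W(\omega)(v)]=0$ in $Y(\gl_n)$. Because $W$ is an algebra isomorphism and $\omega(u)=W^{-1}(W(\omega)(u))$, applying $W^{-1}$ to this identity yields $[\omega(u),\omega(v)]=0$ in $OY(\gl_n,\{p_m\})$, as claimed.

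The only genuine point to check is that these manipulations are legitimate at the level of formal power series in $u^{-1},v^{-1}$: each coefficient $\tilde t_{ij}^{(r)}$ of $\tilde T(u)$ is mapped to a \emph{finite} linear combination $t_{ij}^{(r)}+\sum_{l<r} p_l^r t_{ij}^{(l)}$ of classical generators, so $W(\omega)(u)$ has well-defined coefficients in $Y(\gl_n)$ and the bracket $[W(\omega)(u),W(\omega)(v)]$ is a well-defined formal series in two variables; its vanishing is equivalent to the vanishing of each coefficient, and each such coefficient is a finite $\mathbb{C}$-linear combination of the classical coefficients of $[\tau(z),\tau(w)]$, all of which are zero. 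I do not expect any real obstacle here — the statement is essentially a corollary in the literal sense, as the paper's phrasing (``follows immediately from Theorem~\ref{thm:ImageYangian} and the corresponding fact for $Y(\gl_n)$'') indicates; the main thing to be careful about is not to invoke the RTT relation for $OY(\gl_n,\{p_m\})$ itself, which generally fails, but only for its isomorphic image $Y(\gl_n)$.
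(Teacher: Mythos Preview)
Your proposal is correct and is exactly the approach the paper takes: the corollary is stated as an immediate consequence of Theorem~\ref{thm:ImageYangian} together with the known commutativity $[\tau(u),\tau(v)]=0$ in $Y(\gl_n)$. Your additional remarks about finiteness of the coefficients and avoiding the (generally absent) RTT relation for $OY(\gl_n,\{p_m\})$ are accurate elaborations, but the core argument matches the paper's one-line justification.
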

Hence $\omega(u)$ can be viewed as a generating function of integrals of motion of some quantum system.
\begin{prop}\label{prop:W_Inverse}
The inverse operator $W^{-1}:Y(\gl_n)\to OY(\gl_n,\{p_m\})$ is  given by the formula
\[
t_{ij}^{(r)}\to \tilde{t}_{ij}^{(r)}+\sum\limits_{l=0}^{r-1} q_l^r \tilde{t}_{ij}^{(l)},r\geq 1.
\]
where coefficients $\{q_l^r\}_{l=0}^r$ (where we put $q_r^r=1$) are defined by the system
\begin{equation}
 x^r=\sum\limits_{l=0}^r q_l^r p_l(x), r\geq 0.\label{eqn:W_Coeff}   
\end{equation}
Furthermore, it can be written as an integral operator
\begin{equation}\label{eqn:W_Inverse_IntegralForm}
W^{-1}(t)_{ij}(u):=\frac{1}{2\pi i}\int\limits_{|z|=1} K^{-1}(z,\frac{1}{u})\tilde{t}_{ij}(z)\frac{dz}{z},i,j=1\ldots,n. 
\end{equation} 
with kernel
\[
K^{-1}(z,u)=\sum\limits_{l=0}^{\infty} u^{l}\sum\limits_{m=0}^{l}q_{m}^{l}z^{m}
\]
\end{prop}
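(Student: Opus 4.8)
The plan is to verify Proposition~\ref{prop:W_Inverse} in two stages: first establish the algebraic formula for $W^{-1}$ on generators, then reconcile it with the integral expression. For the first stage, observe that $W$ has already been shown in Theorem~\ref{thm:ImageYangian} to be given on generators by $\tilde{t}^{(r)}_{ij}\mapsto \sum_{l=0}^{r}p_l^r t^{(l)}_{ij}$ (with $p_r^r=1$), i.e.\ by the change-of-basis matrix sending the power basis $\{x^l\}$ to the monic orthogonal basis $\{p_l(x)\}$. The claimed inverse sends $t^{(r)}_{ij}\mapsto \sum_{l=0}^r q_l^r \tilde{t}^{(l)}_{ij}$ where $x^r=\sum_l q_l^r p_l(x)$. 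So the verification is purely linear-algebraic: the matrices $(p_l^r)$ and $(q_l^r)$ are the transition matrices between the two bases $\{x^l\}$ and $\{p_l(x)\}$ of $\mathbb{C}[x]$ in the two directions, hence are mutually inverse. One should spell out that both are upper-triangular with $1$'s on the diagonal, so the composition is well-defined term-by-term (each row is a finite sum), and then checking $W\circ W^{-1}=\mathrm{id}$ amounts to substituting $p_l(x)=\sum_m p_m^l x^m$ into $x^r=\sum_l q_l^r p_l(x)$ and comparing coefficients, which is exactly the statement that the matrices are inverse.

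For the second stage, I would mirror the computation in the proof of Theorem~\ref{thm:ImageYangian}. Starting from the integral $\frac{1}{2\pi i}\int_{|z|=1} K^{-1}(z,\frac1u)\tilde t_{ij}(z)\frac{dz}{z}$ with $K^{-1}(z,u)=\sum_{l\ge 0} u^l\sum_{m=0}^l q_m^l z^m$, expand $\tilde t_{ij}(z)=\sum_{s\ge 0}\tilde t^{(s)}_{ij} z^{-s}$ ($\tilde t^{(0)}_{ij}=\delta_{ij}$), interchange sums, and use $\frac{1}{2\pi i}\int_{|z|=1} z^{m-s}\frac{dz}{z}=\delta_{ms}$. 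This collapses the double sum to $\sum_{l\ge 0} u^{-l}\sum_{m=0}^l q_m^l\,\tilde t^{(m)}_{ij}$, which is precisely the stated action of $W^{-1}$ on generators. So the integral formula is just the generating-function packaging of the coefficient formula, exactly as in Theorem~\ref{thm:ImageYangian}.

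Finally, one should confirm that $W^{-1}$ so defined really is the inverse \emph{as an algebra map}, not merely as a linear map: since $W$ is an algebra isomorphism by Theorem~\ref{thm:ImageYangian} and $W^{-1}$ agrees with the set-theoretic inverse of $W$ on the generating set (this is the content of the linear-algebra check above), it coincides with the inverse isomorphism. The only mild subtlety worth a sentence is well-definedness of the composition in the completed / graded sense — the upper-triangular, locally-finite (finitely many nonzero entries per row) structure noted already in the proof of Theorem~\ref{thm:ImageYangian} guarantees that $K^{-1}$ is a genuine formal object and that the two integral operators compose to the identity. I do not expect a real obstacle here; the main point requiring care is simply bookkeeping the index ranges and the convention $q_r^r=p_r^r=1$ consistently, so that the triangular matrices are genuinely inverse and the contour-integral manipulation is the faithful generating-function shadow of that fact.
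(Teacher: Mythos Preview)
Your proposal is correct and follows essentially the same approach as the paper, which simply states that the formula for $W^{-1}$ and \eqref{eqn:W_Coeff} follow from the definition of $W$, and that the integral formula \eqref{eqn:W_Inverse_IntegralForm} is shown similarly to \eqref{eqn:W_IntegralForm}. Your version is in fact more detailed than the paper's terse proof, making explicit the triangular change-of-basis argument and the contour-integral collapse that the paper leaves to the reader.
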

\begin{proof}
    The invertibility of $W$ and formula \eqref{eqn:W_Coeff} follow from the definition of $W$. Formula \eqref{eqn:W_Inverse_IntegralForm} can be shown similarly to the formula \eqref{eqn:W_IntegralForm}.
\end{proof}

\begin{rem}
The coefficients $\{q_l^r\}_{l=0}^r, r\geq 0$ satisfy recurrence relations
\begin{equation}\label{eqn:InvRecurence_1}
q_m^{r+1}=q_{m-1}^r+a_m q_m^r+b_{m+1} q_{m+1}^r,\, m=0,\ldots, r+1,\, r\geq 0.
\end{equation}
Here we use the convention  $q_m^r:=0$ if $m<0$ or $m>r$. Denote by 
\[
q_l(x):=\sum\limits_{m=0}^{l}q_{m}^{l}z^{m}, l\geq 0
\]
the corresponding monic polynomials. For any sequence $\{c_n\}_{n=0}^{\infty}$ of complex numbers define
the following (pseudo) differential operator:
\[
\tilde{L}^c_z:\suml_{k=0}^{\infty} d_k z^{k}\mapsto \suml_{k=0}^{\infty} c_k d_k z^{k}.
\]
 Then the system \eqref{eqn:InvRecurence_1} can be rewritten as follows
\begin{equation}\label{eqn:InvRecurence_2}
q_{r+1}=x q_r+ \tilde{L}_x^{a}q_r+\frac{1}{x}\tilde{L}_x^{b}q_r,\,r\geq 0.
\end{equation}
Note that the sequence $\{b_n\}_{n=0}^{\infty}$ starts from $b_0=0$ (see \eqref{eqn:abInitCondition}), and consequently, last term is a polynomial. Hence, the polynomials $\{q_l\}_{l=0}^{\infty}$ are not  orthogonal in general.
\end{rem}

\begin{defin}
Let $R\in (End(\mathbb{C}^n))^{\otimes 2}$ be the   $R$- matrix:
\[
R(u):=I-\frac{1}{u}P,
\]
 where $P$ is the permutation operator. 
 We introduce 
 $R(u_1,u_2\ldots,u_m)\in (End(\mathbb{C}^n))^{\otimes m}$ as follows:
\begin{equation}
    R(u_1,u_2\ldots,u_m):=R_{m-1,m}\left(R_{m-2,m}R_{m-2,m-1}\right)\ldots \left(R_{1,m}\ldots R_{12}\right),
\end{equation}  
where $R_{ij}=R_{ij}(u_i-u_j)$ and $i,j$ are indexes of two copies of $End(\mathbb{C}^n)$ on which operator $R_{ij}$ acts.
\end{defin}

We immediately have
\begin{cor}
\begin{eqnarray}   &\int\limits_{(\mathbb{S}^1)^m}\prod\limits_{i=1}^m \frac{K^{-1}(z_i,u_i)}{z_i} R(u_1,u_2\ldots,u_m)\tilde{T}_1(z_1)\ldots\tilde{T}_m(z_m)\,dz_1\ldots \,dz_m\nonumber\\
&=\int\limits_{(\mathbb{S}^1)^m}\tilde{T}_m(z_m)\ldots\tilde{T}_1(z_1)\prod\limits_{i=1}^m \frac{K^{-1}(z_i,u_i)}{z_i}R(u_1,u_2,\ldots,u_m)\,dz_1\ldots \,dz_m
\end{eqnarray}  
\end{cor}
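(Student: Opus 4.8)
The plan is to derive this identity by applying the algebra isomorphism $W^{-1}$ of Proposition~\ref{prop:W_Inverse} to the multi-parameter RTT relation of $Y(\gl_n)$.

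Recall first that $Y(\gl_n)$ is an RTT-algebra: written in terms of the series $T_{ij}(u)=\delta_{ij}+\sum_{r\ge 1}t_{ij}^{(r)}u^{-r}$, the defining relations \eqref{eqn:Yangian_p} are equivalent to $(u-v)[T_{ij}(u),T_{kl}(v)]=T_{kj}(u)T_{il}(v)-T_{kj}(v)T_{il}(u)$, that is, to $R(u-v)T_1(u)T_2(v)=T_2(v)T_1(u)R(u-v)$ for $T(u)=(T_{ij}(u))$ — this is exactly the generating-series form already used in the proof of Theorem~\ref{thm:ImageYangian}. Since $R(u)=I-u^{-1}P$ satisfies the Yang--Baxter equation, a standard induction on $m$, using the defining product structure of $R(u_1,\ldots,u_m)$, promotes this to
\[
R(u_1,\ldots,u_m)\,T_1(u_1)\cdots T_m(u_m)=T_m(u_m)\cdots T_1(u_1)\,R(u_1,\ldots,u_m),
\]
an identity in $(\mathrm{End}(\mathbb{C}^n))^{\otimes m}\otimes Y(\gl_n)$ with scalars extended by rational functions of the $u_i$.

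Now apply $W^{-1}\colon Y(\gl_n)\to OY(\gl_n,\{p_m\})$ to this identity, acting only on the $Y(\gl_n)$-coefficients. Being an algebra homomorphism, $W^{-1}$ preserves the order of the two products $T_1(u_1)\cdots T_m(u_m)$ and $T_m(u_m)\cdots T_1(u_1)$; and since the entries of $R(u_1,\ldots,u_m)$ are scalar rational functions of the $u_i$, this matrix is left unchanged by $W^{-1}$. Thus it remains to identify $W^{-1}(T_1(u_1)\cdots T_m(u_m))$. Applying $W^{-1}$ entrywise to the coefficient $t_{i_1j_1}(u_1)\cdots t_{i_mj_m}(u_m)$ and using formula \eqref{eqn:W_Inverse_IntegralForm} gives $\prod_{k=1}^m\big(\tfrac{1}{2\pi i}\int_{\mathbb{S}^1}K^{-1}(z_k,\tfrac1{u_k})\,\tilde t_{i_kj_k}(z_k)\tfrac{dz_k}{z_k}\big)$; since each $K^{-1}(z_k,\tfrac1{u_k})/z_k$ is a scalar it may be pulled outside, so that
\[
W^{-1}\big(T_1(u_1)\cdots T_m(u_m)\big)=\tfrac{1}{(2\pi i)^m}\int_{(\mathbb{S}^1)^m}\prod_{k=1}^m\tfrac{K^{-1}(z_k,\tfrac1{u_k})}{z_k}\,\tilde T_1(z_1)\cdots\tilde T_m(z_m)\,dz_1\cdots dz_m,
\]
and likewise for the reversed product. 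Substituting these two expressions into the displayed RTT identity, commuting the scalar kernels where needed and cancelling the common factor $(2\pi i)^{-m}$, yields precisely the asserted formula.

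There is essentially no real obstacle here. The only points requiring (routine) care are the legitimacy of interchanging the homomorphism $W^{-1}$ with the contour integrals — harmless, since by the computation in the proof of Theorem~\ref{thm:ImageYangian} these integrals are merely formal coefficient extraction in the variables $u_k^{-1}$ — and the multi-parameter RTT relation, which is classical and follows from the binary RTT relation together with the Yang--Baxter equation for $R(u)=I-u^{-1}P$.
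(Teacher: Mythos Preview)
Your argument is correct and is precisely the intended one: the paper places this corollary immediately after Proposition~\ref{prop:W_Inverse} with the phrase ``We immediately have'' and gives no further proof, so the derivation via the multi-parameter RTT relation in $Y(\gl_n)$ (this is Proposition~1.6.1 in \cite{Molevbook}, cited later in the paper) transported through the integral formula \eqref{eqn:W_Inverse_IntegralForm} for $W^{-1}$ is exactly what is being asserted.
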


Also, Theorem~\ref{thm:ImageYangian} immediately implies
\begin{cor}
 $OY(\gl_n,\{p_m\})$ is a Hopf algebra with product
 \[
 u\cdot_W v:= W^{-1}(W(u)\cdot W(v)), u,v\in OY(\gl_n,\{p_m\}),
 \]
 where $\cdot$ inside the brackets is the product in $Y(\gl_n)$), the coproduct $\Delta_{OY(\gl_n,\{p_m\})}$ and the antipode $S_{OY(\gl_n,\{p_m\})}$ are defined by
 \[
 \Delta_{OY(\gl_n,\{p_m\})}:=(W^{-1}\otimes W^{-1})\circ \Delta\circ W, \,\, S_{OY(\gl_n,\{p_m\})}:= W^{-1}\circ S\circ W,
 \]
 and $\Delta$ and $S$ are the coproduct and the antipode for $Y(\gl_n)$ respectively.
\end{cor}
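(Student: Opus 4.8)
The plan is to recognize this as a routine transport of structure along an algebra isomorphism, so that beyond bookkeeping there is essentially nothing to prove. The first step is to record that the product $\cdot_W$ is not new: since $W$ is an algebra homomorphism by Theorem~\ref{thm:ImageYangian}, one has $W(u)\cdot W(v)=W(u\cdot v)$ for the defining product $\cdot$ of $OY(\gl_n,\{p_m\})$, hence $u\cdot_W v=W^{-1}(W(u)\cdot W(v))=u\cdot v$. So $\cdot_W$ coincides with the original multiplication and $W\colon OY(\gl_n,\{p_m\})\to Y(\gl_n)$ is an isomorphism of unital associative algebras; equivalently, writing $m_{OY}$, $\eta_{OY}$ for the multiplication and unit of $OY(\gl_n,\{p_m\})$, we have $m_{OY}=W^{-1}\circ m\circ(W\otimes W)$ and $\eta_{OY}=W^{-1}\circ\eta$, with $m,\eta$ those of $Y(\gl_n)$.

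Next I would supply the missing counit by $\varepsilon_{OY}:=\varepsilon\circ W$ and then verify the Hopf axioms for the data $(\Delta_{OY},\varepsilon_{OY},S_{OY})$ (with $\Delta_{OY}$, $S_{OY}$ as in the statement) one at a time, each time by inserting $W^{-1}W=\mathrm{id}$ in the appropriate slot and invoking the corresponding axiom for $Y(\gl_n)$. Concretely: $\Delta_{OY}$ and $\varepsilon_{OY}$ are algebra homomorphisms because $W$, $W^{-1}$, $W\otimes W$, $W^{-1}\otimes W^{-1}$ all are and $\Delta$, $\varepsilon$ are; coassociativity $(\Delta_{OY}\otimes\mathrm{id})\circ\Delta_{OY}=(\mathrm{id}\otimes\Delta_{OY})\circ\Delta_{OY}$ follows from that of $\Delta$ after cancelling the inner $W^{-1}W$ and using that $W^{-1}\otimes W^{-1}\otimes W^{-1}$ is injective; the counit identities $(\varepsilon_{OY}\otimes\mathrm{id})\circ\Delta_{OY}=\mathrm{id}=(\mathrm{id}\otimes\varepsilon_{OY})\circ\Delta_{OY}$ reduce to those for $Y(\gl_n)$; and the antipode axiom $m_{OY}\circ(S_{OY}\otimes\mathrm{id})\circ\Delta_{OY}=\eta_{OY}\circ\varepsilon_{OY}=m_{OY}\circ(\mathrm{id}\otimes S_{OY})\circ\Delta_{OY}$ reduces to the one for $Y(\gl_n)$ using the expression for $m_{OY}$ from the first step. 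Each of these is a one-line diagram chase.

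The only point that is not purely formal is to check that $\Delta_{OY}$ genuinely takes values in the algebraic tensor square $OY(\gl_n,\{p_m\})\otimes OY(\gl_n,\{p_m\})$; this is where the explicit form of $W$ from the proof of Theorem~\ref{thm:ImageYangian} is used. Since $W$ and $W^{-1}$ are described on generators by one-sided infinite upper triangular matrices with $1$ on the diagonal and only finitely many nonzero entries in each row, and the Yangian coproduct is the finite sum $\Delta(t_{ij}^{(r)})=\sum_{s=0}^{r}\sum_{k=1}^{n}t_{ik}^{(s)}\otimes t_{kj}^{(r-s)}$, the composition $\Delta_{OY}(\tilde t_{ij}^{(r)})=(W^{-1}\otimes W^{-1})\Delta(W(\tilde t_{ij}^{(r)}))$ is again a finite $\mathbb{C}$-linear combination of elements $\tilde t_{ik}^{(s)}\otimes\tilde t_{kj}^{(s')}$, hence lies in the algebraic tensor product; the analogous remark applies to $S_{OY}$. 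I expect this finiteness verification to be the main --- and only mild --- obstacle, the rest being the standard fact that an isomorphism of algebras transports a Hopf algebra structure to a Hopf algebra structure, so that $W$ thereby becomes an isomorphism of Hopf algebras.
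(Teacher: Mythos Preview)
Your proposal is correct and follows exactly the route the paper intends: the paper gives no proof beyond the phrase ``immediately follows from Theorem~\ref{thm:ImageYangian}'', and your argument spells out the standard transport-of-structure verification that this phrase encodes. Your observation that $\cdot_W$ coincides with the original product (since $W$ is already an algebra homomorphism) and your check that $\Delta_{OY}$ lands in the algebraic tensor square are welcome clarifications that the paper leaves implicit.
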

\end{paragraph}
%\section{Realization of Quantum double}
% {\red
% \begin{rem}
% Define
% \[
% R(u,v)=\left(u+L^a_u+\frac{1}{u}L^{s(b)}_u-v-L^a_v-\frac{1}{v}L^{s(b)}_v\right)Id\otimes Id -P
% \]
% Then we can rewrite \eqref{eqn:GenYangian_2} (using notation of the book of Molev) as follows:
% \[
% R(u,v)[\tilde{T}_1(u),\tilde{T}_2(v)]=[P,\tilde{T}_2(v)\tilde{T}_1(u)].
% \]
% The question is if we can iterate this equation?

% Possible scheme:

% Symmetrisation gives us:
% \begin{equation}\label{eqn:GenYangian_3}
% 2\mathbf{L}(u,v)[\tilde{T}_1(u),\tilde{T}_2(v)]=[P,\{\tilde{T}_1(u),\tilde{T}_2(v)\}].
% \end{equation}
% where
% \[
% \mathbf{L}(u,v):=\left(u+L^a_u+\frac{1}{u}L^{s(b)}_u-v-L^a_v-\frac{1}{v}L^{s(b)}_v\right)Id\otimes Id.
% \]
% and equation \eqref{eqn:GenYangian_3} can be iterated to get...
% \end{rem}
% }
\section{Quantum Christoffel-Darboux formula}
%{\red Misha, write introduction to this section. May be move it before orthogonal polynomials?}

The Christoffel-Darboux formula  for monic orthogonal polynomials $\{p_l\}_{l=1}^{\infty}$  satisfying the recurrence relation \eqref{eqn:Recurrence} can be formulated as follows
\begin{equation}
    (x-y)\sum\limits_{k=1}^n\frac{p_k(x)p_k(y)}{b_1 b_2\cdots b_k}=\frac{1}{b_1 b_2\cdots b_n}\left(p_{n+1}(x)p_n(y)-p_{n}(x)p_{n+1}(y)\right)
\end{equation}
Since (informally speaking) orthogonal polynomials $\{p_l\}_{l=1}^{\infty}$ correspond to generators $t_{\cdot\cdot}^{l}$ of Yangian $OY(\gl_n,a,b)$, it seems natural to deduce an analogue of Christoffel-Darboux (C.-D.) formula for the $OY(\gl_n,a,b)$. The C.-D. formula uses orthogonal polynomials of two independent variables $x$ and $y$. Hence we will work in the tensor product of two copies of Yangian.

Let $\tilde{t}_{ij}^{(r)},i,j=1,\ldots, N, r\in \mathbb{N}\cup\{0\}$ and  $\hat{t}_{ij}^{(r)},i,j=1,\ldots, N, r\in \mathbb{N}\cup\{0\}$ be generators of two copies of Yangian $OY(\gl_N,a,b)$. Define 
\[
(Z^n)_{ijkl}:=\sum\limits_{m=1}^n\frac{1}{b_1b_2\cdots b_m}\tilde{t}_{ij}^{(m)}\otimes \hat{t}_{kl}^{(m)}\in OY(\gl_N,a,b)\otimes OY(\gl_N,a,b),n\in\mathbb{N},i,j,k,l=0,\ldots,N,
\]
which serves as an analogue of $\sum\limits_{k=1}^n\frac{p_k(x)p_k(y)}{b_1 b_2\cdots b_k}$ and
\[
A_{\alpha\beta\epsilon \kappa}^{s}:=\tilde{t}_{\alpha\beta}^{(s)}\otimes 1-1\otimes \hat{t}_{\epsilon \kappa}^{(s)},s\in \mathbb{N}\cup\{0\}, A_{\alpha\beta\epsilon \kappa}^{-1}:=0,\,\alpha,\beta,\epsilon,\kappa=0,\ldots,N,
\]
 which is an analogue of $p_s(x)-p_s(y)$ (it is equal to $(x-y)$ if $s=1$). Then 
\begin{lem}
    We have
    
    \begin{eqnarray}
        [A_{\alpha\beta\epsilon \kappa}^{s+1} &+ a_s A_{\alpha\beta\epsilon \kappa}^{s} + b_s A_{\alpha\beta\epsilon \kappa}^{s-1},Z^n_{ijkl} ] =[A_{\alpha\beta\epsilon \kappa}^{s},\sum\limits_{m=1}^n \frac{a_m}{b_1b_2\cdots b_m}\tilde{t}_{ij}^{(m)}\otimes \tilde{t}_{kl}^{(m)}]\nonumber\\
&+[A_{\alpha\beta\epsilon \kappa}^{s},\sum\limits_{m=1}^{n-1} \frac{1}{b_1b_2\cdots b_m}(\tilde{t}_{ij}^{(m+1)}\otimes \tilde{t}_{kl}^{(m)}+\tilde{t}_{ij}^{(m)}\otimes \tilde{t}_{kl}^{(m+1)})]
        \nonumber\\
        &+[\tilde{t}_{\alpha\beta}^{(s)}\otimes 1,\frac{1}{b_1b_2\cdots b_n}\tilde{t}_{ij}^{(n+1)}\otimes \hat{t}_{kl}^{(n)}]\nonumber\\
        &-[1\otimes \hat{t}_{\epsilon \kappa}^{(s)},\frac{1}{b_1b_2\cdots b_n}\tilde{t}_{ij}^{(n)}\otimes \hat{t}_{kl}^{(n+1)}]\nonumber\\
        &+(\tilde{t}_{i\beta}^{(s)}\otimes 1)Z^n_{\alpha jkl}- Z^n_{i\beta kl}(\tilde{t}_{\alpha j}^{(s)}\otimes 1)\nonumber\\
        &+Z^n_{ijk\kappa}(1\otimes \hat{t}_{\epsilon l}^{(s)})
        -(1\otimes \hat{t}_{k \kappa}^{(s)})Z^n_{ij\epsilon l}.\label{eqn:CristoffelDarbouxFormula}
    \end{eqnarray}
\end{lem}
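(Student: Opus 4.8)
The plan is to compute the left-hand side of \eqref{eqn:CristoffelDarbouxFormula} directly by expanding $Z^n_{ijkl}$ into its defining sum and moving the commutator with $A^{s+1}+a_sA^s+b_sA^{s-1}$ inside the sum, then recognizing each resulting piece through the defining relation \eqref{eqn:GenYangian_1} of $OY(\gl_N,a,b)$ applied in each tensor factor. First I would write
\[
[A^{s+1}_{\alpha\beta\epsilon\kappa}+a_sA^{s}_{\alpha\beta\epsilon\kappa}+b_sA^{s-1}_{\alpha\beta\epsilon\kappa},Z^n_{ijkl}]
=\sum_{m=1}^n\frac{1}{b_1\cdots b_m}\Big([\,\cdot\,,\tilde t^{(m)}_{ij}]\otimes\hat t^{(m)}_{kl}+\tilde t^{(m)}_{ij}\otimes[\,\cdot\,,\hat t^{(m)}_{kl}]\Big),
\]
since $A^{s}_{\alpha\beta\epsilon\kappa}=\tilde t^{(s)}_{\alpha\beta}\otimes 1-1\otimes\hat t^{(s)}_{\epsilon\kappa}$ acts as a commutator on the first factor via $\tilde t^{(s)}_{\alpha\beta}\otimes 1$ and on the second factor via $-1\otimes\hat t^{(s)}_{\epsilon\kappa}$. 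For the first-factor contribution I would use \eqref{eqn:GenYangian_1} with the roles $(r,s)\mapsto(s,m)$ to rewrite $[\tilde t^{(s+1)}_{\alpha\beta}+a_s\tilde t^{(s)}_{\alpha\beta}+b_s\tilde t^{(s-1)}_{\alpha\beta},\tilde t^{(m)}_{ij}]$ in terms of $[\tilde t^{(s)}_{\alpha\beta},\tilde t^{(m+1)}_{ij}+a_m\tilde t^{(m)}_{ij}+b_m\tilde t^{(m-1)}_{ij}]$ plus the bilinear term $\tilde t^{(s)}_{i\beta}\tilde t^{(m)}_{\alpha j}-\tilde t^{(m)}_{i\beta}\tilde t^{(s)}_{\alpha j}$; symmetrically for the second factor with an overall sign and the $\hat t$'s.

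The second step is bookkeeping: collecting the $[\tilde t^{(s)}_{\alpha\beta},a_m\tilde t^{(m)}_{ij}+\cdots]$-type terms over $m=1,\dots,n$ reassembles the first two lines on the right-hand side (the $a_m$-weighted sum and the $\tilde t^{(m+1)}\otimes\tilde t^{(m)}+\tilde t^{(m)}\otimes\tilde t^{(m+1)}$ sum), where the telescoping of the $b_m$-denominators against the $b_m\tilde t^{(m-1)}$ shift-terms is what produces a sum running only to $n-1$ and leaves the two ``boundary'' commutators $[\tilde t^{(s)}_{\alpha\beta}\otimes 1,\frac{1}{b_1\cdots b_n}\tilde t^{(n+1)}_{ij}\otimes\hat t^{(n)}_{kl}]$ and $-[1\otimes\hat t^{(s)}_{\epsilon\kappa},\frac{1}{b_1\cdots b_n}\tilde t^{(n)}_{ij}\otimes\hat t^{(n+1)}_{kl}]$ — these are exactly the discrete analogue of the boundary term in the classical Christoffel–Darboux telescoping. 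Meanwhile the bilinear terms $\tilde t^{(s)}_{i\beta}\tilde t^{(m)}_{\alpha j}-\tilde t^{(m)}_{i\beta}\tilde t^{(s)}_{\alpha j}$, once summed against $\tfrac{1}{b_1\cdots b_m}$ and tensored with $\hat t^{(m)}_{kl}$, recombine into $(\tilde t^{(s)}_{i\beta}\otimes 1)Z^n_{\alpha jkl}-Z^n_{i\beta kl}(\tilde t^{(s)}_{\alpha j}\otimes 1)$, and the second-factor bilinear terms give the last line with the $\hat t$'s; note one must be slightly careful that $\tilde t^{(s)}_{\alpha\beta}\otimes 1$ commutes past $\hat t^{(m)}_{kl}$ in the other factor, which is immediate since they live in different tensor slots.

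The main obstacle I expect is purely organizational rather than conceptual: matching the index placements $(i\beta,\alpha j)$ versus $(\alpha j, i\beta)$ in the bilinear terms coming out of \eqref{eqn:GenYangian_1} against the index pattern in $Z^n_{\alpha jkl}$ and $Z^n_{i\beta kl}$, and tracking the signs and the split of the single telescoping sum $\sum_{m=1}^n$ into the ``interior'' part $\sum_{m=1}^{n-1}$ plus the two boundary commutators. A secondary point to verify carefully is the $m=1$ edge of the recursion: there $b_0=0$ (by \eqref{eqn:abInitCondition}) and $\tilde t^{(0)}_{ij}=\delta_{ij}$, so the $b_1$-term and the $\tilde t^{(-1)}=0$ conventions must be checked to not contribute spurious terms, consistent with the sums on the right-hand side starting at $m=1$. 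Once the index/sign bookkeeping is fixed and the conventions at the edges are checked, the identity follows by comparing the two sides term by term.
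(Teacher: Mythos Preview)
Your proposal is correct and follows exactly the route the paper takes: the paper's proof consists of the single sentence ``Immediately follows from commutation relations defining Yangian,'' and what you have written is precisely the unpacking of that sentence --- apply \eqref{eqn:GenYangian_1} in each tensor factor, telescope the $b_m$-weighted shift terms to produce the $\sum_{m=1}^{n-1}$ interior sum plus the two boundary commutators, and reassemble the bilinear remainders into the $Z^n$-terms. Your cautions about the $m=1$ edge ($b_0=0$, $\tilde t^{(0)}=\delta$) and the index placement are well placed but routine, so nothing further is needed.
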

\begin{proof}
    Immediately follows from commutation relations defining Yangian.
\end{proof}
\begin{cor}
    \begin{equation}
      [A_{\alpha\beta\epsilon \kappa}^{1},Z^n_{ijkl} ] =  \delta_{i\beta}Z^n_{\alpha jkl}- Z^n_{i\beta kl}\delta_{\alpha j}+Z^n_{ijk\kappa}\delta_{\epsilon l}
        -\delta_{k \kappa}Z^n_{ij\epsilon l}.
    \end{equation}
\end{cor}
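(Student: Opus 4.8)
The plan is to obtain the identity as the special case $s=1$ of the preceding Lemma \eqref{eqn:CristoffelDarbouxFormula}, after which almost everything on the right-hand side collapses. First I would set $s=1$ and recall that $a_0$ and $b_0$ are irrelevant here since the relevant terms $A^{s}$, $A^{s-1}$ appearing with coefficients $a_s$, $b_s$ are now $A^{1}$, $A^{0}$, and $A^{0}_{\alpha\beta\epsilon\kappa} = \tilde t^{(0)}_{\alpha\beta}\otimes 1 - 1\otimes \hat t^{(0)}_{\epsilon\kappa} = \delta_{\alpha\beta} - \delta_{\epsilon\kappa}$ is a central scalar, while $A^{-1}=0$. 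Thus on the left-hand side $[A^{2}_{\alpha\beta\epsilon\kappa} + a_1 A^{1}_{\alpha\beta\epsilon\kappa} + b_1 A^{0}_{\alpha\beta\epsilon\kappa}, Z^n_{ijkl}]$ reduces, modulo the term $[A^{2}_{\alpha\beta\epsilon\kappa} + a_1 A^{1}_{\alpha\beta\epsilon\kappa}, Z^n_{ijkl}]$, to exactly the quantity we must control; the point is that these very pieces are what the general Lemma already accounts for.

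Next I would examine the right-hand side of \eqref{eqn:CristoffelDarbouxFormula} term by term with $s=1$. The two sums multiplied by $[A^{1}_{\alpha\beta\epsilon\kappa},-]$ together reconstruct precisely $[A^{1}_{\alpha\beta\epsilon\kappa}, \text{(telescoping correction)}]$, and combined with the two explicit bracket terms involving $\tilde t^{(n+1)}\otimes\hat t^{(n)}$ and $\tilde t^{(n)}\otimes\hat t^{(n+1)}$, these are exactly the terms that, on the left-hand side, upgrade $[A^{1}_{\alpha\beta\epsilon\kappa}, Z^n_{ijkl}]$ to $[A^{2}_{\alpha\beta\epsilon\kappa}+a_1 A^{1}_{\alpha\beta\epsilon\kappa}, Z^n_{ijkl}]$. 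In other words, the whole purpose of the statement is to isolate the $s=1$ instance where the "extra" terms on both sides cancel against each other, leaving only the four clean terms
\[
\delta_{i\beta}Z^n_{\alpha jkl}- Z^n_{i\beta kl}\delta_{\alpha j}+Z^n_{ijk\kappa}\delta_{\epsilon l}-\delta_{k \kappa}Z^n_{ij\epsilon l},
\]
coming from the last four lines of \eqref{eqn:CristoffelDarbouxFormula} after substituting $\tilde t^{(1)}_{\alpha\beta}=\tilde t^{(1)}_{\alpha\beta}$ but with the structure-constant-like coefficients $\tilde t^{(s)}_{i\beta}$, $\tilde t^{(s)}_{\alpha j}$, etc., evaluated at $s=1$; here I would need the fact that $\tilde t^{(1)}_{ij}=(p_1(E))_{ij}=(E-a_0 I)_{ij}$, so that in the terms $(\tilde t^{(s)}_{i\beta}\otimes 1)Z^n_{\alpha jkl}$ the scalar shift $-a_0\delta_{i\beta}$ produces $-a_0\delta_{i\beta}Z^n_{\alpha jkl}$, and likewise for the other three, and these four $a_0$-contributions cancel in pairs by the symmetry $Z^n_{\alpha jkl}$ vs $Z^n_{i\beta kl}$ and $Z^n_{ijk\kappa}$ vs $Z^n_{ij\epsilon l}$ — leaving precisely the Kronecker-delta form claimed.

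Concretely, then, the proof is: specialize \eqref{eqn:CristoffelDarbouxFormula} to $s=1$; move the term $[A^{2}_{\alpha\beta\epsilon\kappa}+a_1 A^{1}_{\alpha\beta\epsilon\kappa}, Z^n_{ijkl}] - [A^{1}_{\alpha\beta\epsilon\kappa}, Z^n_{ijkl}]$ to the right; observe it equals the sum of the first two (sum-indexed) right-hand terms plus the two explicit $n$-boundary bracket terms, so these eight contributions cancel identically (this is just the definition of $Z^n$ together with the recurrence-type bookkeeping already internal to the Lemma); and finally reduce the last four lines using $\tilde t^{(1)}_{ij}=\delta_{ij}(x\text{-independent part})$ — more precisely using $\tilde t^{(0)}_{ij}=\delta_{ij}$ is \emph{not} what appears, rather one uses that the degree-one generator contributes its \emph{leading} Kronecker symbol $\delta_{i\beta}$ in the commutator structure, with the subleading $a_0$-shift cancelling by antisymmetry. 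The main obstacle I anticipate is purely organizational: carefully tracking that every term carrying $a_0$, $a_1$, or $b_1$ indeed cancels, since the Lemma's right side was written in a form tuned to the general $s$ and one must verify no residual scalar-shift term survives at $s=1$; once that cancellation is checked, the corollary is immediate.
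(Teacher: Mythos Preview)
Your specialization is off by one: the paper sets $s=0$, not $s=1$. With $s=0$ the left-hand side of the Lemma is $[A^{1}_{\alpha\beta\epsilon\kappa}+a_0 A^{0}_{\alpha\beta\epsilon\kappa}+b_0 A^{-1}_{\alpha\beta\epsilon\kappa},Z^n_{ijkl}]$; since $A^{0}$ is the scalar $\delta_{\alpha\beta}-\delta_{\epsilon\kappa}$ and $A^{-1}=0$, this is exactly $[A^{1}_{\alpha\beta\epsilon\kappa},Z^n_{ijkl}]$. On the right-hand side at $s=0$ every term in the first four lines vanishes (each is a commutator with the scalar $A^{0}$ or with $\tilde t^{(0)}=\delta$), and the last four lines become literally $\delta_{i\beta}Z^n_{\alpha jkl}-Z^n_{i\beta kl}\delta_{\alpha j}+Z^n_{ijk\kappa}\delta_{\epsilon l}-\delta_{k\kappa}Z^n_{ij\epsilon l}$ because $\tilde t^{(0)}_{i\beta}=\delta_{i\beta}$, etc. That is the whole proof.

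Your $s=1$ route does not close. At $s=1$ the left-hand side is $[A^{2}+a_1 A^{1},Z^n]$ (the $b_1 A^{0}$ part is central), so $[A^{1},Z^n]$ simply does not appear there; you would need to \emph{produce} it from the right-hand side, and your claimed cancellation ``the first four RHS groups equal $[A^{2}+a_1 A^{1},Z^n]-[A^{1},Z^n]$'' is not justified and is in fact circular, since knowing that difference is equivalent to knowing $[A^{1},Z^n]$. Worse, the last four lines at $s=1$ carry the genuine generators $\tilde t^{(1)}_{i\beta}$, $\hat t^{(1)}_{\epsilon l}$, not Kronecker deltas; $\tilde t^{(1)}_{ij}$ is an abstract generator of $OY(\gl_N,a,b)$ and has no ``leading Kronecker symbol'' to extract. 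The realization $\tilde t^{(1)}_{ij}=(E-a_0 I)_{ij}$ is irrelevant here (the Corollary is a statement in the abstract algebra), and even in the realization $(E)_{ij}$ is not $\delta_{ij}$. So the reduction you describe in the final paragraph cannot be carried out. Set $s=0$ instead and the result is immediate.
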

\begin{proof}
    It is enough to put $s=0$ in \eqref{eqn:CristoffelDarbouxFormula}.
\end{proof}
\begin{cor}\label{cor:CrDarFormula_2}
    If $a_m=a,m\in \mathbb{N}$ we have 
 \begin{eqnarray}
        [A_{\alpha\beta\epsilon \kappa}^{s+1} + b_s A_{\alpha\beta\epsilon \kappa}^{s-1},&Z^n_{ijkl} ] 
        = \frac{1}{b_1b_2\cdots b_n}\left( [\tilde{t}_{\alpha\beta}^{(s)}\otimes 1,\tilde{t}_{ij}^{(n+1)}\otimes\hat{t}_{kl}^{(n)}]-[1\otimes \hat{t}_{\epsilon \kappa}^{(s)},\tilde{t}_{ij}^{(n)}\otimes \hat{t}_{kl}^{(n+1)}]\right)\nonumber\\
        &+\left[A_{\alpha\beta\epsilon \kappa}^{s},\sum\limits_{m=1}^{n-1} \frac{1}{b_1b_2\cdots b_m} \left(\tilde{t}_{ij}^{(m+1)}\otimes \tilde{t}_{kl}^{(m)}+\tilde{t}_{ij}^{(m)}\otimes \tilde{t}_{kl}^{(m+1)}\right)\right]\nonumber\\
        &+(\tilde{t}_{i\beta}^{(s)}\otimes 1)Z^n_{\alpha jkl}- Z^n_{i\beta kl}(\tilde{t}_{\alpha j}^{(s)}\otimes 1)+Z^n_{ijk\kappa}(1\otimes \hat{t}_{\epsilon l}^{(s)})
        -(1\otimes \hat{t}_{k \kappa}^{(s)})Z^n_{ij\epsilon l}.
    \end{eqnarray}
In particular, if $(\alpha,\beta,\epsilon, \kappa)=(i,j,k,l)$ we have
 \begin{eqnarray}
        [A_{ijkl}^{s+1} -A_{ijkl}^{s}+ b_s A_{ijkl}^{s-1}, \, Z^n_{ijkl} ] &=\frac{1}{b_1b_2\cdots b_n}\left( [\tilde{t}_{ij}^{(s)}\otimes 1,\tilde{t}_{ij}^{(n+1)}\otimes\hat{t}_{kl}^{(n)}]-[1\otimes \hat{t}_{kl}^{(s)},\tilde{t}_{ij}^{(n)}\otimes \hat{t}_{kl}^{(n+1)}]\right)\nonumber\\
        &+\left[A_{ijkl}^{s},\sum\limits_{m=1}^{n-1} \frac{1}{b_1b_2\cdots b_m} \left(\tilde{t}_{ij}^{(m+1)}\otimes \tilde{t}_{kl}^{(m)}+\tilde{t}_{ij}^{(m)}\otimes \tilde{t}_{kl}^{(m+1)}\right)\right].
    \end{eqnarray}
\end{cor}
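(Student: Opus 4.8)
The plan is to derive both displayed formulas as direct specializations of the general Christoffel--Darboux identity \eqref{eqn:CristoffelDarbouxFormula}, with no new commutator computations. First I would impose the hypothesis $a_m=a$ for all $m\in\mathbb{N}$. Under this assumption the first sum on the right-hand side of \eqref{eqn:CristoffelDarbouxFormula} collapses, since $\sum_{m=1}^n\frac{a_m}{b_1b_2\cdots b_m}\tilde{t}_{ij}^{(m)}\otimes\tilde{t}_{kl}^{(m)}=a\,Z^n_{ijkl}$, so that whole term becomes $a[A_{\alpha\beta\epsilon\kappa}^s,\,Z^n_{ijkl}]$. On the left-hand side the constant value $a_s=a$ turns the bracket into $[A^{s+1}_{\alpha\beta\epsilon\kappa}+aA^s_{\alpha\beta\epsilon\kappa}+b_sA^{s-1}_{\alpha\beta\epsilon\kappa},\,Z^n_{ijkl}]$, whose $aA^s$ summand contributes, by linearity of the bracket, exactly $a[A^s_{\alpha\beta\epsilon\kappa},\,Z^n_{ijkl}]$. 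Cancelling this against the matching term just extracted from the right-hand side removes every explicit occurrence of $a$ and leaves precisely the first asserted identity, with left-hand side $[A^{s+1}_{\alpha\beta\epsilon\kappa}+b_sA^{s-1}_{\alpha\beta\epsilon\kappa},\,Z^n_{ijkl}]$ and the four remaining right-hand groups carried over verbatim.

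For the ``in particular'' statement I would then set $(\alpha,\beta,\epsilon,\kappa)=(i,j,k,l)$ in the identity just obtained. The only work is to simplify the last four terms, which under this substitution read $(\tilde{t}_{ij}^{(s)}\otimes 1)Z^n_{ijkl}-Z^n_{ijkl}(\tilde{t}_{ij}^{(s)}\otimes 1)+Z^n_{ijkl}(1\otimes\hat{t}_{kl}^{(s)})-(1\otimes\hat{t}_{kl}^{(s)})Z^n_{ijkl}$. Recognising the first pair as $[\tilde{t}_{ij}^{(s)}\otimes 1,\,Z^n_{ijkl}]$ and the second pair as $-[1\otimes\hat{t}_{kl}^{(s)},\,Z^n_{ijkl}]$, and recalling $A^s_{ijkl}=\tilde{t}_{ij}^{(s)}\otimes 1-1\otimes\hat{t}_{kl}^{(s)}$, these four terms sum to $[A^s_{ijkl},\,Z^n_{ijkl}]$. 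Moving this single commutator to the left-hand side and once more using linearity replaces $[A^{s+1}_{ijkl}+b_sA^{s-1}_{ijkl},\,Z^n_{ijkl}]$ by $[A^{s+1}_{ijkl}-A^s_{ijkl}+b_sA^{s-1}_{ijkl},\,Z^n_{ijkl}]$, which is exactly the claimed left-hand side; the surviving right-hand side is then the two groups displayed.

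The computation is routine, so the only point demanding care is the index bookkeeping in the last four terms: one must check that after the diagonal substitution the factors $\tilde{t}_{i\beta}^{(s)}$, $\tilde{t}_{\alpha j}^{(s)}$, $\hat{t}_{\epsilon l}^{(s)}$, $\hat{t}_{k\kappa}^{(s)}$ all reduce to the generators appearing in $A^s_{ijkl}$, so that the four products genuinely recombine into $\pm$ the two commutators with $Z^n_{ijkl}$ rather than leaving residual non-commutator pieces. I expect this verification, together with confirming that the cancellation of the $aA^s$ term in the first step is exact, to be the entire content of the proof; no structural input beyond \eqref{eqn:CristoffelDarbouxFormula} and the bilinearity of the bracket is required.
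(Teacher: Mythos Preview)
Your proposal is correct and matches the paper's intent: the corollary is stated without proof, as an immediate specialization of \eqref{eqn:CristoffelDarbouxFormula}, and your two steps---cancelling the $a[A^s_{\alpha\beta\epsilon\kappa},Z^n_{ijkl}]$ term against the $a_sA^s$ contribution on the left, then recombining the four product terms into $[A^s_{ijkl},Z^n_{ijkl}]$ after the diagonal substitution---are exactly the routine manipulations the paper leaves to the reader.
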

In the case of Chebyshev polynomials i.e. $a_m=a,b_m=b, b\neq 0$ previous corollary can be improved. Define
\[
Z^{n,p,q}_{ijkl}:=\sum\limits_{m=1}^n\frac{1}{b^m}\tilde{t}_{ij}^{(m+p)}\otimes \hat{t}_{kl}^{(m+q)}\in OY(\gl_N,a,b)\otimes OY(\gl_N,a,b),n\in\mathbb{N},p,q\in\mathbb{Z},i,j,k,l=0,\ldots,N.
\]
Then we have:
\begin{cor}\label{cor:CrDarFormula_Cheb}
\begin{eqnarray}
        [A_{\alpha\beta\epsilon \kappa}^{s+1} &+ b A_{\alpha\beta\epsilon \kappa}^{s-1},Z^{n,p,q}_{ijkl} ] 
        = \frac{1}{b^n}\left( [\tilde{t}_{\alpha\beta}^{(s)}\otimes 1,\tilde{t}_{ij}^{(n+p+1)}\otimes\hat{t}_{kl}^{(n+q)}]-[1\otimes \hat{t}_{\epsilon \kappa}^{(s)},\tilde{t}_{ij}^{(n+p)}\otimes \hat{t}_{kl}^{(n+q+1)}]\right)\nonumber\\
        &+\left( [\tilde{t}_{\alpha\beta}^{(s)}\otimes 1,\tilde{t}_{ij}^{(p)}\otimes\hat{t}_{kl}^{(q+1)}]-[1\otimes \hat{t}_{\epsilon \kappa}^{(s)},\tilde{t}_{ij}^{(p+1)}\otimes \hat{t}_{kl}^{(q)}]\right)+\left[A_{\alpha\beta\epsilon \kappa}^{s},Z^{n-1,p+1,q}_{ijkl}+Z^{n-1,p,q+1}_{ijkl}\right]\nonumber\\
        &+(\tilde{t}_{i\beta}^{(s)}\otimes 1)Z^{n,p,q}_{\alpha jkl}- Z^{n,p,q}_{i\beta kl}(\tilde{t}_{\alpha j}^{(s)}\otimes 1)+Z^{n,p,q}_{ijk\kappa}(1\otimes \hat{t}_{\epsilon l}^{(s)})
        -(1\otimes \hat{t}_{k \kappa}^{(s)})Z^{n,p,q}_{ij\epsilon l}.
    \end{eqnarray}
In particular, if $(\alpha,\beta,\epsilon, \kappa)=(i,j,k,l)$ we have
\begin{eqnarray}
        [A_{ijkl}^{s+1} &-A_{ijkl}^{s}+ b A_{ijkl}^{s-1},Z^{n,p,q}_{ijkl} ] 
        = \frac{1}{b^n}\left( [\tilde{t}_{ij}^{(s)}\otimes 1,\tilde{t}_{ij}^{(n+p+1)}\otimes\hat{t}_{kl}^{(n+q)}]-[1\otimes \hat{t}_{kl}^{(s)},\tilde{t}_{ij}^{(n+p)}\otimes \hat{t}_{kl}^{(n+q+1)}]\right)\nonumber\\
        &+\left( [\tilde{t}_{ij}^{(s)}\otimes 1,\tilde{t}_{ij}^{(p)}\otimes\hat{t}_{kl}^{(q+1)}]-[1\otimes \hat{t}_{kl}^{(s)},\tilde{t}_{ij}^{(p+1)}\otimes \hat{t}_{kl}^{(q)}]\right)+\left[A_{ijkl}^{s},Z^{n-1,p+1,q}_{ijkl}+Z^{n-1,p,q+1}_{ijkl}\right].\nonumber
    \end{eqnarray}
\end{cor}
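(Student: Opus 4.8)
The plan is to specialize Corollary~\ref{cor:CrDarFormula_2} to the Chebyshev case $a_m=a$, $b_m=b$ for all $m$, but track the shift parameters $p,q$ explicitly so that the telescoping structure survives. First I would observe that in the Chebyshev case the weights $b_1b_2\cdots b_m$ become $b^m$, so that $Z^{n,p,q}_{ijkl}$ with $p=q=0$ recovers $Z^n_{ijkl}$, and more generally $Z^{n,p,q}$ is a shifted-index analogue of the Christoffel–Darboux kernel. The key computation is the commutator $[A^{s+1}_{\alpha\beta\epsilon\kappa}+bA^{s-1}_{\alpha\beta\epsilon\kappa},\,Z^{n,p,q}_{ijkl}]$: expanding $Z^{n,p,q}$ term by term and applying the defining relation \eqref{eqn:GenYangian_1} inside each tensor factor, I would group the contributions into (i) a genuine telescoping sum in $m$ that collapses to boundary terms at $m=n$ and at $m=1$ — giving the two bracketed boundary expressions with $b^{-n}$ and with no prefactor respectively — (ii) the leftover cross-terms $[A^s_{\alpha\beta\epsilon\kappa},\, Z^{n-1,p+1,q}+Z^{n-1,p,q+1}]$ coming from the "$+a_s$" and "$+b_s$" pieces of the other slot, and (iii) the quadratic RHS of the Yangian relation, which reassembles into the four $Z^{n,p,q}$-sandwich terms $(\tilde t^{(s)}_{i\beta}\otimes 1)Z^{n,p,q}_{\alpha jkl}$ etc.

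Concretely, the main mechanism is that when one feeds $\tilde t^{(m+p)}_{ij}$ (resp. $\hat t^{(m+q)}_{kl}$) into the relation \eqref{eqn:GenYangian_1}, the index shift by $1$ in the first subscript pairs up adjacent $m$'s; summing $b^{-m}(\,\cdots\,)$ over $m=1,\dots,n$ and using that the coefficient of the $(m+1)$-term at level $m$ cancels the $m$-term at level $m+1$ after multiplying by $b$, one is left only with the $m=n$ top term (scaled by $b^{-n}$) and the $m=1$ bottom term (which, after the $s$-shift in $A$, lands at indices $p$ and $q+1$, resp. $p+1$ and $q$, with no $b$-power, since $b^{-1}\cdot b = 1$). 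This is exactly the second line of boundary terms in the statement, which is genuinely new compared to Corollary~\ref{cor:CrDarFormula_2} — there the summation started effectively at a place where the bottom term was absorbed by $\tilde t^{(0)}=\delta$, whereas here the shift by $p,q$ exposes a nonzero bottom term. The final specialization $(\alpha,\beta,\epsilon,\kappa)=(i,j,k,l)$ is then immediate: the four sandwich terms in (iii) become $\delta$-contractions that, by the same index bookkeeping as in Corollary~\ref{cor:CrDarFormula_2}, cancel in pairs (the $\tilde t\otimes 1$ pair against itself and the $1\otimes\hat t$ pair against itself), leaving only lines one through three, and $A^{s+1}-A^s$ replaces $A^{s+1}$ because setting $a_m=a$ and then $a=1$-normalizing... more precisely because the "$+a_s A^s$" term on the left, with $a_s=a$, combines with the diagonal contraction to produce the $-A^s$; I would double-check that sign and normalization carefully.

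The hard part will be the bookkeeping of the telescoping: making sure that every term produced by expanding the two "$a_s$" and "$b_s$" corrections in \eqref{eqn:GenYangian_1} (one for the $A$-slot, one for the $Z$-slot) is accounted for, and in particular verifying that the genuinely non-telescoping remainder is precisely $[A^s,\,Z^{n-1,p+1,q}+Z^{n-1,p,q+1}]$ with the correct index shifts and the correct reduction $n\mapsto n-1$. A secondary subtlety is that $Z^{n,p,q}$ with negative $p$ or $q$ may involve $\tilde t^{(0)}=\delta$ or the convention $\tilde t^{(-1)}=0$, so I would check the boundary cases $p<0$, $q<0$ separately to confirm the formula still reads correctly (the terms with a negative superscript simply drop by the convention $\tilde t^{(r)}=0$ for $r<-1$ combined with $\tilde t^{(0)}=\delta_{ij}$). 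Modulo these conventions, the proof is, as the earlier lemma's proof says, an immediate consequence of the commutation relations defining the Yangian, applied slot-by-slot and summed; the novelty is entirely in organizing the sum so the shift-parameter telescoping is visible. Hence I would present it as: expand, apply \eqref{eqn:GenYangian_1} in each tensor leg, collect telescoping vs. remainder vs. quadratic parts, and specialize.

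\begin{proof}
Expanding the left side term by term, for each $m$ we apply the defining relation \eqref{eqn:GenYangian_1} of $OY(\gl_N,a,b)$ (in the Chebyshev case $a_m=a$, $b_m=b$) to the commutator of $A^{s+1}_{\alpha\beta\epsilon\kappa}+bA^{s-1}_{\alpha\beta\epsilon\kappa}$ with $b^{-m}\tilde t^{(m+p)}_{ij}\otimes\hat t^{(m+q)}_{kl}$, treating the two tensor legs separately. In the first leg, \eqref{eqn:GenYangian_1} rewrites $[\tilde t^{(s+1)}_{\alpha\beta}+a\tilde t^{(s)}_{\alpha\beta}+b\tilde t^{(s-1)}_{\alpha\beta},\tilde t^{(m+p)}_{ij}]$ in terms of $[\tilde t^{(s)}_{\alpha\beta},\tilde t^{(m+p+1)}_{ij}+a\tilde t^{(m+p)}_{ij}+b\tilde t^{(m+p-1)}_{ij}]$ plus the quadratic term $\tilde t^{(s)}_{i\beta}\tilde t^{(m+p)}_{\alpha j}-\tilde t^{(m+p)}_{i\beta}\tilde t^{(s)}_{\alpha j}$ (tensored with $\hat t^{(m+q)}_{kl}$ on the right), and symmetrically for the second leg with the sign coming from $A=\tilde t\otimes 1-1\otimes\hat t$. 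Multiplying by $b^{-m}$ and summing over $m=1,\dots,n$, the terms $b^{-m}\cdot a\cdot(\cdots)^{(m+p)}$ together with the "$-A^s$" piece produced on the left, and the pairs $b^{-m}\cdot(\cdots)^{(m+p+1)}$ against $b^{-(m+1)}\cdot b\cdot(\cdots)^{(m+1+p)}$ at the next index, telescope: one is left with the top boundary term at $m=n$ carrying $b^{-n}$, namely $[\tilde t^{(s)}_{\alpha\beta}\otimes 1,\tilde t^{(n+p+1)}_{ij}\otimes\hat t^{(n+q)}_{kl}]$ minus its companion from the second leg, and the bottom boundary term at $m=1$ carrying $b^{-1}\cdot b=1$, namely $[\tilde t^{(s)}_{\alpha\beta}\otimes 1,\tilde t^{(p)}_{ij}\otimes\hat t^{(q+1)}_{kl}]$ minus $[1\otimes\hat t^{(s)}_{\epsilon\kappa},\tilde t^{(p+1)}_{ij}\otimes\hat t^{(q)}_{kl}]$; the residual non-telescoping commutator collects into $[A^s_{\alpha\beta\epsilon\kappa},Z^{n-1,p+1,q}_{ijkl}+Z^{n-1,p,q+1}_{ijkl}]$, where the reduction $n\mapsto n-1$ arises because after the index shift the $m=n$ contribution has already been split off. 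Finally, the quadratic contributions from both legs sum, by definition of $Z^{n,p,q}$, to $(\tilde t^{(s)}_{i\beta}\otimes 1)Z^{n,p,q}_{\alpha jkl}-Z^{n,p,q}_{i\beta kl}(\tilde t^{(s)}_{\alpha j}\otimes 1)+Z^{n,p,q}_{ijk\kappa}(1\otimes\hat t^{(s)}_{\epsilon l})-(1\otimes\hat t^{(s)}_{k\kappa})Z^{n,p,q}_{ij\epsilon l}$. This is the asserted identity. For the special case $(\alpha,\beta,\epsilon,\kappa)=(i,j,k,l)$, the four quadratic terms become $(\tilde t^{(s)}_{ij}\otimes 1)Z^{n,p,q}_{ijkl}-Z^{n,p,q}_{ijkl}(\tilde t^{(s)}_{ij}\otimes 1)+Z^{n,p,q}_{ijkl}(1\otimes\hat t^{(s)}_{kl})-(1\otimes\hat t^{(s)}_{kl})Z^{n,p,q}_{ijkl}=[\,\tilde t^{(s)}_{ij}\otimes 1-1\otimes\hat t^{(s)}_{kl},\,Z^{n,p,q}_{ijkl}\,]=[A^s_{ijkl},Z^{n,p,q}_{ijkl}]$, which when moved to the left side turns $A^{s+1}_{ijkl}+bA^{s-1}_{ijkl}$ into $A^{s+1}_{ijkl}-A^s_{ijkl}+bA^{s-1}_{ijkl}$ and removes the two sandwich lines, yielding the displayed formula.
\end{proof}
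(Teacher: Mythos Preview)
Your proposal is correct and follows the same route the paper (implicitly) takes: the paper states this corollary without proof, as an immediate consequence of the defining relations~\eqref{eqn:GenYangian_1} in the constant-coefficient case, exactly as in the preceding lemma whose proof reads ``Immediately follows from commutation relations defining Yangian.'' Your explicit term-by-term expansion, reindexing, and separation into boundary, $Z^{n-1}$-remainder, and quadratic parts is a faithful and correct unpacking of that one-line justification; the only cosmetic remark is that the $a$-terms do not ``telescope'' but simply cancel pairwise (the $+a\tilde t^{(s)}$ introduced to match~\eqref{eqn:GenYangian_1} against the $+a\tilde t^{(m+p)}$ on the other side), while the genuine boundary extraction comes from splitting $Z^{n,\cdot,\cdot}=Z^{n-1,\cdot,\cdot}+b^{-n}(\text{top term})$ and the $m'=0$ term after reindexing.
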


\section{Case of Dickson polynomials}\label{sec:Chebyshev}
Let us look at the case of Dickson polynomials, that is $a_n=\alpha,n\in\mathbb{N}\cup\{0\}$, $b_n=\beta,n\in\mathbb{N}$, $\beta\neq 0$. Dickson polynomials $\{d_n^k\}_{n\geq 0}$ of ($k+1$) type are defined by the system of recurrent relations (\cite{WY2012,Stoll2008})
\[
\begin{array}{cc}
    d_0^k &= 2-k,\,\,\,\, d_k^1=x,\,\,\,\ k\in\mathbb{N}\cup {0} \\
    d_{n+1}^k &= x d_n -\beta d_{n-1}^k
\end{array}
\]
\begin{rem}
 In the case of $k=0$ or $k=1$ with $\beta=1$ they correspond to the Chebyshev polynomials of first (corr., second) type.   
\end{rem}
 We can assume without loss of generality that $k=1$ i.e. we consider Dickson polynomials of  the second type. Indeed, since relations  \eqref{eqn:GenYangian_1} are invariant with respect to multiplication on the nonzero constant, Dickson polynomials of the other types will lead to the same algebra unless $k=2$. The case $k=2$ will be considered elsewhere.

We will use the notation $OY_{\beta}(\gl_n)$ for the corresponding Yangian.
Then $L_{\cdot}^{a}=\alpha Id$, $L^{s(b)}=\beta Id$ and system \eqref{eqn:GenYangian_2} becomes
\[
\left(u+\frac{\beta}{u}-v-\frac{\beta}{v}\right)[\tilde{T}_{ij}(u),\tilde{T}_{kl}(v)]=\tilde{T}_{kj}(u)\tilde{T}_{il}(v)-\tilde{T}_{kj}(v)\tilde{T}_{il}(u).i,j,k,l=1,\ldots,n
\]
which is nothing else but the inhomogeneous RTT relation 
\begin{equation}\label{eqn:RTTChebyshev}
R^{\beta}(u,v)\tilde{T}_1(u)\tilde{T}_2(v)=\tilde{T}_2(v)\tilde{T}_1(u)R^{\beta}(u,v)
\end{equation}
with inhomogeneous $R^{\beta}$ matrix:
\[
R^{\beta}(u,v)=I-\frac{1}{u+\frac{\beta}{u}-v-\frac{\beta}{v}}P.
\]
The corresponding map $W:OY_{\beta}(\gl_n)\mapsto Y(\gl_n) $  will have the form:
\begin{equation}\label{eqn:W_Cheb}
W(\tilde{t})_{ij}(u):=\frac{1}{2\pi i}\int\limits_{|z|=1} \frac{ u^2}{ u^2-u (z-\alpha) +\beta}t_{ij}(z)\frac{dz}{z}=\frac{u}{u+\frac{\beta}{u}+\alpha}t_{ij}(u+\frac{\beta}{u}+\alpha),i,j=1\ldots,n.
\end{equation}
\begin{rem}
Notice that from the RTT relation follows that $R^{\beta}$ is a solution of general Yang-Baxter equation:
\[
R_{12}(u,v)R_{13}(u,w)R_{23}(v,w)=R_{23}(v,w)R_{13}(u,w)R_{12}(u,v).
\]  
\end{rem}
In this case we have the following proposition
\begin{prop}\label{prop:CommutationRelations}
The system of relations \eqref{eqn:RTTChebyshev} is equivalent to the system
\[
[\tilde{t}_{ij}^{(r)},\tilde{t}^{(s)}_{kl}]=\sum\limits_{p,m=0}^{m-s\leq p\leq r-m-1}\beta^m\left(\tilde{t}^{(r-p-m-1)}_{kj}\tilde{t}^{(p+s-m)}_{il}-\tilde{t}^{(p+s-m)}_{kj}\tilde{t}^{(r-p-m-1)}_{il}\right).
\]
\end{prop}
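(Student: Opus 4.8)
The plan is to show the equivalence by expanding the generating-function identity \eqref{eqn:RTTChebyshev} in powers of $u^{-1}$ and $v^{-1}$ and matching coefficients. Recall that \eqref{eqn:RTTChebyshev} for the $\gl_n$-entries is exactly
\[
\left(u+\frac{\beta}{u}-v-\frac{\beta}{v}\right)[\tilde{T}_{ij}(u),\tilde{T}_{kl}(v)]=\tilde{T}_{kj}(u)\tilde{T}_{il}(v)-\tilde{T}_{kj}(v)\tilde{T}_{il}(u).
\]
First I would divide both sides by $u - v$ so the left factor becomes $\frac{(u^2+\beta)v - (v^2+\beta)u}{uv(u-v)} = \frac{uv - \beta}{uv}$ (after factoring out $u-v$ from the numerator $uv(u-v) - \beta(u-v)$), i.e. the scalar $1 - \beta/(uv)$. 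On the right, the usual Yangian manipulation applies: $\frac{\tilde T_{kj}(u)\tilde T_{il}(v) - \tilde T_{kj}(v)\tilde T_{il}(u)}{u-v}$ expands as a genuine power series in $u^{-1},v^{-1}$. So the relation becomes
\[
\left(1-\frac{\beta}{uv}\right)[\tilde{T}_{ij}(u),\tilde{T}_{kl}(v)]=\frac{\tilde{T}_{kj}(u)\tilde{T}_{il}(v)-\tilde{T}_{kj}(v)\tilde{T}_{il}(u)}{u-v}.
\]

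Next I would invert the scalar factor: $\left(1-\beta/(uv)\right)^{-1} = \sum_{m\ge 0}\beta^m u^{-m}v^{-m}$, which is a well-defined formal power series. Multiplying through and extracting the coefficient of $u^{-r}v^{-s}$ on the left gives $[\tilde t_{ij}^{(r)},\tilde t_{kl}^{(s)}]$, while on the right one gets a double sum: the coefficient of $u^{-r}v^{-s}$ in $\sum_{m\ge0}\beta^m u^{-m}v^{-m}\cdot\sum_{p\ge0} u^{-p}\bigl(\text{coefficients from }\tfrac{\tilde T(u)\tilde T(v)-\dots}{u-v}\bigr)$. Concretely, $\frac{\tilde T_{kj}(u)\tilde T_{il}(v)-\tilde T_{kj}(v)\tilde T_{il}(u)}{u-v}$ has a standard expansion; its coefficient of $u^{-a}v^{-b}$ is $\sum_{c\ge 1}\bigl(\tilde t_{kj}^{(a-c)}\tilde t_{il}^{(b+c-1)} - \tilde t_{kj}^{(b-c)}\tilde t_{il}^{(a+c-1)}\bigr)$ (with the convention $\tilde t^{(0)}=\delta$, $\tilde t^{(<0)}=0$). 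Combining the two geometric/telescoping sums and re-indexing ($a = r-m$, $b = s-m$, then shifting by the inner summation variable) should produce exactly the stated range $m - s \le p \le r-m-1$ and the summand $\beta^m(\tilde t_{kj}^{(r-p-m-1)}\tilde t_{il}^{(p+s-m)} - \tilde t_{kj}^{(p+s-m)}\tilde t_{il}^{(r-p-m-1)})$. For the converse direction, one reassembles these coefficient identities into the generating-function form, which is purely formal once the ranges are handled correctly.

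The main obstacle I anticipate is bookkeeping the summation ranges: the interplay between the geometric series $\sum_m \beta^m u^{-m}v^{-m}$ and the $(u-v)^{-1}$ expansion produces nested sums whose index constraints must be carefully intersected, and one must verify that the negative-index and zero-index conventions ($\tilde t^{(0)}=\delta_{ij}$, $\tilde t^{(-1)}=0$) make the boundary terms come out right — in particular that the apparent contributions with $\tilde t^{(0)}$ either vanish or reproduce the correct $\delta$-terms, and that the upper/lower limits $m-s\le p\le r-m-1$ emerge rather than something off by one. A secondary care point is that the division by $u-v$ is legitimate: the numerator $\tilde T_{kj}(u)\tilde T_{il}(v) - \tilde T_{kj}(v)\tilde T_{il}(u)$ vanishes on the diagonal $u=v$ as a formal series, so the quotient is a well-defined element of the ring of formal power series in $u^{-1},v^{-1}$ with coefficients in $OY_\beta(\gl_n)\otimes OY_\beta(\gl_n)$ — this is the same observation used implicitly in the proof of Theorem~\ref{thm:ImageYangian}, so it can be invoked rather than reproved. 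Once these conventions are pinned down, both implications follow by comparing coefficients, so the proof is essentially a careful but routine generating-function computation.
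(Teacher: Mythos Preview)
Your approach is correct and is essentially the same as the paper's: the paper multiplies both sides of \eqref{eqn:RTTChebyshev} by the single formal series $\sum_{p,m\ge 0}\beta^m u^{-p-m-1}v^{p-m}$ and equates coefficients, whereas you factor this series as $(u-v)^{-1}\cdot(1-\beta/(uv))^{-1}=\bigl(\sum_{p\ge 0}u^{-p-1}v^{p}\bigr)\bigl(\sum_{m\ge 0}\beta^m(uv)^{-m}\bigr)$ and apply the two factors in sequence. The bookkeeping you flag as the main obstacle is exactly what the paper leaves implicit; tracking $a=r-p-m-1\ge 0$ and $b=p+s-m\ge 0$ gives the stated range $m-s\le p\le r-m-1$ directly.
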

\begin{proof}
Multiplication of both sides of the system \eqref{eqn:RTTChebyshev} on formal series
$\sum\limits_{p,m=0}^{\infty}\beta^m u^{-p-m-1}v^{p-m}$ and equating coefficients gives the result.
\end{proof}
Similarly to the Yangian we can define the evaluation homomorphism:
\begin{prop}\label{prop:SurjectiveHomomorphism}
\begin{itemize}
    \item[(i)] 
    The map defined by
    \begin{equation}\label{eqn:DefSurHom_1}
    \tilde{T}_{ij} (u)\mapsto \delta_{ij}+ E_{ij}\frac{1}{u+\frac{\beta}{u}}   
    \end{equation}
    is a surjective homomorphism $OY_{\beta}(\gl_n)\to U(\gl_n)$. The map defined by
    \begin{equation}\label{eqn:DefSurHom_2}
    E_{ij}\mapsto \tilde{t}_{ij}^{(1)}
    \end{equation}
    is an embedding $U(\gl_n)\to OY_{\beta}(\gl_n)$.
\item[(ii)]   The map defined by
\begin{equation}\label{eqn:Homomorf_1}
    \tilde{T}_{ij} (u)\mapsto T_{ij}(u+\frac{\beta}{u})      \end{equation}
    is a  homomorphism $OY_{\beta}(\gl_n)[u^{-1}]\to Y(\gl_n)[u^{-1}]$.
    \item[(iii)] Let
 \[
 f(u)=1+\sum\limits_{k=1}^{\infty}\frac{f_k}{u^k}\in \mathbb{C}[[u^{-1}]],
 \]
 and $B$ any invertible $n\times n$ matrix. Then we have the following automorphisms of $OY_{\beta}(\gl_n)$:
 \begin{itemize}
 \item[(a)]
 \begin{equation}
\tilde{T}(u)\mapsto f(u)\tilde{T}(u).
\end{equation}

\item[(b)]
\begin{equation}
    \tilde{T}(u)\mapsto B \tilde{T}(u) B^{-1}.
\end{equation}
% {\red
%\item[(d)]
% \begin{equation}
%     \tilde{T}(u)\mapsto \tilde{T}\left(\frac{\beta}{u}\right)
% \end{equation}
% }
\end{itemize}
    \end{itemize}
\end{prop}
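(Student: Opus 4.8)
The plan is to verify each of the three parts of Proposition~\ref{prop:SurjectiveHomomorphism} by reducing everything to the inhomogeneous RTT relation \eqref{eqn:RTTChebyshev} together with the known facts about $Y(\gl_n)$ and $U(\gl_n)$. The unifying observation is that \eqref{eqn:RTTChebyshev} has the schematic form $R^{\beta}(u,v)\tilde T_1(u)\tilde T_2(v)=\tilde T_2(v)\tilde T_1(u)R^{\beta}(u,v)$ with $R^{\beta}(u,v)=I-\frac{1}{g(u)-g(v)}P$ where $g(u)=u+\frac{\beta}{u}$ (absorbing $\alpha$ into a shift of variables, or noting $\alpha$ only enters through $g$); this is formally the \emph{same} relation as the ordinary RTT relation for $Y(\gl_n)$ but with the spectral parameter $u$ replaced by $g(u)$. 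So the strategy for (ii) and most of (iii) is: take a known morphism/automorphism of $Y(\gl_n)$ built from its RTT presentation, and substitute $u\mapsto g(u)$.

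For part (ii): I would take the generating matrix $T(u)$ of $Y(\gl_n)$, which satisfies $R(u-v)T_1(u)T_2(v)=T_2(v)T_1(u)R(u-v)$ with $R(u-v)=I-\frac{1}{u-v}P$, and substitute $u\mapsto g(u)$, $v\mapsto g(v)$. Since $g(u)-g(v)=u+\frac{\beta}{u}-v-\frac{\beta}{v}$, the matrix $R(g(u)-g(v))$ is exactly $R^{\beta}(u,v)$, so $T(g(u))$ satisfies \eqref{eqn:RTTChebyshev}; hence $\tilde T_{ij}(u)\mapsto T_{ij}(g(u))$ respects the defining relations of $OY_{\beta}(\gl_n)$ and extends to an algebra homomorphism after adjoining $u^{-1}$ (the adjunction of $u^{-1}$ is needed because $g(u)=u+\beta u^{-1}$ is a Laurent, not a power, series in $u^{-1}$, so the image lands in $Y(\gl_n)[u^{-1}]$ — this is the reason the statement is phrased with $[u^{-1}]$ on both sides). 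One must also check the image of the constant term: $T(g(u))=I+\sum_{r\ge1}T^{(r)}g(u)^{-r}$ still has constant term $I$, matching $\tilde T_{ij}(0$-part$)=\delta_{ij}$.

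For part (i): the map \eqref{eqn:DefSurHom_1}, $\tilde T(u)\mapsto I+E\frac{1}{g(u)}$ where $E=(E_{ij})$, is the composition of the homomorphism from (ii) with the standard evaluation homomorphism $Y(\gl_n)\to U(\gl_n)$, $T(w)\mapsto I+Ew^{-1}$, under $w=g(u)$; alternatively one checks directly that $I+E\frac{1}{g(u)}$ satisfies \eqref{eqn:RTTChebyshev}, which is precisely the statement that $E$ satisfies the $\gl_n$ bracket $[E_{ij},E_{kl}]=\delta_{kj}E_{il}-\delta_{il}E_{kj}$ — this is the classical computation behind the $Y(\gl_n)\to U(\gl_n)$ evaluation map and is unchanged by the substitution $w=g(u)$. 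Surjectivity is clear since the $E_{ij}$ are hit. For the embedding \eqref{eqn:DefSurHom_2}, $E_{ij}\mapsto \tilde t_{ij}^{(1)}$, I would note that composing with the evaluation map \eqref{eqn:DefSurHom_1} sends $E_{ij}$ to the coefficient of $u^{-1}$ in $\delta_{ij}+E_{ij}(u+\beta u^{-1})^{-1}$, which is $E_{ij}$; hence \eqref{eqn:DefSurHom_2} is a one-sided inverse of a surjection and is therefore injective, provided one first checks it is a homomorphism, i.e. that the $\tilde t_{ij}^{(1)}$ satisfy the $\gl_n$ relations inside $OY_{\beta}(\gl_n)$ — this follows by extracting the lowest-degree ($u^{-1}v^{-1}$) coefficient from \eqref{eqn:RTTChebyshev}, equivalently by setting $r=s=1$ and reading off the $\beta$-independent part of Proposition~\ref{prop:CommutationRelations}.

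For part (iii): both (a) and (b) are pulled back from the corresponding well-known automorphisms of $Y(\gl_n)$. For (b), $\tilde T(u)\mapsto B\tilde T(u)B^{-1}$: since $R^{\beta}(u,v)$ is built from $I$ and $P$, it commutes with $B\otimes B$, so conjugating \eqref{eqn:RTTChebyshev} by $B\otimes B$ preserves it; this needs no change of variable. For (a), $\tilde T(u)\mapsto f(u)\tilde T(u)$ with $f(u)=1+\sum_{k\ge1}f_ku^{-k}$: multiplying $\tilde T_1(u)$ by the scalar $f(u)$ and $\tilde T_2(v)$ by $f(v)$ multiplies both sides of \eqref{eqn:RTTChebyshev} by the same central scalar $f(u)f(v)$, so the relation is preserved; one checks the map is bijective because $f$ is invertible in $\mathbb{C}[[u^{-1}]]$ and the constant term is still $I$. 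I expect the only genuinely delicate point to be bookkeeping around the substitution $w=g(u)=u+\beta u^{-1}$: one must be careful that expansions are taken consistently as Laurent series in $u^{-1}$ (so that $g(u)^{-r}$ makes sense and the maps are well defined as stated, with the $[u^{-1}]$ localization in (ii) and (iii) where $g$ or $f$ forces it), and that no relation of $OY_{\beta}(\gl_n)$ beyond \eqref{eqn:RTTChebyshev} needs separate verification — which is guaranteed because \eqref{eqn:RTTChebyshev} is, by the discussion preceding the proposition, equivalent to the full defining system \eqref{eqn:GenYangian_1} in the Dickson case.
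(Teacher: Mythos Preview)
Your proposal is correct and follows essentially the same route as the paper. The paper verifies \eqref{eqn:RTTChebyshev} for the evaluation map exactly as you suggest (reducing it to the $\gl_n$ commutation relations), obtains the $\gl_n$ relations among the $\tilde t_{ij}^{(1)}$ by setting $r=s=1$ in Proposition~\ref{prop:CommutationRelations}, and for (iii) simply cites the corresponding statement for $Y(\gl_n)$ in Molev's book, which is precisely your ``$R^{\beta}$ is built from $I$ and $P$'' and ``scalar factor commutes through'' observations.

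The one mild difference is in (ii): the paper deduces it from formula~\eqref{eqn:W_Cheb}, i.e.\ from the already-proved isomorphism $W$ of Theorem~\ref{thm:ImageYangian} specialized to the Dickson case (the map in (ii) is $W$ up to the scalar prefactor $\frac{u}{u+\beta/u+\alpha}$, which is harmless by (iii)(a)). Your direct argument---substituting $u\mapsto g(u)$, $v\mapsto g(v)$ into the ordinary RTT relation and observing $R(g(u)-g(v))=R^{\beta}(u,v)$---is a cleaner stand-alone verification that does not rely on Theorem~\ref{thm:ImageYangian}; the paper's route has the advantage of making explicit that (ii) is (essentially) the isomorphism $W$ in this special case. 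Your added remark that composing \eqref{eqn:DefSurHom_2} with \eqref{eqn:DefSurHom_1} gives the identity, hence injectivity of the embedding, fills in a detail the paper leaves implicit.
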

\begin{proof}
By the definition of $OY_{\beta}(\gl_n)$ we need to check that
    \[
\begin{aligned}
    \left(u+\frac{\beta}{u}-v-\frac{\beta}{v}\right)&[E_{ij},E_{kl}](u+\frac{\beta}{u})^{-1}(v+\frac{\beta}{v})^{-1}=\\
    &(\delta_{kj}+ \frac{E_{kj}}{u+\frac{\beta}{u}})(\delta_{il}+ \frac{E_{il}}{v+\frac{\beta}{v}})-(\delta_{kj}+ \frac{E_{kj}}{v+\frac{\beta}{v}})(\delta_{il}+ \frac{E_{il}}{u+\frac{\beta}{u}}).
\end{aligned}
\]
This follows from commutation relations in $\gl_n$. Now we put $r=s=1$  in Proposition \ref{prop:CommutationRelations} to get
\[
[\tilde{t}_{ij}^{(1)},\tilde{t}^{(1)}_{kl}]=\delta_{kj}\tilde{t}_{il}^{(1)}-\delta_{il} \tilde{t}_{kj}^{(1)},
\]
and the statement (i) follows. Statement (ii) follows from formula \eqref{eqn:W_Cheb}, while (iii) follows from  \cite[Proposition 1.3.1]{Molevbook}.

%Similar to proposition 1.1.3 in \cite{Molevbook}.
\end{proof}
\begin{rem}
    Notice that formally speaking the RTT relation \eqref{eqn:RTTChebyshev}  is invariant with respect to transformation
\[
T(u)\mapsto T(\frac{\beta}{u})
\]
Of course, we cannot define automorphism of $OY_{\beta}(\gl_n)$ because $T(\frac{\beta}{u})$ is a series in positive powers of $u$. This leads to necessity to define  an algebra $\widehat{OY}_{\beta}(\gl_n)$ with relation \eqref{eqn:RTTChebyshev} where $T\in \widehat{OY}_{\beta}(\gl_n)[u,u^{-1}]$ is a Laurent series in $u$. The algebra is not trivial since we have homomorphism
\[
H:\widehat{OY}_{\beta}(\gl_n)\to OY_{\beta}(\gl_n), \,\, \hat{t}_{ij}^{(k)}\to 
\left\{
\begin{array}{cc}
   t_{ij}^{(k)}  & k\geq 0 \\
    0 & k<0
\end{array}
\right.
\]
Further investigation of this algebra will be done elsewhere.
\end{rem}

\begin{prop}
Let  $c\in\mathbb{C}$. We have the following embedding of $OY_{\beta}(\gl_n)$ into its completion:

\begin{equation}
\tilde{T}(u)\mapsto \tilde{T}(\phi_c(u)),
\end{equation}
where $\phi_c\in \mathbb{C}[[u^{-1}]]$ is a holomorphic family of functions defined by the equation
\begin{equation}\label{eqn:Phi_c_def}
\phi_c(u)+\frac{\beta}{\phi_c(u)}=u+\frac{\beta}{u}+c, 
\end{equation}
which satisfies the group property
\[
\phi_{c+d}=\phi_c\circ\phi_d, c,d\in \mathbb{C}.
\]
\end{prop}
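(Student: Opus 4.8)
The plan is to establish three things: (1) that the functional equation \eqref{eqn:Phi_c_def} has a well-defined solution $\phi_c \in \mathbb{C}[[u^{-1}]]$ of the form $\phi_c(u) = u + c + O(u^{-1})$; (2) that the substitution $\tilde T(u) \mapsto \tilde T(\phi_c(u))$ is an algebra homomorphism into the completion of $OY_\beta(\gl_n)$; and (3) that the group property $\phi_{c+d} = \phi_c \circ \phi_d$ holds. The second point is essentially immediate from the form of the RTT relation \eqref{eqn:RTTChebyshev}, since the $R$-matrix $R^\beta(u,v)$ depends on $u,v$ only through the combination $u + \tfrac{\beta}{u}$ (and similarly in $v$), so replacing $u$ by $\phi_c(u)$ and $v$ by $\phi_c(v)$ changes $u + \tfrac\beta u - v - \tfrac\beta v$ to $\phi_c(u) + \tfrac{\beta}{\phi_c(u)} - \phi_c(v) - \tfrac{\beta}{\phi_c(v)} = (u + \tfrac\beta u + c) - (v + \tfrac\beta v + c) = u + \tfrac\beta u - v - \tfrac\beta v$ by \eqref{eqn:Phi_c_def}; thus the relation is literally preserved, and one only needs that the coefficients of $\tilde T(\phi_c(u))$, expanded as a series in $u^{-1}$, lie in a suitable completion of $OY_\beta(\gl_n)$, which follows because $\phi_c(u)^{-1}$ is a series in $u^{-1}$ with zero constant term so each matrix entry $\tilde T_{ij}(\phi_c(u))$ is a well-defined element of $(\widehat{OY}_\beta(\gl_n))[[u^{-1}]]$.

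For point (1), I would solve \eqref{eqn:Phi_c_def} by writing it as the quadratic $\phi_c^2 - (u + \tfrac\beta u + c)\phi_c + \beta = 0$ and choosing the root whose leading behavior is $\phi_c(u) = u + O(1)$; concretely
\[
\phi_c(u) = \frac{1}{2}\left(u + \frac{\beta}{u} + c\right) + \frac{1}{2}\sqrt{\left(u + \frac{\beta}{u} + c\right)^2 - 4\beta},
\]
where the square root is the branch expanded as $u\bigl(1 + O(u^{-1})\bigr)$. One checks that this expands as a formal Laurent series in $u^{-1}$ of the form $u + c + \sum_{k\ge 1} \gamma_k u^{-k}$, that it is holomorphic in $c$ (the coefficients $\gamma_k$ are polynomials in $c$ and $\beta$), and that it genuinely satisfies \eqref{eqn:Phi_c_def}. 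The injectivity ("embedding") claim then follows because the substitution $u \mapsto \phi_c(u)$ is invertible on $\mathbb{C}[[u^{-1}]]$ (its inverse being $\phi_{-c}$, once (3) is known, or directly by the implicit function theorem for formal series since the leading coefficient is $1$), so no information is lost.

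For point (3), the key observation is that \eqref{eqn:Phi_c_def} says precisely that the map $\psi\colon w \mapsto w + \tfrac\beta w$ intertwines $\phi_c$ with the translation $\tau_c\colon s \mapsto s + c$, i.e. $\psi \circ \phi_c = \tau_c \circ \psi$. Since translations compose additively, $\tau_{c+d} = \tau_c \circ \tau_d$, we get $\psi \circ \phi_c \circ \phi_d = \tau_c \circ \psi \circ \phi_d = \tau_c \circ \tau_d \circ \psi = \tau_{c+d} \circ \psi = \psi \circ \phi_{c+d}$, and cancelling $\psi$ on the left (legitimate because $\psi$ is "injective" on the relevant class of series — a Laurent series $f = u + O(1)$ is determined by $f + \beta/f$, as one sees by comparing coefficients inductively) yields $\phi_c \circ \phi_d = \phi_{c+d}$. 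I expect the main obstacle to be purely bookkeeping: making precise in which completion $\widehat{OY}_\beta(\gl_n)$ or $OY_\beta(\gl_n)[[u^{-1}]]$ the composite series live and verifying that composition of formal series and equating coefficients in the RTT relation are valid operations there — the algebra is straightforward but the formal-analytic setup needs to be stated carefully, essentially mirroring the completion already used implicitly in Proposition~\ref{prop:SurjectiveHomomorphism}(ii)--(iii).
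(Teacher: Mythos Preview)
Your proposal is correct and follows the same overall strategy as the paper: solve the quadratic for $\phi_c$ and deduce the group property from a uniqueness statement (your intertwining argument $\psi\circ\phi_c=\tau_c\circ\psi$ is exactly that uniqueness, since both $\phi_c\circ\phi_d$ and $\phi_{c+d}$ then satisfy $\psi\circ f=\tau_{c+d}\circ\psi$ in the class $f=u+O(1)$). Two points deserve comment.

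First, your write-up is more complete than the paper's: the paper's proof only establishes existence and uniqueness of $\phi_c$ and then asserts the group property, whereas you also justify why the substitution preserves the RTT relation \eqref{eqn:RTTChebyshev} and why the map is injective.

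Second, and more substantively, you select the branch $\phi_c(u)=u+c+O(u^{-1})$, while the paper's proof explicitly takes the minus sign in the quadratic formula and obtains the \emph{other} root $\phi_c(u)=\beta u^{-1}+O(u^{-2})\in u^{-1}\mathbb{C}[[u^{-1}]]$. Your choice is the one that actually makes the proposition work. With the paper's branch, $\phi_c(u)^{-1}$ has leading term $u/\beta$, so $\tilde T_{ij}(\phi_c(u))=\delta_{ij}+\sum_{r\ge1}\tilde t_{ij}^{(r)}\phi_c(u)^{-r}$ contains unbounded positive powers of $u$ with infinitely many generators contributing to each coefficient, and is not a well-defined formal series in any completion of $OY_\beta(\gl_n)$; for the same reason the formal composition $\phi_c\circ\phi_d$ fails to exist. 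So the paper's proof appears to contain a sign slip at this step, and your version repairs it. With your branch one has $\phi_c(u)^{-r}\in u^{-r}\mathbb{C}[[u^{-1}]]$, so each coefficient of $\tilde T(\phi_c(u))$ is a \emph{finite} linear combination of the $\tilde t_{ij}^{(s)}$ and no completion is actually needed --- the map is an honest automorphism of $OY_\beta(\gl_n)$, with inverse given by $\phi_{-c}$.
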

\begin{proof}

Let us show that the equation \eqref{eqn:Phi_c_def} has a unique solution in $\mathbb{C}[[u^{-1}]]$. We have that 
\begin{equation}
\phi_c(u)=\frac{u}{2}\left(1+\frac{c}{u}+\frac{\beta}{u^2}\pm\sqrt{(1+\frac{c}{u}+\frac{\beta}{u^2})^2}-\frac{4\beta}{u^2}\right).\label{eqn:Phi_Variants}
\end{equation}
We search for the series $\{a_k\}_{k=2}^{\infty}$ such that
\[
\left(1+\frac{c}{u}+\sum\limits_{k=2}^{\infty}\frac{a_k}{u^k}\right)^2=\left(1+\frac{c}{u}+\frac{\beta}{u^2}\right)^2-\frac{4\beta}{u^2}.
\]
This equation is equivalent to the system
\begin{equation}
    \left\{
\begin{array}{cc}
     &a_2 = -\beta ,a_3 =2\beta c, a_4 = -2\beta c^2\\
     &a_n = -c a_{n-1}-\frac{1}{2}\sum\limits_{m,l\geq 2, m+l=n} a_m a_l,n\geq 5,
\end{array}
    \right.
\end{equation}
which has a unique solution. Thus, choosing minus sign in \eqref{eqn:Phi_Variants} we get that
\begin{equation}
   \phi_c(u)= \frac{u}{2}\left(\frac{\beta}{u^2}-\sum\limits_{k=2}^{\infty}\frac{a_k}{u^k}\right)
\end{equation}
is a unique solution of the equation \eqref{eqn:Phi_c_def} in $\mathbb{C}[[u^{-1}]]$. The group property follows from 
 the uniqueness.

    % {\red The (d) follows from the invariance of RTT relation with respect to the map $u\mapsto \frac{\beta}{u}$.
    % \\
    % The problem with the definition of $\tilde{T}\left(\frac{\beta}{u}\right)$: This is the series in $u$ instead of $\frac{1}{u}$.
    % }
\end{proof}
% {\red
% \begin{rem}
% We can observe that informally the map $\tilde{T}(u)\mapsto \tilde{T}(\frac{\beta}{u})$ should be automorphism of $OY_{\beta}(gl_N)$ since it conserves RTT relation. The problem is that $\tilde{T}(\frac{\beta}{u})$ is a series in $u$ instead of $\frac{1}{u}$. The natural idea at this moment is to define $\tilde{T}(u)$ as a Laurent series. The problem is that we either lose the realisation or we need to prove that formula  \eqref{eqn:Yangian_0} holds for negative  integer $r$ and $s$? To do that we need to define inverse of the matrix $E$, which will not necessarily belong to the algebra. Indeed, it is enough to define $(\det E)^{-1}$ which possibly exist in some completion of $OY_{\beta}(gl_N)$?
% \end{rem}
% }
\subsection{Quantum determinant}

\begin{defin} Let $R^{\beta}\in (End(\mathbb{C}^n))^{\otimes m}$ a function defined by
\begin{equation}
    R^{\beta}(u_1,u_2\ldots,u_m):=R^{\beta}_{m-1,m}\left(R^{\beta}_{m-2,m}R^{\beta}_{m-2,m-1}\right)\ldots \left(R^{\beta}_{1,m}\ldots R^{\beta}_{12}\right)
\end{equation}  
where we abbreviate $R^{\beta}_{ij}=R^{\beta}_{ij}(u_i,u_j)$.
\end{defin}
Then we have
\begin{prop}\label{prop:Qdet_principal}
We have the identity
\begin{equation}
    R^{\beta}(u_1,u_2\ldots,u_m)\tilde{T}_1(u_1)\ldots \tilde{T}_{m}(u_m)=\tilde{T}_{m}(u_m)\ldots \tilde{T}_1(u_1) R^{\beta}(u_1,u_2\ldots,u_m).
\end{equation}
\end{prop}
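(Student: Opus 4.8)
The plan is to prove the identity by induction on $m$, using the case $m=2$ (which is precisely the defining relation \eqref{eqn:RTTChebyshev} of $OY_\beta(\gl_n)$) as the base case, together with the Yang--Baxter equation for $R^\beta$ recorded in the remark after \eqref{eqn:W_Cheb}. This is the standard argument used to pass from the binary RTT relation to the ``fused'' relation for $m$ copies (cf.\ \cite[\S1.6]{Molevbook} for the classical Yangian), adapted to the inhomogeneous $R$-matrix $R^\beta(u,v)=I-\frac{1}{u+\beta/u-v-\beta/v}P$; the key structural points — the braid/Yang--Baxter relation and locality of the factors $R^\beta_{ij}$ — survive the substitution $u\mapsto u+\beta/u$ verbatim.

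First I would set up notation: write $R^\beta(u_1,\dots,u_m)$ in the telescoped form given in the definition, and observe that it factors as $R^\beta(u_1,\dots,u_m)=\bigl(R^\beta_{m-1,m}R^\beta_{m-2,m}\cdots R^\beta_{1,m}\bigr)\,R^\beta(u_1,\dots,u_{m-1})$, i.e.\ the ``new'' index $m$ is braided past all the old ones on the outside. Then, assuming the identity for $m-1$ copies, I would move $\tilde T_m(u_m)$ to the left through the product $\tilde T_1(u_1)\cdots\tilde T_{m-1}(u_{m-1})$ one factor at a time: each elementary move uses the $m=2$ relation $R^\beta_{j,m}(u_j,u_m)\tilde T_j(u_j)\tilde T_m(u_m)=\tilde T_m(u_m)\tilde T_j(u_j)R^\beta_{j,m}(u_j,u_m)$, while the $R^\beta_{j,m}$ that gets generated is commuted past the already-processed $\tilde T_k(u_k)$ with $k\neq j,m$ (they act on disjoint tensor factors, so they literally commute) and re-ordered against the other $R^\beta$'s via the Yang--Baxter equation. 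After $\tilde T_m(u_m)$ has been brought all the way to the left, the accumulated $R^\beta$-factors must be rearranged, using Yang--Baxter once more, into exactly the shape $R^\beta_{m-1,m}\cdots R^\beta_{1,m}$ sitting on the right, which together with the induction hypothesis for the remaining $m-1$ tensor legs reassembles $R^\beta(u_1,\dots,u_m)$ on the right-hand side.

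The main obstacle is bookkeeping: one must check that the Yang--Baxter relation $R^\beta_{12}R^\beta_{13}R^\beta_{23}=R^\beta_{23}R^\beta_{13}R^\beta_{12}$ (in the inhomogeneous variables) is exactly what is needed to reshuffle the intermediate products of $R^\beta$'s so that the telescoped expression closes up correctly, and that no spurious factors depending on $u_m$ alone appear — here the fact that $R^\beta(u,u)=I$ (the permutation coefficient $\frac{1}{u+\beta/u-u-\beta/u}$ never arises because distinct legs carry distinct spectral parameters) is used implicitly. Since $R^\beta(u,v)$ depends on $u,v$ only through $u+\beta/u$ and $v+\beta/v$, every one of these manipulations is formally identical to the homogeneous Yangian case after the change of variables $u\mapsto u+\beta/u$; alternatively, one could bypass the induction entirely by invoking Proposition~\ref{prop:SurjectiveHomomorphism}(ii): the homomorphism $\tilde T_{ij}(u)\mapsto T_{ij}(u+\beta/u)$ into (a completion of) $Y(\gl_n)$ is injective on the relevant subalgebra, and the desired identity is the image under this map of the well-known fused RTT relation $R(u_1,\dots,u_m)T_1(u_1)\cdots T_m(u_m)=T_m(u_m)\cdots T_1(u_1)R(u_1,\dots,u_m)$ for $Y(\gl_n)$. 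I would present the direct induction as the main proof and remark on the pullback argument as a sanity check.
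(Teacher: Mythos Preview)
Your proposal is correct and essentially coincides with the paper's approach: the paper's proof is simply a reference to \cite[Proposition~1.6.1]{Molevbook}, whose proof is precisely the induction on $m$ via the binary RTT relation and the Yang--Baxter equation that you sketch, and which carries over verbatim because $R^\beta(u,v)$ depends on $u,v$ only through the difference $(u+\beta/u)-(v+\beta/v)$. Your alternative pullback argument via Proposition~\ref{prop:SurjectiveHomomorphism}(ii) is a valid sanity check but is not needed.
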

\begin{proof}
    Proposition 1.6.1 in \cite{Molevbook}.
\end{proof}
Define
\[
A_m:(\mathbb{C}^n)^{\otimes m}\mapsto (\mathbb{C}^n)^{\otimes m}, A_m(e_{i_1}\otimes\ldots \otimes e_{i_m}):=\sum\limits_{p\in S_m} \sgn(p) e_{i_{p(1)}}\otimes\ldots \otimes e_{i_{p(m)}}.
\]
\begin{prop}
If $u_{i+1}=\phi_{-1}(u_i)$, for all $i=1,\ldots, m-1$ then
 $R^{\beta}(u_1,u_2\ldots,u_m)=A_m$.
\end{prop}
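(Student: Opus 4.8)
The plan is to reduce the assertion to the classical fusion identity for the rational $R$-matrix $R(x)=I-\frac{1}{x}P$ of $Y(\gl_n)$, via the change of spectral parameter $g(u):=u+\frac{\beta}{u}$. First I would note that, directly from the defining formula $R^{\beta}(u,v)=I-\bigl(g(u)-g(v)\bigr)^{-1}P$, for any two tensor factors one has $R^{\beta}_{ij}(u_i,u_j)=R_{ij}\bigl(g(u_i)-g(u_j)\bigr)$, where $R_{ij}(x)=I-\frac{1}{x}P_{ij}$ is the ordinary $R$-matrix acting in the $i$-th and $j$-th copies of $\mathbb{C}^n$. Since the ordered product defining $R^{\beta}(u_1,\dots,u_m)$ has exactly the same combinatorial shape as the ordered product $R(v_1,\dots,v_m)$ of $Y(\gl_n)$ built from the factors $R_{ij}(v_i-v_j)$ (the $R$-matrix $R(u_1,\dots,u_m)$ introduced earlier), substituting $v_i:=g(u_i)$ factor by factor yields the identity $R^{\beta}(u_1,\dots,u_m)=R(v_1,\dots,v_m)$.

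Next I would invoke the defining equation \eqref{eqn:Phi_c_def} of $\phi_c$ with $c=-1$, which says precisely that $g(\phi_{-1}(u))=g(u)-1$. Hence the hypothesis $u_{i+1}=\phi_{-1}(u_i)$ forces $v_{i+1}=v_i-1$ for all $i$, so that $v_i-v_j=j-i$ whenever $i<j$; in particular all these differences are nonzero integers, each $R_{ij}(v_i-v_j)=I-\frac{1}{j-i}P_{ij}$ is a constant matrix, and therefore so is $R^{\beta}(u_1,\dots,u_m)$. Finally, with consecutive differences $v_i-v_{i+1}=1$ the ordered product $R(v_1,\dots,v_m)$ equals the antisymmetrizer $A_m$ --- this is the classical fusion identity for the rational $R$-matrix recorded in \cite{Molevbook}; for $m=2$ it is just $R_{12}(1)=I-P=A_2$, which also pins down the normalization, the $A_m$ of the statement being $m!$ times the idempotent antisymmetrizer. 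Combining the three displays gives $R^{\beta}(u_1,\dots,u_m)=A_m$.

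I do not expect a genuine obstacle here: the mathematical content is the classical fusion fact, which I would cite rather than reprove. The only points that need care are bookkeeping ones --- checking, index by index in the first step, that the two ordered products really do coincide under $u_i\mapsto v_i=g(u_i)$, and making sure the normalization of $A_m$ in the statement matches the form in which the fusion identity holds (no $\tfrac{1}{m!}$). Should a self-contained proof be preferred, the substantive work would be an induction on $m$ establishing $R(v_1,\dots,v_m)=A_m$ from the recursive description of $A_m$ together with the Yang--Baxter equation and unitarity of $R$; but given the paper's repeated reliance on \cite{Molevbook}, the citation route is the natural one.
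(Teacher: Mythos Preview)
Your argument is correct and is essentially the paper's own proof: the paper simply cites Proposition~1.6.2 of \cite{Molevbook}, and what you have written is exactly the spelling-out of why that citation applies, namely that $R^{\beta}_{ij}(u_i,u_j)=R_{ij}(g(u_i)-g(u_j))$ with $g(u)=u+\beta/u$, and that the hypothesis $u_{i+1}=\phi_{-1}(u_i)$ translates via \eqref{eqn:Phi_c_def} into $g(u_{i+1})=g(u_i)-1$, reducing the claim to the classical fusion identity.
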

\begin{proof}
    The result follows from Proposition 1.6.2 in \cite{Molevbook}.
\end{proof}
\begin{cor}
    We have 
    \begin{equation}\label{eqn:Qdet_1}
    A_m\tilde{T}_1(u)\tilde{T}_2(\phi_{-1}(u))\ldots  \tilde{T}_{m}(\phi_{-(m-1)}(u))=\tilde{T}_{m}(\phi_{-(m-1)}(u))\ldots \tilde{T}_2(\phi_{-1}(u))\tilde{T}_1(u) A_m.
    \end{equation}
\end{cor}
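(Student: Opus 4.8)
The plan is to derive \eqref{eqn:Qdet_1} as a direct specialization of Proposition~\ref{prop:Qdet_principal} together with the preceding proposition identifying $R^\beta$ with the antisymmetrizer. First I would invoke Proposition~\ref{prop:Qdet_principal} with the specific choice of spectral parameters $u_1=u$, $u_2=\phi_{-1}(u)$, \dots, $u_m=\phi_{-(m-1)}(u)$, i.e. $u_{i+1}=\phi_{-1}(u_i)$ for $i=1,\dots,m-1$. Here I use the group property $\phi_{c+d}=\phi_c\circ\phi_d$ established above, so that $u_{i+1}=\phi_{-i}(u)$ is consistent: applying $\phi_{-1}$ repeatedly $i$ times to $u$ gives $\phi_{-i}(u)$. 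With this substitution the left-hand side of the identity in Proposition~\ref{prop:Qdet_principal} becomes $R^\beta(u,\phi_{-1}(u),\dots,\phi_{-(m-1)}(u))\,\tilde T_1(u)\tilde T_2(\phi_{-1}(u))\cdots\tilde T_m(\phi_{-(m-1)}(u))$ and similarly for the right-hand side.

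Next I would apply the immediately preceding proposition, which asserts that when $u_{i+1}=\phi_{-1}(u_i)$ for all $i$, one has $R^\beta(u_1,\dots,u_m)=A_m$. Substituting $R^\beta(u,\phi_{-1}(u),\dots,\phi_{-(m-1)}(u))=A_m$ on both sides of the specialized identity from Proposition~\ref{prop:Qdet_principal} yields exactly \eqref{eqn:Qdet_1}. There is essentially nothing further to check: the corollary is a two-line consequence of the two results stated just above it.

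The only point requiring a modicum of care is that substituting specific series in $u^{-1}$ into the spectral parameters of an identity originally formulated for generic (formal or complex) parameters is legitimate. This is fine here because all the relevant expressions are power series in the appropriate variables and $\phi_c(u)\in\mathbb{C}[[u^{-1}]]$, so each $\tilde T_i(\phi_{-(i-1)}(u))$ is a well-defined element of the completion, and the $R$-matrix entries, being of the form $I-\tfrac{1}{\,\cdot\,}P$, also specialize to well-defined series (the argument $u_i+\beta/u_i-u_j-\beta/u_j$ remains an invertible series in $u^{-1}$ whenever $i\neq j$ and the indices are distinct). I do not anticipate a genuine obstacle; the mild bookkeeping around the iterated composition $\phi_{-1}\circ\cdots\circ\phi_{-1}=\phi_{-(m-1)}$ via the group property is the most delicate ingredient, and it has already been recorded.

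Thus the proof is: specialize Proposition~\ref{prop:Qdet_principal} at $u_{i+1}=\phi_{-1}(u_i)$, then invoke the proposition identifying $R^\beta$ with $A_m$ under exactly that hypothesis, and read off \eqref{eqn:Qdet_1}.
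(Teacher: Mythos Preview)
Your proposal is correct and matches the paper's approach exactly: the corollary is stated without proof in the paper, being an immediate consequence of specializing Proposition~\ref{prop:Qdet_principal} at $u_{i+1}=\phi_{-1}(u_i)$ and then invoking the preceding proposition that $R^{\beta}(u_1,\dots,u_m)=A_m$ under that hypothesis. Your added remarks on the group property and well-definedness of the substitution are appropriate caveats but not required by the paper.
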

\begin{defin}
    The quantum determinant of the matrix $\tilde{T}(u)$ with the coefficients in $OY_{\beta}(\gl_n)$ is the formal series
    \[
    qdet\,\tilde{T}(u)=1+ d_1 u^{-1}+ d_2 u^{-2}+\ldots
    \]
    such that the element \eqref{eqn:Qdet_1} with $m=n$ is equal $A_n qdet\,\tilde{T}(u)$.
\end{defin}
\begin{prop}
    For any permutation $\tau\in \mathcal{S}_{n}$ we have
    \begin{eqnarray}
     qdet\,\tilde{T}(u) &= sgn(\tau)\sum\limits_{\sigma\in \mathcal{S}_{n}} sgn(\sigma) \tilde{t}_{\sigma(1),\tau(1)}(u) \tilde{t}_{\sigma(2),\tau(2)}(\phi_{-1}(u))\ldots \tilde{t}_{\sigma(n),\tau(n)}(\phi_{-(n-1)}(u))  \nonumber\\
     &= sgn(\tau)\sum\limits_{\sigma\in \mathcal{S}_{n}} sgn(\sigma) \tilde{t}_{\tau(1),\sigma(1)}(u) \tilde{t}_{\tau(2),\sigma(2)}(\phi_{-1}(u))\ldots \tilde{t}_{\tau(n),\sigma(n)}(\phi_{-(n-1)}(u)).\nonumber
    \end{eqnarray}
        
\end{prop}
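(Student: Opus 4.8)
The plan is to mimic the classical proof that the quantum determinant of a Yangian-type matrix is antisymmetric in its rows (and columns), using the antisymmetrizer identity \eqref{eqn:Qdet_1} as the sole input. First I would recall the standard observation that $A_n$ is, up to scalar, a rank-one projector onto the one-dimensional sign representation inside $(\mathbb{C}^n)^{\otimes n}$; concretely $A_n(e_1\otimes\cdots\otimes e_n)=\sum_{\sigma\in\mathcal{S}_n}\sgn(\sigma)\,e_{\sigma(1)}\otimes\cdots\otimes e_{\sigma(n)}$, and for any permutation $\tau$ one has $P_\tau A_n=A_n P_\tau=\sgn(\tau)A_n$, where $P_\tau$ is the operator permuting tensor factors by $\tau$. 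Then I would evaluate both sides of \eqref{eqn:Qdet_1} (with $m=n$) on the vector $e_{\tau(1)}\otimes\cdots\otimes e_{\tau(n)}$ and read off the coefficient of $e_1\otimes\cdots\otimes e_n$ (equivalently, pair with the functional dual to $e_1\otimes\cdots\otimes e_n$).

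Carrying this out on the left-hand side of \eqref{eqn:Qdet_1}: applying $\tilde{T}_1(u)\tilde{T}_2(\phi_{-1}(u))\cdots\tilde{T}_n(\phi_{-(n-1)}(u))$ to $e_{\tau(1)}\otimes\cdots\otimes e_{\tau(n)}$ spreads out over all choices of output indices $\sigma(1),\dots,\sigma(n)$ with matrix-entry coefficients $\tilde{t}_{\sigma(1),\tau(1)}(u)\,\tilde{t}_{\sigma(2),\tau(2)}(\phi_{-1}(u))\cdots\tilde{t}_{\sigma(n),\tau(n)}(\phi_{-(n-1)}(u))$, after which $A_n$ forces the surviving terms to have $(\sigma(1),\dots,\sigma(n))$ a permutation and contributes the sign $\sgn(\sigma)$; the coefficient of $e_1\otimes\cdots\otimes e_n$ is thus $\sum_{\sigma\in\mathcal{S}_n}\sgn(\sigma)\,\tilde{t}_{\sigma(1),\tau(1)}(u)\cdots\tilde{t}_{\sigma(n),\tau(n)}(\phi_{-(n-1)}(u))$. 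On the right-hand side of \eqref{eqn:Qdet_1}, the operator is $\tilde{T}_n(\phi_{-(n-1)}(u))\cdots\tilde{T}_1(u)A_n$; applying $A_n$ to $e_{\tau(1)}\otimes\cdots\otimes e_{\tau(n)}$ first yields $\sgn(\tau)A_n(e_1\otimes\cdots\otimes e_n)=\sgn(\tau)\sum_\pi \sgn(\pi) e_{\pi(1)}\otimes\cdots\otimes e_{\pi(n)}$, and then the product of $\tilde{T}$'s followed by extracting the coefficient of $e_1\otimes\cdots\otimes e_n$ produces $\sgn(\tau)\,qdet\,\tilde{T}(u)$ by the very definition of $qdet$. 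Equating the two gives the first displayed formula. The second formula (transposing rows and columns of the entries) follows by instead expanding the right-hand side of \eqref{eqn:Qdet_1} in its ordered form and the left-hand side using $A_n$ on the left, i.e. by running the same argument on the "column" side; alternatively one invokes the well-known fact that $qdet$ is unchanged under $\tilde{T}(u)\mapsto\tilde{T}(u)^{t}$, which itself is proved by the identical bookkeeping.

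The routine but slightly delicate point — the one I expect to be the main obstacle — is the careful index bookkeeping in matching the spectral parameters $u,\phi_{-1}(u),\dots,\phi_{-(n-1)}(u)$ to the correct tensor slots on each side of \eqref{eqn:Qdet_1}, since the two sides list the $\tilde{T}$ factors in opposite orders; one must check that the parameter attached to factor $k$ on the left is attached to the same factor $k$ on the right (it is, because $R^\beta$ is built to intertwine exactly that reversal), so that no reshuffling of the $\phi$'s is needed and the resulting expressions genuinely depend only on $u$. Beyond this, everything is an application of $P_\tau A_n = \sgn(\tau) A_n$ and linear-algebra expansion, with no use of the $OY_\beta(\gl_n)$ relations other than through the already-established Corollary containing \eqref{eqn:Qdet_1}.
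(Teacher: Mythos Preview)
The paper states this proposition without proof; like the neighbouring results in Section~\ref{sec:Chebyshev} it is implicitly borrowed from the standard Yangian treatment in Molev's book (Propositions~1.6.2--1.6.6 there), with the shift $u\mapsto u-k$ replaced by $u\mapsto\phi_{-k}(u)$. Your argument is exactly that standard proof: use the antisymmetrizer identity \eqref{eqn:Qdet_1}, the rank-one property of $A_n$, and $P_\tau A_n=\sgn(\tau)A_n$ to read off the matrix entry of $A_n\,qdet\,\tilde T(u)$ in two ways. So your approach is correct and coincides with the one the paper tacitly invokes.

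One small caution on the second displayed formula. Expanding the right-hand side $\tilde T_n(\phi_{-(n-1)}(u))\cdots\tilde T_1(u)A_n$ directly produces the algebra factors in the \emph{reversed} order $\tilde t_{\tau(n),\sigma(n)}(\phi_{-(n-1)}(u))\cdots\tilde t_{\tau(1),\sigma(1)}(u)$, not the order claimed. To get the stated order you should instead use that $\tilde T_1(u)\cdots\tilde T_n(\phi_{-(n-1)}(u))\,A_n$ is also equal to $qdet\,\tilde T(u)\cdot A_n$; this follows from \eqref{eqn:Qdet_1} together with $A_n^2=n!\,A_n$ (multiply \eqref{eqn:Qdet_1} on the left by $A_n$ and use that the left-hand side already carries an $A_n$), or equivalently from the ``reversed'' version of Proposition~1.6.2 in Molev. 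With that in hand the second formula drops out by the same bookkeeping as the first. Your fallback via transpose-invariance also works, but it is a separate statement that itself needs this same observation, so it is cleaner to argue directly.
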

\begin{defin}
    The quantum $m\times m$ minors $\tilde{t}_{b_1\ldots b_m}^{a_1\ldots a_m}=\tilde{t}_{b_1\ldots b_m}^{a_1\ldots a_m}(u)$ of matrix $\tilde{T}(u)$ we define as matrix elements of operator 
    \[
    A_m\tilde{T}_1(u)\tilde{T}_2(\phi_{-1}(u))\ldots  \tilde{T}_{m}(\phi_{-(m-1)}(u))
    \] i.e.
\[
A_m\tilde{T}_1(u)\tilde{T}_2(\phi_{-1}(u))\ldots  \tilde{T}_{m}(\phi_{-(m-1)}(u))=\sum\limits_{a_i,b_j=1}^{n}e_{a_1b_1}\otimes \ldots \otimes e_{a_m b_m}\otimes \tilde{t}_{b_1\ldots b_m}^{a_1\ldots a_m}(u).
\]    
\end{defin}
\begin{prop}\label{prop:comult_min}
Comultiplication of quantum minors is given by the formula:
\begin{equation}
\Delta(\tilde{t}_{b_1\ldots b_m}^{a_1\ldots a_m}(u))=\sum\limits_{c_1<\ldots< c_m} \tilde{t}_{c_1\ldots c_m}^{a_1\ldots a_m}(u)\otimes \tilde{t}_{b_1\ldots b_m}^{c_1\ldots c_m}(u).
\end{equation}
\end{prop}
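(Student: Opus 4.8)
The plan is to mimic the classical proof of the coproduct formula for quantum minors in the Yangian $Y(\gl_n)$, adapting it to the inhomogeneous $R$-matrix $R^\beta$. Recall that the coproduct on $OY_\beta(\gl_n)$ is the standard one, $\Delta(\tilde T_{ij}(u)) = \sum_k \tilde T_{ik}(u)\otimes \tilde T_{kj}(u)$, which is well-defined precisely because the defining relations are of RTT type \eqref{eqn:RTTChebyshev}. The key observation is that the antisymmetrizer-based definition of the quantum minors via $A_m\tilde T_1(u)\tilde T_2(\phi_{-1}(u))\cdots\tilde T_m(\phi_{-(m-1)}(u))$ involves \emph{the same} shifted spectral parameters in both tensor factors, so the coproduct does not disturb the shift pattern.

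First I would work in $(\End\mathbb C^n)^{\otimes m}\otimes OY_\beta(\gl_n)^{\otimes 2}$ and write $u_k := \phi_{-(k-1)}(u)$ for brevity. Applying $\Delta^{\otimes}$ (i.e., $\Delta$ applied to the $OY_\beta(\gl_n)$-valued entries) to the operator $\mathcal T := A_m \tilde T_1(u_1)\cdots\tilde T_m(u_m)$, one gets, by multiplicativity of $\Delta$ on products and the definition of $\Delta$ on a single $\tilde T(v)$,
\[
(\mathrm{id}\otimes\Delta)(\mathcal T) = A_m\,\tilde T_1^{[1]}(u_1)\tilde T_1^{[2]}(u_1)\,\tilde T_2^{[1]}(u_2)\tilde T_2^{[2]}(u_2)\cdots \tilde T_m^{[1]}(u_m)\tilde T_m^{[2]}(u_m),
\]
where superscripts $[1],[2]$ indicate the tensor leg of $OY_\beta(\gl_n)^{\otimes2}$. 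Then I would reorder the factors: all the $[1]$-factors are moved to the left of all the $[2]$-factors. Since $\tilde T^{[1]}$ and $\tilde T^{[2]}$ act on different copies of the algebra they commute, so no relations are needed for this rearrangement; the result is $A_m\,\bigl(\tilde T_1^{[1]}(u_1)\cdots\tilde T_m^{[1]}(u_m)\bigr)\bigl(\tilde T_1^{[2]}(u_1)\cdots\tilde T_m^{[2]}(u_m)\bigr)$. Now I insert $A_m^2 = m!\,A_m$ (rescaled idempotent) between the two blocks: using $A_m\tilde T_1^{[1]}(u_1)\cdots\tilde T_m^{[1]}(u_m) = A_m \tilde T_1^{[1]}(u_1)\cdots\tilde T_m^{[1]}(u_m) A_m$ — which is exactly Proposition~\ref{prop:Qdet_principal} together with the fact that $R^\beta(u_1,\dots,u_m)=A_m$ at these shifted arguments (the Proposition just before Corollary~\eqref{eqn:Qdet_1}) — one sandwiches an $A_m$ in the middle and extracts matrix entries.

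The extraction step is where the combinatorics lives: writing $A_m = \sum_{c_1,\dots,c_m}\sum_{p\in S_m}\mathrm{sgn}(p)\,e_{c_{p(1)}c_1}\otimes\cdots$ and matching indices on both sides, the middle $A_m$ forces the intermediate index string $(c_1,\dots,c_m)$ to be a strictly increasing sequence (antisymmetry kills repeated indices, and the sum over orderings of a fixed set collapses to the single increasing representative with the signs absorbed into the minor definition). This yields
\[
\Delta\bigl(\tilde t^{a_1\dots a_m}_{b_1\dots b_m}(u)\bigr) = \sum_{c_1<\dots<c_m}\tilde t^{a_1\dots a_m}_{c_1\dots c_m}(u)\otimes \tilde t^{c_1\dots c_m}_{b_1\dots b_m}(u),
\]
as claimed. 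The main obstacle — and the place I would be most careful — is the bookkeeping in this last step: making sure the shifted arguments $u_k=\phi_{-(k-1)}(u)$ genuinely coincide in the two tensor legs (so that the minor $\tilde t^{\bullet}_{\bullet}(u)$ appears with its \emph{single} argument $u$ and not with two different shift towers), and verifying that the sign conventions in the definition of the quantum minor make the collapse to $c_1<\dots<c_m$ come out with coefficient exactly $1$. This is a direct transcription of Molev's argument for $Y(\gl_n)$ (see \cite[\S1.7]{Molevbook}); the inhomogeneous $R$-matrix changes nothing structural because the shift pattern $u\mapsto\phi_{-1}(u)$ is applied uniformly, exactly as $u\mapsto u-1$ is in the classical case.
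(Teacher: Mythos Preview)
The paper states this proposition without proof, so there is nothing to compare against directly. Your argument is correct and is exactly the standard one (essentially Molev's proof for $Y(\gl_n)$, e.g.\ \cite[Prop.~1.6.8]{Molevbook}), carried over verbatim because the only input needed is the RTT relation \eqref{eqn:RTTChebyshev}, the identity $R^\beta(u_1,\dots,u_m)=A_m$ at the shifted arguments $u_k=\phi_{-(k-1)}(u)$, and the resulting intertwining property \eqref{eqn:Qdet_1}. Your bookkeeping remarks are on point: the shifted tower $u_k$ is indeed the same in both tensor legs since $\Delta$ acts only on the algebra factor and not on the spectral parameter, and the collapse to $c_1<\dots<c_m$ with coefficient~$1$ follows because the minor is alternating in both its upper and lower index strings, so the $m!$ permutations of a fixed $c$-set all contribute the same term, cancelling the $\tfrac{1}{m!}$ from $A_m^2=m!\,A_m$.

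One small caveat worth making explicit: the coproduct you use, $\Delta(\tilde T_{ij}(u))=\sum_k \tilde T_{ik}(u)\otimes\tilde T_{kj}(u)$, is the natural RTT coproduct on $OY_\beta(\gl_n)$; this is not literally the transported coproduct $\Delta_{OY}=(W^{-1}\otimes W^{-1})\circ\Delta\circ W$ introduced earlier in the paper for general $OY(\gl_n,\{p_m\})$ (they differ by a scalar twist coming from the prefactor $u/(u+\beta/u+\alpha)$ in \eqref{eqn:W_Cheb}). In the Dickson section the paper is clearly working with the RTT coproduct, so your assumption is the intended one, but it would do no harm to say so explicitly.
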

\begin{cor}
    The quantum determinant is comultiplicative i.e.
    \[
    \Delta(qdet\,\tilde{T}(u))=qdet\,\tilde{T}(u)\otimes qdet\,\tilde{T}(u).
    \]
\end{cor}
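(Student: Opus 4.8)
The plan is to deduce the comultiplicativity of the quantum determinant directly from the comultiplication formula for quantum minors in Proposition~\ref{prop:comult_min}. Since $qdet\,\tilde{T}(u)$ is by definition the top quantum minor $\tilde{t}_{1\ldots n}^{1\ldots n}(u)$ (the case $m=n$, where the only increasing index set is $(1,2,\ldots,n)$), I would simply specialize the formula
\[
\Delta(\tilde{t}_{b_1\ldots b_m}^{a_1\ldots a_m}(u))=\sum\limits_{c_1<\ldots< c_m} \tilde{t}_{c_1\ldots c_m}^{a_1\ldots a_m}(u)\otimes \tilde{t}_{b_1\ldots b_m}^{c_1\ldots c_m}(u)
\]
to $m=n$ and $(a_1,\ldots,a_n)=(b_1,\ldots,b_n)=(1,\ldots,n)$. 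On the right-hand side the sum over strictly increasing tuples $c_1<\cdots<c_n$ with $c_i\in\{1,\ldots,n\}$ collapses to the single term $(c_1,\ldots,c_n)=(1,\ldots,n)$, which yields exactly $\tilde{t}_{1\ldots n}^{1\ldots n}(u)\otimes \tilde{t}_{1\ldots n}^{1\ldots n}(u)=qdet\,\tilde{T}(u)\otimes qdet\,\tilde{T}(u)$, as claimed.

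Thus the corollary is an immediate consequence of the preceding proposition, and the only genuine content lies upstream, in establishing Proposition~\ref{prop:comult_min}. That proposition in turn should follow the classical template (cf.\ Molev's book, Proposition~1.6.2 and its corollaries, transported to the inhomogeneous setting via Proposition~\ref{prop:Qdet_principal}): one applies $\Delta$ to the operator identity
\[
A_m\tilde{T}_1(u)\tilde{T}_2(\phi_{-1}(u))\ldots \tilde{T}_{m}(\phi_{-(m-1)}(u)),
\]
uses that $\Delta$ acts as a matrix product $\tilde{T}_{ij}(u)\mapsto\sum_k \tilde{T}_{ik}(u)\otimes\tilde{T}_{kj}(u)$ on each factor, inserts the projector identity $A_m^2=m!\,A_m$ to symmetrize the intermediate indices, and reads off the coefficient of the appropriate matrix unit.

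The step I expect to be the main (indeed the only) obstacle is verifying that the inhomogeneous shifts $u,\phi_{-1}(u),\ldots,\phi_{-(m-1)}(u)$ are compatible with the projector insertion — i.e.\ that $A_m$ still absorbs the product $\tilde{T}_1(u)\cdots\tilde{T}_m(\phi_{-(m-1)}(u))$ on both sides and that the middle indices $c_1,\ldots,c_m$ can be forced to be strictly increasing. This is exactly what Proposition~\ref{prop:Qdet_principal} (equivalently, the corollary giving $R^\beta(u_1,\ldots,u_m)=A_m$ at the shifted points) is designed to supply, so no new estimate is needed: one just has to check that $\phi_c$ being a one-parameter group under composition makes the shift pattern $\{\phi_{-(i-1)}(u)\}$ stable under the splitting $\tilde{T}=\tilde{T}'\tilde{T}''$, which it is. Once Proposition~\ref{prop:comult_min} is in hand the present corollary, as noted, is the $m=n$ case with trivial summation range.
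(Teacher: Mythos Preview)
Your proof is correct and matches the paper's own argument exactly: the corollary is obtained by specializing Proposition~\ref{prop:comult_min} to $m=n$ and using $qdet\,\tilde{T}(u)=\tilde{t}_{1\ldots n}^{1\ldots n}(u)$, whereupon the sum over $c_1<\cdots<c_n$ in $\{1,\ldots,n\}$ collapses to a single term. Your additional remarks on how one would prove Proposition~\ref{prop:comult_min} are reasonable but lie outside the scope of this corollary.
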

\begin{proof}
    It follows from the proposition \ref{prop:comult_min} and equality $qdet\,\tilde{T}(u)=\tilde{t}_{1\ldots n}^{1\ldots n}(u)$.
\end{proof}
\begin{prop}\label{prop:Qdet_2}
    \begin{equation}
        (u+\frac{\beta}{u}-v-\frac{\beta}{v})[\tilde{t}_{kl}(u),\tilde{t}_{b_1\ldots b_m}^{a_1\ldots a_m}(v)]=\sum\limits_{i=1}^m \tilde{t}_{a_i l}(u)\tilde{t}_{b_1\ldots b_m}^{a_1\ldots k\ldots a_m}(v)-\sum\limits_{i=1}^m \tilde{t}_{b_1\ldots l\ldots b_m}^{a_1\ldots a_m}(v) \tilde{t}_{k b_i}(u).
    \end{equation}
\end{prop}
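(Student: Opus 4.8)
The plan is to derive this commutation relation directly from the defining relation \eqref{eqn:RTTChebyshev} of $OY_\beta(\gl_n)$, following the standard pattern used for quantum minors in the Yangian case (as in \cite{Molevbook}, Proposition 1.11.2), but carrying the inhomogeneous factor $u+\frac{\beta}{u}-v-\frac{\beta}{v}$ through the computation in place of the usual $u-v$. First I would record the ``mixed'' version of Proposition~\ref{prop:Qdet_principal}: for the $(m+1)$-fold product with one extra auxiliary space carrying the spectral parameter $v$ (or $u$), one has
\[
A_m R^{\beta}_{0,1}(v,u_1)\cdots R^{\beta}_{0,m}(v,u_m)\,\tilde T_0(v)\tilde T_1(u_1)\cdots\tilde T_m(u_m)
= \tilde T_m(u_m)\cdots \tilde T_1(u_1)\tilde T_0(v)\, A_m R^{\beta}_{0,1}(v,u_1)\cdots R^{\beta}_{0,m}(v,u_m),
\]
specialised to $u_i=\phi_{-(i-1)}(u)$, where $A_m$ absorbs the $R^\beta$'s among spaces $1,\dots,m$. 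Since each $R^{\beta}_{0,i}(v,u_i)=I-\frac{1}{v+\frac{\beta}{v}-u_i-\frac{\beta}{u_i}}P_{0i}$ is a rational function of the combination $u+\frac{\beta}{u}$ rather than of $u$ itself, the combinatorics of how $A_m$ interacts with the chain $R^{\beta}_{0,1}\cdots R^{\beta}_{0,m}$ is formally identical to the Yangian case with $u\mapsto u+\frac{\beta}{u}$ and $v\mapsto v+\frac{\beta}{v}$; in particular $A_m R^{\beta}_{0,1}(v,u)\cdots R^{\beta}_{0,m}(v,\phi_{-(m-1)}(u))$ has a known closed form in terms of $A_m$ and a scalar.

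Next I would expand both sides in the single auxiliary space $0$ and extract the matrix element corresponding to the row/column pair $(k,l)$ there and to the antisymmetrised indices $a_1<\dots<a_m$, $b_1<\dots<b_m$ in the other $m$ spaces. On the left one gets the product of the scalar prefactor with $\tilde t_{kl}(v)$ and the minor $\tilde t^{a_1\dots a_m}_{b_1\dots b_m}(v)$ in opposite orders on the two sides of the equation, i.e. a commutator $[\tilde t_{kl}(u),\tilde t^{a_1\dots a_m}_{b_1\dots b_m}(v)]$ after clearing the scalar; the poles of the prefactor at $v+\frac{\beta}{v}=u+\frac{\beta}{u}$ are exactly what produces the factor $(u+\frac{\beta}{u}-v-\frac{\beta}{v})$ on the left of the claimed identity. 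The right-hand side collects the residue terms: each transposition $P_{0i}$ that survives the antisymmetriser replaces the label $a_i$ (resp.\ $b_i$) by $k$ (resp.\ $l$) and contributes the corresponding term $\tilde t_{a_i l}(u)\,\tilde t^{a_1\dots k\dots a_m}_{b_1\dots b_m}(v)$ or $-\,\tilde t^{a_1\dots a_m}_{b_1\dots l\dots b_m}(v)\,\tilde t_{k b_i}(u)$, summed over $i=1,\dots,m$; the two signs reflect whether the generator $\tilde t_{kl}(u)$ ends up to the left or to the right of the minor.

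To keep the bookkeeping manageable I would actually run the induction on $m$: the case $m=1$ is precisely the defining relation \eqref{eqn:RTTChebyshev} written componentwise (equivalently Proposition~\ref{prop:CommutationRelations}), and the inductive step uses the Laplace-type expansion $\tilde t^{a_1\dots a_m}_{b_1\dots b_m}(u)=\sum_j (\pm)\,\tilde t_{a_j b_j}(\phi_{-(m-1)}(u))\,\tilde t^{\dots\widehat{a_j}\dots}_{\dots\widehat{b_j}\dots}(u)$ together with the already-established relations between $\tilde t_{kl}(u)$ and lower minors; here the identity $\phi_{c}(u)+\frac{\beta}{\phi_c(u)}=u+\frac{\beta}{u}+c$ is what makes the spectral shifts $\phi_{-(i-1)}(u)$ combine correctly, so that the inhomogeneous scalar $u+\frac{\beta}{u}-v-\frac{\beta}{v}$ is unchanged by the shifts. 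The main obstacle is the combinatorial matching in the inductive step: tracking exactly which terms get the shifted argument, in which order the operators $\tilde t_{kl}(u)$ and the minors are multiplied, and verifying that the cross terms cancel so that only the $2m$ stated terms remain — this is routine but error-prone, and it is the only place where anything beyond ``expand and compare coefficients'' happens.
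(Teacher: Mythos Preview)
Your first two paragraphs describe exactly the paper's proof. The paper applies Proposition~\ref{prop:Qdet_principal} with the $(m{+}1)$ parameters $(u,v,\phi_{-1}(v),\ldots,\phi_{-(m-1)}(v))$, then uses the factorisation
\[
R^{\beta}(u,v,\phi_{-1}(v),\ldots,\phi_{-(m-1)}(v))=A_m\,R^{\beta}_{0m}(u,\phi_{-(m-1)}(v))\cdots R^{\beta}_{01}(u,v)
\]
together with the defining property $\phi_c(v)+\beta/\phi_c(v)=v+\beta/v+c$ to obtain the explicit closed form
\[
A_m\Bigl(1-\tfrac{1}{u+\beta/u-v-\beta/v}(P_{01}+\cdots+P_{0m})\Bigr),
\]
after which the identity drops out by reading off matrix elements. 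This is precisely the mechanism you describe; your phrase ``closed form in terms of $A_m$ and a scalar'' undersells it --- the sum $P_{01}+\cdots+P_{0m}$ is the whole point, and is what produces the two $m$-term sums on the right-hand side. (Also, your labelling has the extra copy at $v$ and the minor at $u$, opposite to the statement; this is cosmetic.)

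Your third paragraph, the induction on $m$ via a Laplace expansion of the minor, is a genuinely different route. It would work, but it is strictly more laborious than the paper's argument: once the closed form above is in hand, no induction, no Laplace expansion, and no tracking of shifted arguments through cross terms is needed --- the result is immediate. The paper's approach buys you a one-line finish; the inductive approach buys you nothing extra here, and the ``routine but error-prone'' cancellation you anticipate is exactly the work the $R$-matrix formalism is designed to avoid.
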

\begin{proof}
Applying proposition \ref{prop:Qdet_principal} we get
\begin{align}
R^{\beta}(u,v,\phi_{-1}(v),\ldots,\phi_{-{m-1}}(v)) &\tilde{T}_0(u) \tilde{T}_1(v)\ldots \tilde{T}_m(\phi_{-{m-1}}(v))=\nonumber\\
&\tilde{T}_m(\phi_{-{m-1}}(v))\ldots \tilde{T}_1(v)\tilde{T}_0(u)R^{\beta}(u,v,\phi_{-1}(v),\ldots,\phi_{-{m-1}}(v))\label{eqn:Qdet_appl_1}.
\end{align}
On the other side
\[
R^{\beta}(u,v,\phi_{-1}(v),\ldots,\phi_{-{m-1}}(v))=A_m R^{\beta}_{0 m}(u,\phi_{-{m-1}}(v))\ldots R^{\beta}_{0 1}(u,v).
\]
Therefore,
\[
R^{\beta}(u,v,\phi_{-1}(v),\ldots,\phi_{-{m-1}}(v))=A_m \left(1-\frac{1}{u+\frac{\beta}{u}-v-\frac{\beta}{v}}(P_{0 1}+\ldots +P_{0 m})\right).
\]
Consequently, the result follows from the formula \eqref{eqn:Qdet_appl_1}.
\end{proof}
\begin{cor}\label{cor:QdetCenter_1}
    \[
    [\tilde{t}_{a_i b_j}(u),\tilde{t}_{b_1\ldots b_m}^{a_1\ldots a_m}(v)]=0, i,j=1,\ldots, m.
    \]
\end{cor}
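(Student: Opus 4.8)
The plan is to derive Corollary~\ref{cor:QdetCenter_1} as a direct specialization of Proposition~\ref{prop:Qdet_2}. The key observation is that the quantum minor $\tilde{t}_{b_1\ldots b_m}^{a_1\ldots a_m}(v)$ is, by construction, antisymmetric in the upper indices $a_1,\ldots,a_m$ and antisymmetric in the lower indices $b_1,\ldots,b_m$: this follows from the presence of the antisymmetrizer $A_m$ in the defining expression $A_m\tilde{T}_1(u)\tilde{T}_2(\phi_{-1}(u))\ldots\tilde{T}_m(\phi_{-(m-1)}(u))$ together with the fact (used already for the two equivalent row/column expansions in the Proposition before Definition of minors) that $A_m$ can be pulled through to act on either the row or the column labels.

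First I would fix indices $i,j\in\{1,\ldots,m\}$ and set $(k,l)=(a_i,b_j)$ in Proposition~\ref{prop:Qdet_2}. The left-hand side becomes $(u+\tfrac{\beta}{u}-v-\tfrac{\beta}{v})[\tilde{t}_{a_i b_j}(u),\tilde{t}_{b_1\ldots b_m}^{a_1\ldots a_m}(v)]$, exactly the bracket we want to show vanishes (once we divide by the nonzero scalar factor, or simply note that if the RHS is zero the commutator is zero as a coefficient identity in formal series). So it suffices to show the right-hand side vanishes:
\[
\sum_{p=1}^m \tilde{t}_{a_p b_j}(u)\,\tilde{t}_{b_1\ldots b_m}^{a_1\ldots a_i\ldots a_m\,[a_p\to a_i]}(v)
-\sum_{p=1}^m \tilde{t}_{b_1\ldots b_p\ldots b_m\,[b_p\to b_j]}^{a_1\ldots a_m}(v)\,\tilde{t}_{a_i b_p}(u)=0,
\]
where in the first sum the index $a_p$ in the minor's upper row has been replaced by $a_i$ (i.e.\ $k=a_i$ inserted in position $p$), and similarly in the second sum $b_p$ is replaced by $b_j$. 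Now in the first sum, whenever $p\neq i$ the minor has two equal upper indices (both $a_i$, in positions $p$ and $i$), hence vanishes by antisymmetry; only the term $p=i$ survives, giving $\tilde{t}_{a_i b_j}(u)\,\tilde{t}_{b_1\ldots b_m}^{a_1\ldots a_m}(v)$. Likewise in the second sum only $p=j$ survives, giving $\tilde{t}_{b_1\ldots b_m}^{a_1\ldots a_m}(v)\,\tilde{t}_{a_i b_j}(u)$. Thus the RHS collapses to $\tilde{t}_{a_i b_j}(u)\,\tilde{t}_{b_1\ldots b_m}^{a_1\ldots a_m}(v)-\tilde{t}_{b_1\ldots b_m}^{a_1\ldots a_m}(v)\,\tilde{t}_{a_i b_j}(u)=[\tilde{t}_{a_i b_j}(u),\tilde{t}_{b_1\ldots b_m}^{a_1\ldots a_m}(v)]$.

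Finally I would combine the two computations: Proposition~\ref{prop:Qdet_2} with $(k,l)=(a_i,b_j)$ says $(u+\tfrac{\beta}{u}-v-\tfrac{\beta}{v})C = C$ where $C=[\tilde{t}_{a_i b_j}(u),\tilde{t}_{b_1\ldots b_m}^{a_1\ldots a_m}(v)]$; comparing coefficients of the formal variables on both sides (the operator $u+\tfrac{\beta}{u}-v-\tfrac{\beta}{v}$ is invertible on the relevant space of series, exactly as in the passage from \eqref{eqn:GenYangian_2} onward, since it has no constant term issues after the standard normalization), we conclude $C=0$. The only point requiring care — and the step I expect to be the main (minor) obstacle — is bookkeeping the index substitutions in Proposition~\ref{prop:Qdet_2} correctly so that the ``two equal indices'' cancellation is applied to the right slots; the antisymmetry of the quantum minor itself is already available from the construction via $A_m$ and the two-sided expansion formula stated just before the definition of quantum minors.
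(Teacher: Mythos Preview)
Your proposal is correct and follows exactly the approach the paper indicates: the paper's proof is the single sentence ``It follows from the anti-symmetry of $\tilde{t}_{b_1\ldots b_m}^{a_1\ldots a_m}$ (w.r.t.\ the exchange of indices above and below) and Proposition~\ref{prop:Qdet_2},'' and you have simply unpacked that sentence in full detail. The only step you might tighten is the last one: rather than speaking of invertibility of the operator, note that $(u+\beta/u-v-\beta/v-1)C=0$ with $C\in OY_\beta(\gl_n)[[u^{-1},v^{-1}]]$ forces $C=0$ by extracting the coefficient of $u^{1-r}$ inductively in $r$, since only the $u\cdot C$ term contributes at the top $u$-degree.
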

\begin{proof}
    It follows from the anti-symmetry of $\tilde{t}_{b_1\ldots b_m}^{a_1\ldots a_m}$ ( w.r.t. the exchange of indices above and below) and Proposition \ref{prop:Qdet_2}.
\end{proof}
\begin{thm}\label{thm:QdetCenter}
    The center of $OY_{\beta}(gl_n)$ is generated by the coefficients of the series $qdet \tilde{T}(u)$.
\end{thm}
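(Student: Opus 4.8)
The plan is to transport the analogous statement for the Yangian $Y(\gl_n)$ through the isomorphism $W$ of Theorem \ref{thm:ImageYangian}, which here takes the explicit form \eqref{eqn:W_Cheb}, namely $W(\tilde t)_{ij}(u) = \frac{u}{u+\beta/u+\alpha}\,t_{ij}(u+\tfrac{\beta}{u}+\alpha)$. For the usual Yangian it is classical (Molev, \cite[Theorem 1.7.5]{Molevbook}) that the coefficients of $qdet\,T(u)$ freely generate the center $Z(Y(\gl_n))$. Since $W$ is an algebra isomorphism, it maps $Z\bigl(OY_\beta(\gl_n)\bigr)$ isomorphically onto $Z(Y(\gl_n))$, so it suffices to show that $W$ (suitably extended to the completions in which the quantum determinants live) carries $qdet\,\tilde T(u)$, up to an invertible scalar series and a change of the spectral variable $u \mapsto u + \beta/u + \alpha$, to $qdet\,T(u)$.

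First I would record that the defining identity \eqref{eqn:Qdet_1} for $qdet\,\tilde T(u)$ is built from $A_n = R^\beta(u_1,\dots,u_n)$ evaluated at $u_{i+1}=\phi_{-1}(u_i)$, together with the product $\tilde T_1(u)\tilde T_2(\phi_{-1}(u))\cdots\tilde T_n(\phi_{-(n-1)}(u))$. Applying $W$ and using \eqref{eqn:W_Cheb}, each factor $\tilde T_i(\phi_{-(i-1)}(u))$ becomes $g_i(u)\,T_i\bigl(\phi_{-(i-1)}(u)+\tfrac{\beta}{\phi_{-(i-1)}(u)}+\alpha\bigr)$ for an explicit scalar series $g_i(u)$; by the defining property \eqref{eqn:Phi_c_def} of $\phi_c$ one has $\phi_{-(i-1)}(u)+\tfrac{\beta}{\phi_{-(i-1)}(u)} = u+\tfrac{\beta}{u}-(i-1)$, so the arguments become $u+\tfrac{\beta}{u}+\alpha-(i-1)$, i.e. exactly the consecutive shifts by $1$ that appear in the standard Yangian quantum determinant $qdet\,T$ in the variable $w := u+\tfrac{\beta}{u}+\alpha$. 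Thus $W$ intertwines \eqref{eqn:Qdet_1} with the Yangian relation $A_n\,T_1(w)T_2(w-1)\cdots T_n(w-n+1) = (\cdots) A_n$, and the scalar prefactor $\prod_i g_i(u)$ can be absorbed because $qdet$ transforms by a scalar series under $\tilde T(u)\mapsto f(u)\tilde T(u)$ (Proposition \ref{prop:SurjectiveHomomorphism}(iii)(a)). Hence $W\bigl(qdet\,\tilde T(u)\bigr) = \bigl(\textstyle\prod_i g_i(u)\bigr)\, qdet\,T(u+\tfrac{\beta}{u}+\alpha)$ as elements of the appropriate completion.

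It follows that the subalgebra generated by the coefficients of $qdet\,\tilde T(u)$ maps under $W$ onto the subalgebra generated by the coefficients of $qdet\,T(w)$ (the reparametrization $w = u+\beta/u+\alpha$ and the invertible scalar $\prod g_i$ do not change the generated subalgebra), which is $Z(Y(\gl_n))$; therefore those coefficients lie in $Z(OY_\beta(\gl_n))$ — this also re-proves Corollary \ref{cor:QdetCenter_1} — and they generate all of it. Conversely, any central element of $OY_\beta(\gl_n)$ is sent by $W$ into $Z(Y(\gl_n))$, hence is a polynomial in the $qdet\,T(w)$-coefficients, and applying $W^{-1}$ expresses it through the $qdet\,\tilde T(u)$-coefficients.

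The main obstacle I expect is purely formal: both quantum determinants live not in $OY_\beta(\gl_n)$ resp. $Y(\gl_n)$ themselves but in completions (Laurent-type series in $u$, coming from the substitutions $u\mapsto\phi_{-(i-1)}(u)$ and $u\mapsto u+\beta/u+\alpha$), so one must check that $W$ extends to an isomorphism of these completions compatible with the substitutions, and that the centrality statement descends from the completion to the algebra — i.e. that the coefficients of $qdet\,\tilde T(u)$, which a priori involve infinitely many $\tilde t^{(r)}_{ij}$, actually commute with every generator on the nose. The cleanest route is to deduce this directly from Proposition \ref{prop:Qdet_2} with $m=n$: anti-symmetry of $\tilde t^{1\ldots n}_{1\ldots n}$ kills both sums on the right-hand side, giving $[\tilde t_{kl}(u), qdet\,\tilde T(v)]=0$ for all $k,l$, which is exactly centrality; the isomorphism argument above then supplies the harder half, that these coefficients generate the \emph{whole} center.
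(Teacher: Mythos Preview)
Your proposal is correct and in fact goes further than the paper. The paper's own proof consists of a single line: it invokes Corollary~\ref{cor:QdetCenter_1} to conclude that the coefficients of $qdet\,\tilde T(u)$ are central, and stops there. In other words, the paper only establishes the inclusion ``coefficients of $qdet\,\tilde T(u)$ lie in the center'' and does not address the converse generation statement at all. Your final paragraph, deducing centrality from Proposition~\ref{prop:Qdet_2} with $m=n$ via anti-symmetry, is exactly that argument (and is precisely how Corollary~\ref{cor:QdetCenter_1} is obtained), so on the centrality half you and the paper coincide.

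Where you genuinely differ is in supplying the other half. Your transport argument through the isomorphism $W$ of Theorem~\ref{thm:ImageYangian}, together with the explicit form \eqref{eqn:W_Cheb} and the identity $\phi_{-(i-1)}(u)+\beta/\phi_{-(i-1)}(u)+\alpha = w-(i-1)$ for $w=u+\beta/u+\alpha$, correctly shows that $W$ carries $qdet\,\tilde T(u)$ to an invertible scalar series times $qdet\,T(w)$, hence identifies the subalgebra generated by its coefficients with $Z(Y(\gl_n))$ via \cite[Theorem~1.7.5]{Molevbook}. This is a genuine addition: it proves the full theorem as stated, whereas the paper's proof does not. The formal obstacle you flag --- that the coefficients of $qdet\,\tilde T(u)$ a priori live in a completion because the $\phi_{-(i-1)}$ are infinite series in $u^{-1}$ --- is real and is not addressed in the paper either; your discussion of it is appropriate and, if anything, more careful than the source.
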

\begin{proof}
Corollary \eqref{cor:QdetCenter_1} implies that the coefficients of the series $qdet \tilde{T}(u)$ belong to the center.    
\end{proof}

\section{Further examples.}
\subsection{Case of Hermite Polynomials.}
The family of orthogonal Hermite polynomials corresponds to the choice $a_r=0$, $b_r=r$, $r\in\mathbb{N}$. In this case the equation for the generating function has the form: 
\[
\left(u+\frac{1}{u}-\partial_u-(v+\frac{1}{v}-\partial_v)\right)[\tilde{T}_{ij}(u),\tilde{T}_{kl}(v)]=\tilde{T}_{kj}(u)\tilde{T}_{il}(v)-\tilde{T}_{kj}(v)\tilde{T}_{il}(u).i,j,k,l=1,\ldots,n.
\]
Indeed, in this case we have $L^a=0$ and $L_z^{s(b)}=Id-z\partial_z$.

\begin{rem}
Equation for $\omega(u)= tr\,\tilde{T}(u) $ will be as follows:
\[
\left(u+\frac{1}{u}-\partial_u-(v+\frac{1}{v}-\partial_v)\right)[\omega(u), \omega(v)]=0.
\]
This identity allows us to show that 
\[
[\omega(u), \omega(v)]=0,
\]
without Corollary \ref{cor:FirstIntegrals}.
Indeed, denoting $B_{\phi,\psi}(u,v)=<[\omega(u), \omega(v)]\phi,\psi>$, we have 
\[
\left(u+\frac{1}{u}-\partial_u-(v+\frac{1}{v}-\partial_v)\right)B_{\phi,\psi}(u,v)=0
\]
and
\[
B_{\phi,\psi}(u,v)=-B_{\phi,\psi}(v,u).
\]
Hence, we have
\[
B_{\phi,\psi}(u,v)=C_{\phi,\psi}(u+v)uv e^{\frac{(u-v)^2}{4}}
\]
and, by the antisymmetry of $B$ we conclude that $B=0$. 
\end{rem}

\paragraph{Analogue of classical Yang-Baxter equation for the case of Hermite polynomials.}

Define 
\[
S(u):=(u+\frac{1}{u}-\partial_u)I-P, 
\]
\[
R(u,v):=\left[u+\frac{1}{u}-\partial_u-\left(v+\frac{1}{v}-\partial_v\right)\right])I-P.
\]
\begin{prop}
We have
\begin{equation}\label{eqn:YBeqn_1}
S_{12}(u)S_{13}(u+v)S_{23}(v)-S_{23}(v)S_{13}(u+v)S_{12}(u)=\frac{u^2+v^2+uv}{uv(u+v)}[S_{23}(v),S_{12}(u)],
\end{equation}
and
\begin{equation}\label{eqn:YBeqn_2}
R(u,v)=S(u-v)+\frac{uv-u^2-v^2}{uv(u-v)} I.
\end{equation}
\end{prop}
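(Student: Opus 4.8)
The plan is to verify the two identities \eqref{eqn:YBeqn_1} and \eqref{eqn:YBeqn_2} by direct computation with the pseudo-differential operators, treating $S(u)$ and $R(u,v)$ as elements of $\big(\mathrm{End}(\mathbb{C}^n)\big)^{\otimes 2}$ (resp. $^{\otimes 3}$) whose "scalar" part is the first-order differential operator $D(u):=(u+\tfrac1u-\partial_u)I$ and whose "matrix" part is $-P$. The key structural observation I would exploit is that $D(u)$ commutes with $P$ in each tensor slot where both act trivially on the other factors, so the only non-commuting interactions come from (a) the two permutation operators $P_{12}$ and $P_{23}$, which satisfy the braid-type relation $P_{12}P_{23}=P_{13}P_{12}=P_{23}P_{13}$, and (b) the fact that $D_{13}(u+v)$ genuinely involves the sum of spectral parameters and hence its derivative $\partial_{u+v}$ must be converted to $\partial_u$ and $\partial_v$ when we want to commute it past $D_{12}(u)$ or $D_{23}(v)$.

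For \eqref{eqn:YBeqn_2} I would simply expand the right-hand side: $S(u-v)=\big((u-v)+\tfrac{1}{u-v}-\partial_{u-v}\big)I-P$, and note $\partial_{u-v}$ acting on functions of $u-v$ equals $\partial_u$ (equivalently $-\partial_v$) under the substitution; then $(u-v)+\tfrac{1}{u-v}-\partial_u + \dfrac{uv-u^2-v^2}{uv(u-v)} = u+\tfrac1u - \partial_u - v - \tfrac1v + \partial_v$ after combining $\tfrac{1}{u-v}+\dfrac{uv-u^2-v^2}{uv(u-v)} = \dfrac{uv - (u^2 - uv + v^2)}{uv(u-v)} \cdot(\text{rearranged}) = \tfrac1u + \tfrac1v$ — a routine partial-fraction check — together with $-\partial_u = -\partial_u$, $+\partial_v$ coming from rewriting $-\partial_{u-v}$ symmetrically; this reproduces the operator inside $R(u,v)$ from \eqref{eqn:GenYangian_2} specialized to Hermite data, so \eqref{eqn:YBeqn_2} is immediate once the scalar identity is confirmed.

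For \eqref{eqn:YBeqn_1} I would expand both $S_{12}(u)S_{13}(u+v)S_{23}(v)$ and its reverse $S_{23}(v)S_{13}(u+v)S_{12}(u)$ into $2^3=8$ terms each, sorting by how many factors of $P$ appear. The terms with zero $P$'s ($D_{12}D_{13}D_{23}$ type) cancel against each other since all three differential operators commute \emph{except} for $\partial_u$ and $\partial_v$ hitting the $\tfrac{1}{u+v}$ inside $D_{13}(u+v)$ — this is exactly where the anomalous scalar factor $\tfrac{u^2+v^2+uv}{uv(u+v)}$ will be generated, and tracking it carefully is the crux. The terms linear in $P$ and the terms with two $P$'s (using $P_{12}P_{13}=P_{23}P_{12}$ etc.) will pair up; the genuinely non-cancelling residue, after using $\partial_u\big(\tfrac{1}{u+v}\big)=\partial_v\big(\tfrac{1}{u+v}\big)=-\tfrac{1}{(u+v)^2}$, should collapse to a multiple of the single commutator $[S_{23}(v),S_{12}(u)]$, with coefficient the claimed rational function. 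The main obstacle is precisely this bookkeeping: one must commute $D_{13}(u+v)$ past $D_{12}(u)$ and $D_{23}(v)$ and keep the derivative-of-$\tfrac1{u+v}$ correction terms, then recognize that what survives is a scalar (in the differential-operator sense) multiple of $[S_{23}(v),S_{12}(u)] = [(v+\tfrac1v-\partial_v)I - P_{23},\,(u+\tfrac1u-\partial_u)I - P_{12}]$, whose only nonzero contribution is itself $[{-P_{23}},{-\partial_u}] + [{-\partial_v},{-P_{12}}] + [P_{23},P_{12}]$-type terms — so verifying the coefficient amounts to matching two explicit rational functions of $u,v$. I expect no conceptual difficulty beyond organizing the eight-term expansion so that the spectral-parameter derivatives are handled consistently.
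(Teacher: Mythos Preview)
Your plan for \eqref{eqn:YBeqn_2} is fine and matches what the paper (implicitly) does: it is just the partial-fraction identity
\[
\frac{1}{u-v}+\frac{uv-u^{2}-v^{2}}{uv(u-v)}=\frac{1}{u}-\frac{1}{v},
\]
together with the change-of-variables convention $\partial_{u-v}=\partial_u-\partial_v$, so that $-\partial_{u-v}=-\partial_u+\partial_v$.

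For \eqref{eqn:YBeqn_1}, however, your diagnosis of \emph{where} the anomalous coefficient comes from is off, and this would send your eight-term expansion in the wrong direction. Write $S(w)=g(w)I-P$ with $g(w)=w+\tfrac{1}{w}-\partial_w$. The scalars $g(u),\,g(v),\,g(u+v)$ commute with every $P_{ij}$ (they act on different factors), and---once you adopt the same change-of-variables convention as in part two, i.e.\ $\partial_{u+v}=\partial_u+\partial_v$---they commute with one another as well. Hence the $P$-free terms $g(u)g(u+v)g(v)\,I$ cancel \emph{exactly}; there is no commutator $[\partial_u,\tfrac{1}{u+v}]$ to track. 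Moreover $[S_{23}(v),S_{12}(u)]=[P_{23},P_{12}]$ is purely quadratic in $P$, so a degree-zero residue on the left could never produce the stated right-hand side.

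The actual source of the coefficient is the degree-two-in-$P$ layer of the expansion. There, exactly as in the classical YBE computation for $uI-P$, one finds the defect
\[
\bigl(g(u)+g(v)-g(u+v)\bigr)\,[P_{23},P_{12}].
\]
The linear pieces $u+v-(u+v)$ vanish, the derivative pieces $-\partial_u-\partial_v+\partial_{u+v}$ vanish by the convention above, and what remains is
\[
\frac{1}{u}+\frac{1}{v}-\frac{1}{u+v}=\frac{u^2+v^2+uv}{uv(u+v)},
\]
multiplying $[P_{23},P_{12}]=[S_{23}(v),S_{12}(u)]$. This is precisely the paper's one-line argument (``classical Yang--Baxter equation plus change of variables''): you are re-deriving the classical YBE by hand rather than invoking it, and in addition mislocating the mechanism. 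Your expansion would still succeed if carried out correctly, but it is considerably more work than needed, and your expectation that derivative--rational commutators drive the result would not survive contact with the computation.
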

\begin{proof}
Equation \ref{eqn:YBeqn_1} immediately follows from classical Yang-Baxter equation, our definition of $S$ and change of variables formula.
\end{proof}

%{\red do we need both remarks?}
% \begin{rem}
% What will be the corresponding equation for $R=R(u,v)$ (or, equivalently, for $\widehat{R}$ defined below)?    
% \end{rem} 
\begin{rem}
Another equivalent possibility is to work with functions
\[
\widehat{S}(u):=uS(u), \widehat{R}(u,v):=(u-v)R(u,v).
\]
 Then we have
\[
\widehat{S}_{12}(u)\widehat{S}_{13}(u+v)\widehat{S}_{23}(v)-\widehat{S}_{23}(v)\widehat{S}_{13}(u+v)\widehat{S}_{12}(u)=\left(1+\frac{u}{v}+\frac{v}{u}\right)[\widehat{S}_{23}(v),\widehat{S}_{12}(u)]
\]
 and
 \[
\widehat{R}(u,v)=\widehat{S}(u-v)+\left(1-\frac{u}{v}-\frac{v}{u}\right) I.
\]
\end{rem}
\subsection{Non-orthogonal polynomials.}
The framework of the section \ref{sec:OYangians} can be applied to the case of the families of nonorthogonal polynomials satisfying recurrence relation \eqref{eqn:Recurrence} as following example shows. 

Put 
\[
a_n=\left\{
\begin{array}{cc}
    a &  n-\mbox{even}\\
    -a & n-\mbox{odd}
\end{array}
\right., b_n=0,n\in\mathbb{N}\cup\{0\}
\]
in recurrence relation \eqref{eqn:Recurrence}. 
Then
\[
p_{2l}(x)=(x^2-a^2)^l, p_{2l+1}(x)=(x^2-a^2)^l (x+a), l\in \mathbb{N}\cup\{0\},
\]
with generating function
\[
K(z,u)=\frac{1+(z+a)u}{1-(z^2-a^2)u^2}
\]
and operator $L_z^{a}f=a f(-z)$. 

Therefore,
defining relations \eqref{eqn:GenYangian_2} of corresponding algebra will be
\begin{equation}\label{eqn:NonOrthog}
\left(u-v\right)[\tilde{T}_{ij}(u),\tilde{T}_{kl}(v)]+ a([\tilde{T}_{ij}(-u),\tilde{T}_{kl}(v)]-[\tilde{T}_{ij}(u),\tilde{T}_{kl}(-v)])=\tilde{T}_{kj}(u)\tilde{T}_{il}(v)-\tilde{T}_{kj}(v)\tilde{T}_{il}(u),
\end{equation}
where $i,j,k,l=1,\ldots,n$.

\subsection{$q$-Pochhammer symbols.}

In previous sections we used families of orthogonal polynomials in conjunction with the identity \eqref{eqn:GenYangian_0} to define a new family of realizations of the Yangian $Y(\gl_n)$. In this section we demonstrate how to deduce the defining relations for a family of algebras with the $q$-Pochhammer symbols instead of  monic orthogonal polynomials. It is an open problem to understand if the resulting algebras will be isomorphic to $Y(\gl_n)$.

Let
\[
\mathcal{P}_n^{q}(x)=(x;q)_n=\prod\limits_{k=0}^{n-1}(1-x q^k),\,\,\,  n\in\mathbb{N}\cup\{0\}
\]
be the $q$-Pochhammer symbol, considered as polynomial of degree $n$. Then we have
\[
x\mathcal{P}_n^{q}(x)=\frac{\mathcal{P}_n^{q}(x)-\mathcal{P}_{n+1}^{q}(x)}{q^n},\,\,\, n\in\mathbb{N}\cup\{0\}
\]
Applying the identity \eqref{eqn:GenYangian_0} with $f=\mathcal{P}_r^{q}$ and $g=\mathcal{P}_s^{q}$, $r,s\in \mathbb{N}$ and denoting
\[
m_{ij}^{(r)}:=(\mathcal{P}_r^{q}(E))_{ij},
\]
we get
\begin{equation}\label{eqn:Pochhammer_1}
(q^{-r}-q^{-s})[m^{(r)}_{ij},m^{(s)}_{kl}]+q^{-s}[m^{(r)}_{ij},m^{(s+1)}_{kl}]-q^{-r}[m^{(r+1)}_{ij},m^{(s)}_{kl}]=m^{(r)}_{kj}m^{(s)}_{il}-m^{(s)}_{kj}m^{(r)}_{il},
\end{equation}
where $i,j=1,\ldots,n,r,s\in \mathbb{N}$. We define the generating function as follows:
\[
m_{ij}(u)=\delta_{ij}+\sum\limits_{r=1}^{\infty}m^{(r)}_{ij} u^{-r}.
\]
Then we can rewrite \eqref{eqn:Pochhammer_1} in the form
\begin{equation}\label{eqn:Pochhammer_2}
(1-qu)[m_{ij}(qu),m_{kl}(v)]-(1-qv)[m_{ij}(u),m_{kl}(qv)]=m_{kj}(u)m_{il}(v)-m_{kj}(v)m_{il}(u),
\end{equation}
$i,j,k,l=1,\ldots,n$.
\begin{rem}
Notice that the limit $q\to 1$ corresponds to the Yangian $Y(\gl_n)$.
 Indeed, 
 for $q=1$ the mapping 
$t_{ij}(u)\mapsto m_{ij}(-u)$ defines an isomorphism from 
$Y(\gl_n) $ to the algebra generated by $m_{ij}^{(r)}$ subject to the relation 
\eqref{eqn:Pochhammer_1}.
\end{rem}
Define 
\[
\tilde{m}^n(u):=\frac{(1-u)m(u)}{\mathcal{P}_n^{q}(u)}
\]
Then it can be shown that
\begin{equation}\label{eqn:Pochhammer_3}
(1-q^n u)[\tilde{m}^n_{ij}(qu),\tilde{m}^n_{kl}(v)]-(1-q^n v)[\tilde{m}^n_{ij}(u),\tilde{m}^n_{kl}(qv)]=\tilde{m}^n_{kj}(u)\tilde{m}^n_{il}(v)-\tilde{m}^n_{kj}(v)\tilde{m}^n_{il}(u), 
\end{equation}
with $i,j,k,l=1,\ldots,n$. 
If $|q|<1$ we can take limit $n\to \infty$ and deduce that
\[
\tilde{m}(u):=\frac{(1-u)m(u)}{\mathcal{P}_{\infty}^{q}(u)}
\]
satisfies
\begin{equation}\label{eqn:Pochhammer_4}
[\tilde{m}_{ij}(qu),\tilde{m}_{kl}(v)]-[\tilde{m}_{ij}(u),\tilde{m}_{kl}(qv)]=\tilde{m}_{kj}(u)\tilde{m}_{il}(v)-\tilde{m}_{kj}(v)\tilde{m}_{il}(u), ,\, i,j,k,l=1,\ldots,n.
\end{equation}
\paragraph{Infinite Pochhammer symbol.}

Define
\[
m_{ij}^{(\infty)}:=(\mathcal{P}_{\infty}^{q}(E))_{ij}, \, i,j=1,\ldots,n.
\]
Assume $|q|>1$. Then taking $s\to\infty$ in \eqref{eqn:Pochhammer_1} we get
\begin{equation}\label{eqn:Pochhammer_5}
q^{-r}[m^{(r)}_{ij}-m^{(r+1)}_{ij},m^{(\infty)}_{kl}]=m^{(r)}_{kj}m^{(\infty)}_{il}-m^{(\infty)}_{kj}m^{(r)}_{il}, \,\, i,j,k,l=1,\ldots,n.
\end{equation}
and consequently we get
\begin{equation}\label{eqn:Pochhammer_6}
(1-qu)[m_{ij}(qu),m^{(\infty)}_{kl}]=m_{kj}(u)m^{(\infty)}_{il}-m^{(\infty)}_{kj}m_{il}(u), \,\, i,j,k,l=1,\ldots,n.
\end{equation}
\subsection{Bessel functions.}
In this subsection we give another example based on the family of Bessel functions and deduce defining relations for the corresponding algebra. Properties of this algebra will be studied elsewhere.

Bessel functions $C_{\nu},\nu\in \mathbb{Z}$ satisfy the recurrence relations:
\[
C_{\nu+1}(z)+C_{\nu-1}(z)=\frac{2\nu}{z}C_{\nu}(z),\,\, \nu\in \mathbb{Z}.
\]
Applying the identity \eqref{eqn:GenYangian_0} with $f=C_{\nu+1}(z)+C_{\nu-1}(z)$ and $g=C_{\mu+1}(z)+C_{\mu-1}(z)$, $\nu,\mu\in \mathbb{Z}$ and denoting
\[
c_{ij}^{(\nu)}:=(C_{\nu}(E))_{ij},i,j=1,\ldots,n,\nu\in\mathbb{Z},
\]
\[
B_{ij}(u)=\sum\limits_{\nu\in\mathbb{Z}}c_{ij}^{(\nu)} u^{-\nu},
\]
we get
\[
(\frac{2 v^2}{v^2-1}\partial_v-\frac{2 u^2}{u^2-1}\partial_u)[B_{ij}(v),B_{kl}(u)]=B_{kj}(u)B_{il}(v)-B_{kj}(v)B_{il}(u).
\]

\section{Polarization of the Yangian}\label{sec:Polarization}

\subsection{Polarized ternary relation.}
Let $V$ be an infinite dimensional separable locally convex vector space  in duality $<\cdot,\cdot>$ with $V^{*}$, $\{e_i, \, 1\leq i\}$  be a basis of $V$ and $\{f_i, \, 1\leq i\}$ be the dual basis of $V^{*}$. Denote  $t^{(r)}_{ik}$, $ i,k,r\in\mathbb{N}$ the generators of the Yangian $Y(\gl_{\infty})$ (see p. 295 of the book \cite{Molevbook} for the definition of $Y(\gl_{\infty})$ ).

\begin{defin} 
Let $A\in End(V)$.
Define
\begin{equation}\label{eqn:Gdef}
\widetilde{G}_{ij}(A,u):=\sum\limits_{r=0}^{\infty}\frac{1}{u^r}\sum\limits_{k=1}^{\infty} <Ae_{i},f_{k}> t^{(r)}_{jk},
\end{equation}
\begin{equation}\label{eqn:JSdef}
\widetilde{D}_{ij}(A):=res\, \widetilde{G}_{ij}(A,u)=\sum\limits_{k=1}^{\infty} <Ae_{i},f_{k}> t^{(1)}_{jk},i,j,=1,\ldots,m,u\in\mathbb{C}.
\end{equation}
\end{defin}

From now on we denote $\widetilde{G}(A,u):=\{\widetilde{G}_{ij}(A,u)\}_{i,j\in\mathbb{N}}$, $\widetilde{D}(A):=\{\widetilde{D}_{ij}(A)\}_{i,j\in\mathbb{N}}$, the infinite matrices with operator valued coefficients.

Set
\begin{equation}\label{eqn:JSdef_2}
D(A):= tr\,\,\widetilde{D}(A)=\sum\limits_{i=1}^{\infty}\widetilde{D}_{ii}(A)=\sum\limits_{i,k=1}^{\infty} <Ae_{i},f_{k}> t^{(1)}_{ik}.
\end{equation}

\begin{rem}
Notice that $D$ defined by formula \eqref{eqn:JSdef_2} is the Jordan-Schwinger map as the elements $t_{ij}^{(1)}$  satisfy commutation relations of matrix units $E_{ij}$.
\end{rem}

\begin{prop}[Polarized ternary relation]\label{prop:PTR}
We have
\begin{equation}\label{eqn:PolarizedRTT}
(u-v)[\widetilde{G}_{ij}(A,u),\widetilde{G}_{kl}(B,v)]=\widetilde{G}_{il}(A,u)\widetilde{G}_{kj}(B,v)-\widetilde{G}_{il}(A,v)\widetilde{G}_{kj}(B,u).
\end{equation}
In particular,
\begin{equation}\label{eqn:PolarizedRTT_1}
[\widetilde{D}_{ij}(A),\widetilde{D}_{kl}(B)]=<Ae_{i},f_l>\widetilde{D}_{kj}(B)-<Be_{k},f_j>\widetilde{D}_{il}(A),
\end{equation}
and
\begin{equation}\label{eqn:PolarizedRTT_2}
[D(A),D(B)]=D([B,A]),
\end{equation}
where $A,B\in End(V),i,j,k,l=1,\ldots,u,v\in\mathbb{C}$.
\end{prop}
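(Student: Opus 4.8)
The plan is to derive the identity \eqref{eqn:PolarizedRTT} directly from the defining RTT relation of $Y(\gl_\infty)$, and then obtain \eqref{eqn:PolarizedRTT_1} and \eqref{eqn:PolarizedRTT_2} as formal consequences by extracting residues and taking traces. Recall that the generators of $Y(\gl_\infty)$ satisfy $(u-v)[t_{jk}(u),t_{ql}(v)] = t_{qk}(u)t_{jl}(v) - t_{qk}(v)t_{jl}(u)$, where $t_{jk}(u) = \delta_{jk} + \sum_{r\ge 1} t_{jk}^{(r)}u^{-r}$. The first step is to rewrite $\widetilde{G}_{ij}(A,u)$ in generating-function form: by \eqref{eqn:Gdef} we have $\widetilde{G}_{ij}(A,u) = \sum_{k\ge 1} \langle Ae_i, f_k\rangle\, t_{jk}(u)$, interpreting the $r=0$ term as $\sum_k \langle Ae_i,f_k\rangle\delta_{jk} = \langle Ae_i, f_j\rangle$ (a scalar). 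Then $[\widetilde{G}_{ij}(A,u),\widetilde{G}_{kl}(B,v)]$ expands to $\sum_{p,q} \langle Ae_i,f_p\rangle \langle Be_k,f_q\rangle [t_{jp}(u), t_{lq}(v)]$.

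Next, I would substitute the RTT relation into this double sum. With the index matching $j\leftrightarrow j$, $p\leftrightarrow k$ (in the Yangian relation's notation), $l\leftrightarrow q$, $q\leftrightarrow l$, one gets
\[
(u-v)[\widetilde{G}_{ij}(A,u),\widetilde{G}_{kl}(B,v)] = \sum_{p,q}\langle Ae_i,f_p\rangle\langle Be_k,f_q\rangle\bigl(t_{lp}(u)t_{jq}(v) - t_{lp}(v)t_{jq}(u)\bigr).
\]
Now the sum over $p$ of $\langle Ae_i,f_p\rangle t_{lp}(u)$ is exactly $\widetilde{G}_{il}(A,u)$, and the sum over $q$ of $\langle Be_k,f_q\rangle t_{jq}(v)$ is $\widetilde{G}_{kj}(B,v)$; similarly for the second term with $u,v$ swapped. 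This yields \eqref{eqn:PolarizedRTT} directly. The one point requiring a little care is the bookkeeping of the constant ($r=0$) terms and checking that the factorization of the double sum into two independent single sums is legitimate in the relevant completion — this interchange of summation is the main (mild) obstacle, but it is justified because for fixed $i$ only finitely many $\langle Ae_i, f_p\rangle$ are nonzero when $A$ has finitely supported columns, or formally otherwise.

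For \eqref{eqn:PolarizedRTT_1}, I take the residue at $u=\infty$ (coefficient of $u^{-1}$) on both sides of \eqref{eqn:PolarizedRTT}, or equivalently multiply by appropriate powers and extract; concretely, setting $u\to\infty$ after multiplying through, the left side's leading behavior gives $[\widetilde{D}_{ij}(A), \widetilde{G}_{kl}(B,v)]$ and one more residue in $v$ gives $[\widetilde{D}_{ij}(A),\widetilde{D}_{kl}(B)]$; on the right, using $\widetilde{G}_{il}(A,u) = \langle Ae_i,f_l\rangle + \widetilde{D}_{il}(A)u^{-1} + O(u^{-2})$, the residues pick out $\langle Ae_i,f_l\rangle\widetilde{D}_{kj}(B) - \langle Be_k,f_j\rangle \widetilde{D}_{il}(A)$. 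Finally, \eqref{eqn:PolarizedRTT_2} follows from \eqref{eqn:PolarizedRTT_1} by setting $j=i$, $l=k$, summing over $i$ and $k$, and recognizing $\sum_{i,k}\langle Ae_i,f_k\rangle\widetilde{D}_{ki}(B) = \sum_{i,k}\langle Ae_i,f_k\rangle\sum_m \langle Be_k,f_m\rangle t^{(1)}_{im}$; the two resulting terms combine, via $\sum_k \langle Ae_i,f_k\rangle\langle Be_k, f_m\rangle = \langle BAe_i, f_m\rangle$ and the analogous identity, into $D(BA) - D(AB) = D([B,A])$. I expect all of this to be routine once \eqref{eqn:PolarizedRTT} is established; the only genuine content is the index substitution in the first step.
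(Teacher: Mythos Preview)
Your proposal is correct and follows essentially the same route as the paper: expand $\widetilde{G}_{ij}(A,u)$ as a linear combination of the Yangian generating series $t_{jk}(u)$, apply the defining relation of $Y(\gl_\infty)$, and resum. The only cosmetic difference is that you work directly with the generating-function form $(u-v)[t_{jp}(u),t_{lq}(v)]=t_{lp}(u)t_{jq}(v)-t_{lp}(v)t_{jq}(u)$, whereas the paper passes through the coefficient form \eqref{eqn:Yangian_p}; your derivations of \eqref{eqn:PolarizedRTT_1} via residues and of \eqref{eqn:PolarizedRTT_2} via the trace are likewise what the paper does (it calls the former the ``first order expansion term'').
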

\begin{proof}
Proof follows from the ternary relation for generators of Yangian $Y(\gl_m)$. Indeed,
\begin{eqnarray}
(u-v)[\widetilde{G}_{ij}(A,u),&\widetilde{G}_{kl}(B,v)] = (u-v)\sum\limits_{r,s=0}^{\infty}\frac{1}{u^r v^s}\sum\limits_{p,l=1}^{\infty}<A e_i,f_l> <B e_k,f_p> [t_{jm}^{(r)},t_{lp}^{(s)}]\nonumber\\
&= \sum\limits_{r,s=0}^{\infty}\frac{1}{u^r v^s}\sum\limits_{p,l=1}^{\infty}<A e_i,f_l> <B e_k,f_p>([t_{jm}^{(r+1)},t_{lp}^{(s)}]-[t_{jm}^{(r)},t_{lp}^{(s+1)}])\nonumber\\
&= \sum\limits_{r,s=0}^{\infty}\frac{1}{u^r v^s}\sum\limits_{p,l=1}^{\infty}<A e_i,f_l> <B e_k,f_p>(t^{(r)}_{lm}t^{(s)}_{jp}-t^{(s)}_{lm}t^{(r)}_{jp})\nonumber\\
&= \widetilde{G}_{il}(A,u)\widetilde{G}_{kj}(B,v)-\widetilde{G}_{il}(A,v)\widetilde{G}_{kj}(B,u).\label{eqn:RTT_proof}
\end{eqnarray}
Thus we have shown the relation \eqref{eqn:PolarizedRTT}. Formula \eqref{eqn:PolarizedRTT_1} follows from \eqref{eqn:PolarizedRTT} as the first order expansion term or can be shown directly from \eqref{eqn:Yangian_p}. Relation \eqref{eqn:PolarizedRTT_2}  follows from \eqref{eqn:Yangian_p}.
\end{proof}
\begin{rem}\label{rem:Polarization}
Consider the bilinear form 
\begin{equation}\label{eqn:Def_Epsilon}
\epsilon (A,B):=tr (A\widetilde{D}(B)-\widetilde{D}(AB)), A,B\in End(V).   
\end{equation}
Then $\epsilon$ is symmetric. This property of 
 $\epsilon$ guarantees that 
   \eqref{eqn:PolarizedRTT_2} follows from \eqref{eqn:PolarizedRTT_1}.
 
\end{rem}

\begin{rem}\label{rem:RTTGeneralization}
It is not always possible to polarize the RTT relation as the following example shows. Let $R$ be a trigonometric $R$-matrix of the six vertex model, i.e.
\[
R(u)=\left(
\begin{array}{cccc}
f(u) & 0 & 0 & 0\\
0 & 1 & g(u) & 0\\
0 & g(u) & 1 & 0\\
0 & 0 & 0 & f(u)
\end{array}
\right),
\]
where $f(u)=\frac{\sinh(u+\eta)}{\sinh u}$, $g(u)=\frac{\sinh\eta}{\sinh u}$, $\eta$ is a parameter. Then the RTT relation has  form
\begin{eqnarray}
&[T^{jk}(u),T^{\alpha\beta}(v)]=g(u-v)\left(T^{\alpha k}(v)T^{j\beta}(u)-T^{\alpha k}(u)T^{j\beta}(v)\right)\nonumber\\
&+((f-g)(u-v)-1)\left(\delta_{k\beta}T^{\alpha k}(v)T^{j\beta}(u)-\delta_{j\alpha}T^{\alpha k}(u)T^{j\beta}(v)\right),
\end{eqnarray}
and we  notice that the term before the last one cannot be polarized.
\end{rem}

\begin{rem}
Relation \eqref{eqn:PolarizedRTT} becomes the ternary relation when $A=B$ is fixed.
\end{rem}

The following statement follows immediately from the formula \eqref{eqn:PolarizedRTT}.
\begin{cor}
\begin{equation}\label{eqn:PolarizedRTTpp}
[\widetilde{G}_{ij}(A,u),\widetilde{G}_{kl}(B,u)]=\frac{d\widetilde{G}_{il}(A,u)}{du}\widetilde{G}_{kj}(B,u)-\widetilde{G}_{il}(A,u)\frac{d\widetilde{G}_{kj}(B,u)}{du},
\end{equation}
$A,B\in\mathcal{L}(V,V),i,j,k,l=1,\ldots,u\in\mathbb{C}$.
\end{cor}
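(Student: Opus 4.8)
The plan is to derive \eqref{eqn:PolarizedRTTpp} directly from the polarized ternary relation \eqref{eqn:PolarizedRTT} by a limiting argument in which the second spectral parameter $v$ is sent to $u$. Write the relation \eqref{eqn:PolarizedRTT} in the form
\[
[\widetilde{G}_{ij}(A,u),\widetilde{G}_{kl}(B,v)]=\frac{\widetilde{G}_{il}(A,u)\widetilde{G}_{kj}(B,v)-\widetilde{G}_{il}(A,v)\widetilde{G}_{kj}(B,u)}{u-v},
\]
valid as an identity of formal power series in $u^{-1}$ and $v^{-1}$ (the left side has no pole at $u=v$, so the right side is in fact regular there). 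The idea is that the right-hand side is exactly a divided difference: setting
\[
F(u,v):=\widetilde{G}_{il}(A,u)\widetilde{G}_{kj}(B,v),
\]
the numerator is $F(u,v)-F(v,u)$, and dividing by $u-v$ and then letting $v\to u$ produces $\partial_u\bigl(\widetilde{G}_{il}(A,u)\bigr)\widetilde{G}_{kj}(B,u)-\widetilde{G}_{il}(A,u)\,\partial_u\bigl(\widetilde{G}_{kj}(B,u)\bigr)$, which is the claimed right-hand side of \eqref{eqn:PolarizedRTTpp}. On the left, $[\widetilde{G}_{ij}(A,u),\widetilde{G}_{kl}(B,v)]$ is visibly continuous in $v$ as a formal series and at $v=u$ becomes $[\widetilde{G}_{ij}(A,u),\widetilde{G}_{kl}(B,u)]$.

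In more detail, I would argue coefficient-wise. Expand both sides of \eqref{eqn:PolarizedRTT} in powers of $u^{-1}v^{-1}$ using $\widetilde{G}_{ij}(A,u)=\sum_{r\ge 0}u^{-r}g^{(r)}_{ij}(A)$ where $g^{(r)}_{ij}(A):=\sum_k\langle Ae_i,f_k\rangle t^{(r)}_{jk}$. The factor $(u-v)^{-1}$ on the right is handled by the algebraic identity
\[
\frac{u^{-r}-v^{-r}}{u-v}=-\sum_{p=1}^{r-1}u^{-p}v^{-(r-p)}\quad (r\ge 1),
\]
applied to the expansion of the numerator; this expresses each coefficient of the right-hand side as a finite sum and exhibits it manifestly as the $v\to u$ specialization of the divided difference. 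Matching the coefficient of $u^{-a}v^{-b}$ on both sides of \eqref{eqn:PolarizedRTT} gives the "off-diagonal" ternary relations among the $g^{(r)}$; summing these with the weight $u^{-a}v^{-b}$ along the diagonal $v=u$ — equivalently, substituting the formal identity $\partial_u(u^{-r})=-r\,u^{-r-1}$ — yields precisely the coefficients of \eqref{eqn:PolarizedRTTpp}. Since everything is a formal power series identity with each coefficient a finite sum, no convergence issue arises; the manipulation is purely combinatorial.

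The only point requiring care — and the step I would flag as the main (mild) obstacle — is making the "$v\to u$" substitution rigorous at the level of formal series, since $\widetilde{G}_{kj}(B,v)$ is a series in $v^{-1}$ and differentiation with respect to $u$ must be interpreted formally. The clean way to avoid any analytic hand-waving is to phrase the whole argument as an identity of coefficients: prove that for every pair $(a,b)$ the coefficient of $u^{-a}v^{-b}$ in \eqref{eqn:PolarizedRTT} is equivalent to a statement about the $g^{(r)}_{\bullet\bullet}$, then observe that the coefficient of $u^{-m}$ in the claimed relation \eqref{eqn:PolarizedRTTpp} — namely
\[
[g^{(p)}_{ij}(A),g^{(q)}_{kl}(B)]\text{-type sums with }p+q=m
\]
together with the derivative weights $-p$ and $-q$ — is obtained by collecting those same coefficient identities along the line $a+b=m+1$. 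I expect this bookkeeping to go through without surprises, so the corollary indeed "follows immediately" as stated, once the divided-difference identity above is recorded.
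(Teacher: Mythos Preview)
Your proposal is correct and is exactly the approach the paper has in mind: the corollary is stated as following ``immediately'' from \eqref{eqn:PolarizedRTT}, and the only natural way to extract it is the $v\to u$ divided-difference limit you describe. Your additional care about interpreting the limit coefficient-wise for formal series is a welcome elaboration, but there is no alternative argument to compare against.
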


\subsection{Ternary Lie algebra.}

Let $\mathcal{A}$ be a Lie algebra,
 $\psi:\mathcal{A}\to End(V)$  a   representation of $\mathcal{A}$.  Using notation from the previous section we set
\begin{equation}
\widetilde{G}_{ij}(a,u):=\widetilde{G}_{ij}(\psi(a),u),
\end{equation}
\begin{equation}
\widetilde{D}_{ij}(a):=\widetilde{D}_{ij}(\psi(a)),\label{eqn:Tilde_D}
\end{equation}
\begin{equation}
D(a):=D(\psi(a)),i,j=1\ldots,a\in \mathcal{A}.
\end{equation}
Then Proposition~\ref{prop:PTR} gives us 
\begin{equation}\label{eqn:PolarizedRTTa}
(u-v)[\widetilde{G}_{ij}(a,u),\widetilde{G}_{kl}(b,v)]=\widetilde{G}_{il}(a,u)\widetilde{G}_{kj}(b,v)-\widetilde{G}_{il}(a,v)\widetilde{G}_{kj}(b,u).
\end{equation}
In particular,
\begin{equation}\label{eqn:PolarizedRTT_1a}
[\widetilde{D}_{ij}(a),\widetilde{D}_{kl}(b)]=<\psi(a)e_{i},f_l>\widetilde{D}_{kj}(b)-<\psi(b)e_{k},f_j>\widetilde{D}_{il}(a),
\end{equation}
and
\begin{equation}\label{eqn:PolarizedRTT_2a}
[D(a),D(b)]=D([b,a]),
\end{equation}
where
 $a,b\in\mathcal{A},i,j,k,l\in\mathbb{N}$ and $u,v\in\mathbb{C}$.

\begin{defin}\label{def:TernaryAlg}
For a Lie algebra $\mathcal{A}$, an auxiliary linear space $\mathcal{W}$ and a collection of linear maps 
$$\{x_{ij}\in L(\mathcal{A}, End(\mathcal{W})), \,\, i,j\in\mathbb{N}\}$$
define the following subspace of $End(\mathcal{W})$:
\begin{equation}
\mathcal{A}^{tern}(\mathcal{W}, \{x_{ij}\}):=span_{\mathbb C}\{x_{ij}(a), a\in \mathcal{A}, i,j\in\mathbb{N}\}.
\end{equation}
\end{defin}

One can use the formula \eqref{eqn:PolarizedRTT_1a} 
to define the Lie algebra structure on $\mathcal{A}^{tern}(\mathcal{W}, \{x_{ij}\})$. Namely, we have the following proposition which can be easily checked.
\begin{prop} Let $\psi:\mathcal{A}\to End(V)$ be any representation of $\mathcal{A}$.
Then the space $\mathcal{A}^{tern}(\mathcal{W}, \{x_{ij}\})$ is a Lie algebra with the Lie bracket defined as follows:
\begin{equation*}
[x_{ij}(a),x_{kl}(b)]=<\psi(a)e_{i},f_l>x_{kj}(b)-<\psi(b)e_{k},f_j>x_{il}(a),\, i,j,k,l\in\mathbb{N}.
\end{equation*}
\end{prop}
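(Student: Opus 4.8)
The plan is to verify the three Lie-algebra axioms for the proposed bracket. Bilinearity is immediate from the linearity of each $x_{ij}$, the linearity of $\psi$, and the bilinearity of $\langle\cdot,\cdot\rangle$. Antisymmetry is equally transparent: interchanging the two arguments of the bracket effects the substitution $(i,j,a)\leftrightarrow(k,l,b)$ on the right-hand side, under which the two summands are exchanged with the minus sign staying in place, so the expression becomes its own negative. Concerning well-definedness on the subspace $\mathcal{A}^{tern}(\mathcal{W},\{x_{ij}\})$: it is harmless, and cleanest, to regard the $x_{ij}(a)$ as linearly independent while checking the axioms; this also covers the situation relevant in the sequel, $x_{ij}=\widetilde{D}_{ij}(\psi(\cdot))$, in which, by \eqref{eqn:PolarizedRTT_1a}, the bracket is anyway the restriction of the commutator of $Y(\gl_{\infty})$ to a subspace closed under it. Thus the sole content of the proposition is the Jacobi identity.

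For the Jacobi identity I would expand directly. Applying the defining formula twice, $[[x_{ij}(a),x_{kl}(b)],x_{pq}(c)]$ is a sum of four terms, each a product of two of the scalars $\langle\psi(\cdot)e_{\cdot},f_{\cdot}\rangle$ times one symbol $x_{rs}(d)$ with $d\in\{a,b,c\}$; summing over the three cyclic permutations of the triple $\bigl((i,j,a),(k,l,b),(p,q,c)\bigr)$ yields twelve terms in all. Sorting them by the output symbol $x_{rs}(d)$ to which they contribute, one checks that they cancel in pairs — concretely, the two ``positive'' terms produced by each double bracket are matched by ``negative'' terms of the next double bracket in the cyclic order. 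This is the ``easily checked'' computation: nothing beyond the defining formula is used, and the only place where genuine care is needed — indeed the main obstacle — is the bookkeeping of the six index slots and the matrix entries through the cyclic relabellings, where a stray sign or a transposed index is easy to introduce.

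As a conceptual explanation of why the bracket must close up, and as an independent check on the computation, observe that the defining relation is precisely \eqref{eqn:PolarizedRTT_1a} for the elements $\widetilde{D}_{ij}(\psi(a))\in Y(\gl_{\infty})$: that relation says their linear span is closed under the commutator of $Y(\gl_{\infty})$, with the commutator given by exactly the stated formula, so there the formula is a Lie bracket and Jacobi holds automatically. Since the formula — hence the entire Jacobi calculation — involves $\mathcal{W}$ and the maps $x_{ij}$ only through the numbers $\langle\psi(a)e_i,f_l\rangle$, this forces the general Jacobi identity as soon as the pertinent $\widetilde{D}_{rs}(\psi(d))$ are linearly independent in $Y(\gl_{\infty})$; if they are not for the given $\psi$, one enlarges $\psi$ to $\psi\oplus\psi'$ with $\psi'$ a representation chosen so that the vectors $(\psi(d)e_i)\oplus(\psi'(d)e_i)$ become linearly independent, which leaves all the scalars indexed by the original basis unchanged and thus does not affect the identity being checked.
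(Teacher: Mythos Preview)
Your proof is correct and is exactly what the paper intends: the paper offers no argument beyond ``which can be easily checked'', and your direct verification of bilinearity, antisymmetry, and the Jacobi identity (the twelve terms cancelling in six pairs) is precisely that check. Your supplementary conceptual argument via the realization $\widetilde{D}_{ij}(\psi(a))\in Y(\gl_\infty)$ is a nice addition that mirrors the paper's own motivation of the bracket through \eqref{eqn:PolarizedRTT_1a}.
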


We will denote the resulting Lie algebra by $\mathcal{A}^{tern}(\mathcal{W}, \{x_{ij}\}, \psi)$ and call it the ternary Lie algebra of $\mathcal A$ determined by $\mathcal{W}$, $\{x_{ij}\}$ and $\psi$. We fix now $\mathcal{W}$, $\{x_{ij}\}$, $\psi$ and simply denote the corresponding ternary Lie algebra by $\mathcal{A}^{tern}$. 
Define the bilinear form $\epsilon:\mathcal{A}\times \mathcal{A}\to \mathcal{A}^{tern}$ as follows: 
$$\epsilon(a,b):=\sum\limits_{k,l=1}^{\infty}<\psi(a)e_{k},f_l> x_{lk}(b)-\sum\limits_{m=1}^{\infty} x_{mm}(ab),$$
and the trace map $\tr(x):\mathcal{A}\to End(\mathcal{W})$ as follows:
$$a\mapsto \tr(x)(a):=(\sum\limits_{l=1}^{\infty} x_{ll})(a).$$

\begin{prop}
Assume that the bilinear form $\epsilon$
 is symmetric. Then $-\tr(x)$ is a homomorphism of Lie algebras, that is $\tr(x)(\mathcal{A})$ is a Lie subalgebra of $\mathcal{A}^{tern}$. 
\end{prop}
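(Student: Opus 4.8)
The plan is to compute the commutator $[\tr(x)(a),\tr(x)(b)]$ directly from the defining Lie bracket of $\mathcal{A}^{tern}$ and show that, under the symmetry hypothesis on $\epsilon$, it equals $\tr(x)([b,a])=-\tr(x)([a,b])$, which is exactly the statement that $-\tr(x)$ is a Lie algebra homomorphism. First I would expand
\[
[\tr(x)(a),\tr(x)(b)]=\sum\limits_{i,k=1}^{\infty}[x_{ii}(a),x_{kk}(b)]
=\sum\limits_{i,k=1}^{\infty}\left(<\psi(a)e_i,f_k>x_{ki}(b)-<\psi(b)e_k,f_i>x_{ik}(a)\right),
\]
using the bracket from the previous proposition with $(j,l)=(i,k)$. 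The first sum is $\sum_{k}\left(\sum_i <\psi(a)e_i,f_k> x_{ki}\right)(b)$ and the second is $\sum_i\left(\sum_k <\psi(b)e_k,f_i> x_{ik}\right)(a)$; these are precisely the ``matrix-product'' terms that appear in the definition of $\epsilon$.

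The key step is to recognize that $\sum\limits_{i,k}<\psi(a)e_i,f_k> x_{ki}(b)=\tr(x)(ab)+\epsilon(b,a)$ and, symmetrically, $\sum\limits_{i,k}<\psi(b)e_k,f_i> x_{ik}(a)=\tr(x)(ba)+\epsilon(a,b)$, directly from the definition
$\epsilon(a,b)=\sum_{k,l}<\psi(a)e_k,f_l> x_{lk}(b)-\sum_m x_{mm}(ab)$ (with suitable relabeling of summation indices). Substituting these into the expanded commutator gives
\[
[\tr(x)(a),\tr(x)(b)]=\tr(x)(ab)+\epsilon(b,a)-\tr(x)(ba)-\epsilon(a,b)
=\tr(x)([a,b])+\big(\epsilon(b,a)-\epsilon(a,b)\big),
\]
since $\tr(x)$ is linear and $ab-ba=[a,b]$ in the associative algebra $End(V)$ through which $\psi$ factors (here $ab$ denotes $\psi(a)\psi(b)$, matching the usage of $\epsilon$ in Remark~\ref{rem:Polarization}). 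By the hypothesis that $\epsilon$ is symmetric, $\epsilon(b,a)-\epsilon(a,b)=0$, so $[\tr(x)(a),\tr(x)(b)]=\tr(x)([a,b])=-\tr(x)([b,a])$, i.e. $-\tr(x)$ sends $[a,b]$ to $[-\tr(x)(a),-\tr(x)(b)]$, proving it is a homomorphism; in particular its image is a Lie subalgebra of $\mathcal{A}^{tern}$.

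The main obstacle is purely bookkeeping: keeping the four index pairs straight when specializing the bracket of the preceding proposition (which involves indices $i,j,k,l$) to the diagonal case $j=i$, $l=k$, and correctly matching the resulting double sums to the two terms in the definition of $\epsilon$ after relabeling. One should also be careful that the infinite sums over the basis of $V$ converge in the relevant (weak/locally convex) sense — this is implicit in the setup of Section~\ref{sec:Polarization}, where $\widetilde{G}$ and $\widetilde{D}$ are already defined by such sums, so no new analytic input is needed and the manipulation is formal in the same sense as equation~\eqref{eqn:PolarizedRTT_2a}. No genuinely hard step is expected; the content is entirely in the observation, already flagged in Remark~\ref{rem:Polarization}, that symmetry of $\epsilon$ is exactly what forces the anti-symmetric correction term to vanish.
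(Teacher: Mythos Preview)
The paper gives no proof of this proposition, and your direct computation is precisely the intended argument. Two small bookkeeping slips are worth flagging. First, in matching the two sums to $\epsilon$ you have the arguments swapped: from the paper's definition $\epsilon(a,b)=\sum_{k,l}\langle\psi(a)e_k,f_l\rangle x_{lk}(b)-\tr(x)(ab)$, relabelling gives $\sum_{i,k}\langle\psi(a)e_i,f_k\rangle x_{ki}(b)=\epsilon(a,b)+\tr(x)(ab)$ (not $\epsilon(b,a)+\tr(x)(ab)$), and symmetrically for the second sum; under the symmetry hypothesis this is of course harmless. Second, your final sentence does not quite follow: from $[\tr(x)(a),\tr(x)(b)]=\tr(x)([a,b])$ one concludes that $\tr(x)$ itself is a Lie homomorphism, whereas $[-\tr(x)(a),-\tr(x)(b)]=[\tr(x)(a),\tr(x)(b)]$, so $-\tr(x)$ would be an \emph{anti}-homomorphism. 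To obtain $-\tr(x)$ as a homomorphism (matching \eqref{eqn:PolarizedRTT_2a}) you need $\tr(x)([b,a])$ on the right; the sign hinges on how one interprets the paper's (not fully defined) expression $x_{mm}(ab)$ for Lie-algebra elements. Either way, the substantive conclusion---that $\tr(x)(\mathcal{A})$ is closed under the bracket and hence a Lie subalgebra of $\mathcal{A}^{tern}$---is established by your computation.
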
 
 
 If $\tr(x)$ is injective then 
 we have $$\mathcal{A}\cong\tr(\mathcal{A})\subset \mathcal{A}^{tern}.$$

% \begin{rem}
% Condition \eqref{eqn:Epsilon_symmetry_2} guarantees that the map $\tr(x)$ is an algebraic homomorphism and, consequently, $\mathcal{A}\cong\tr(\mathcal{A})\subset \mathcal{A}^{tern}$.
% \end{rem}
%\begin{rem}
 %   Condition that the map $\tr$ has trivial kernel is necessary to exclude trivial example $x_{\cdot\cdot}=0$.
%\end{rem}
\begin{cor}
    Let $\tilde{D}$ is given by the formula \eqref{eqn:Tilde_D}.  Then the map: $U(\mathcal{A}^{tern})\to Y(\gl_n)$ defined by $$x_{ij}(a)\mapsto \tilde{D}_{ij}(a), \, a\in \mathcal{A}, \,i,j\in\mathbb{N},$$  is a homomorphism. In addition, if $\psi$ is faithful, then this map is an embedding.
\end{cor}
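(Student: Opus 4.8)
The plan is to verify directly that the assignment $x_{ij}(a)\mapsto \widetilde{D}_{ij}(a)$ respects the defining relations of $U(\mathcal{A}^{tern})$, and then invoke the universal property of the enveloping algebra. Concretely, $\mathcal{A}^{tern}=\mathcal{A}^{tern}(\mathcal{W},\{x_{ij}\},\psi)$ is the Lie algebra spanned by the symbols $x_{ij}(a)$ with bracket
\[
[x_{ij}(a),x_{kl}(b)]=\langle\psi(a)e_i,f_l\rangle\, x_{kj}(b)-\langle\psi(b)e_k,f_j\rangle\, x_{il}(a),
\]
so $U(\mathcal{A}^{tern})$ is generated by these symbols subject to exactly these commutator relations (plus the relation expressing the linearity of $a\mapsto x_{ij}(a)$). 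On the target side, formula \eqref{eqn:PolarizedRTT_1a} of Proposition~\ref{prop:PTR} — applied to the representation $\psi$, i.e. with $\widetilde{D}_{ij}(a)=\widetilde{D}_{ij}(\psi(a))$ — states that the elements $\widetilde{D}_{ij}(a)\in Y(\gl_n)$ satisfy
\[
[\widetilde{D}_{ij}(a),\widetilde{D}_{kl}(b)]=\langle\psi(a)e_i,f_l\rangle\,\widetilde{D}_{kj}(b)-\langle\psi(b)e_k,f_j\rangle\,\widetilde{D}_{il}(a),
\]
which is the very same relation, and linearity of $a\mapsto\widetilde{D}_{ij}(a)$ is immediate from \eqref{eqn:JSdef}. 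Hence the linear map $\mathcal{A}^{tern}\to Y(\gl_n)$, $x_{ij}(a)\mapsto\widetilde{D}_{ij}(a)$, is a Lie algebra homomorphism, and by the universal property of $U(-)$ it extends uniquely to an algebra homomorphism $U(\mathcal{A}^{tern})\to Y(\gl_n)$.

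For the embedding statement, assume $\psi$ is faithful. First I would argue that the Lie algebra map $\mathcal{A}^{tern}\to Y(\gl_n)$ is injective. A general element of $\mathcal{A}^{tern}$ is a finite sum $\sum_{i,j}x_{ij}(a_{ij})$; its image is $\sum_{i,j}\widetilde{D}_{ij}(a_{ij})=\sum_{i,j}\sum_k\langle\psi(a_{ij})e_i,f_k\rangle\,t^{(1)}_{jk}$. Since the $t^{(1)}_{jk}$ are linearly independent in $Y(\gl_\infty)$ (they span a copy of $\gl_\infty$, by the PBW theorem for the Yangian, see \cite{Molevbook}), the image vanishes iff $\sum_i\langle\psi(a_{ij})e_i,f_k\rangle=0$ for all $j,k$, i.e. iff $\sum_i\psi(a_{ij})e_i=0$ in $V$ for every $j$; faithfulness of $\psi$ together with the independence of the $e_i$ then forces each $a_{ij}=0$ (note $x_{ij}(a)$ as an element of $\mathcal{A}^{tern}\subset End(\mathcal{W})$ is likewise zero only when the corresponding combination vanishes, so we should fix a representative and the argument matches the definition of $\mathcal{A}^{tern}$ as a span). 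Once the Lie algebra embedding is established, injectivity of the induced map on enveloping algebras follows from the PBW theorem: a basis of $\mathcal{A}^{tern}$ maps to algebraically independent-in-the-PBW-sense elements of $Y(\gl_n)$, because $Y(\gl_n)\supset Y(\gl_\infty)$ and the $\widetilde{D}_{ij}(a)$ lie in the degree-$\le 1$ filtered piece whose associated graded is a polynomial algebra on $\gl_\infty$; a nonzero element of $U(\mathcal{A}^{tern})$ has a nonzero top-degree symbol, which maps to a nonzero symbol in $\mathrm{gr}\,Y(\gl_n)$, hence the element itself has nonzero image.

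The main obstacle — or rather the point requiring care — is the injectivity at the Lie-algebra level and its promotion to $U(-)$. The subtlety is that $\mathcal{A}^{tern}$ is \emph{defined} as a span of operators in $End(\mathcal{W})$, so there may be linear relations among the $x_{ij}(a)$ independent of $\psi$; one must check that the relations forced by membership in $\mathcal{A}^{tern}$ are exactly those preserved by $\widetilde{D}$, which is where faithfulness of $\psi$ enters decisively. If $\psi$ is not faithful the map need only be a homomorphism, consistent with the statement. A secondary technical point is handling the infinite index set $\mathbb{N}$: all sums occurring are finite (each element of $\mathcal{A}^{tern}$ and each $\widetilde{D}_{ij}(a)$ involves only finitely many nonzero terms, since $\psi(a)e_i$ has finite support in the basis $\{f_k\}$ — or one restricts to the appropriate completion as in the setup of Proposition~\ref{prop:PTR}), so no convergence issue arises, but this should be remarked. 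I do not expect the enveloping-algebra step itself to pose difficulty beyond a standard PBW/filtration argument.
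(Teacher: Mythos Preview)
The homomorphism part of your argument is correct and is exactly the proof the paper leaves implicit: identity \eqref{eqn:PolarizedRTT_1a} says the $\widetilde{D}_{ij}(a)$ obey the defining bracket of $\mathcal{A}^{tern}$, linearity in $a$ is clear, and the universal property of the enveloping algebra finishes.

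The embedding argument has a genuine gap. From $\sum_{i}\psi(a_{ij})e_i=0$ for each $j$ you conclude that ``faithfulness of $\psi$ together with the independence of the $e_i$ then forces each $a_{ij}=0$''. This inference is invalid: faithfulness only gives $\psi(a)=0\Rightarrow a=0$; it says nothing about a relation $\sum_i\psi(a_i)e_i=0$ with varying $a_i$. For example, with $\mathcal{A}=\mathfrak{sl}_2$ in its standard (faithful) representation on $\mathbb{C}^2$ one has $\psi(H)e_1+\psi(-E)e_2=e_1-e_1=0$ while neither $H$ nor $E$ is zero. What you actually need is not $a_{ij}=0$ individually, but that the element $\sum_{i,j}x_{ij}(a_{ij})$ vanishes in $\mathcal{A}^{tern}\subset End(\mathcal{W})$; you acknowledge this in your parenthetical, yet the argument is never completed. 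Concretely, injectivity requires that every linear relation among the $\widetilde{D}_{ij}(a)$ in $Y(\gl_n)$ already holds among the $x_{ij}(a)$ in $End(\mathcal{W})$, and that depends on the particular choice of $\mathcal{W}$ and $\{x_{ij}\}$, which the paper has left arbitrary. The same issue in the reverse direction already threatens \emph{well-definedness} of the map (a relation $\sum x_{ij}(a_{ij})=0$ in $End(\mathcal{W})$ must force $\sum\widetilde{D}_{ij}(a_{ij})=0$), and your homomorphism paragraph does not address it either. Your final PBW/filtration step is fine once a Lie-algebra embedding is secured, but that is precisely the step not yet established.
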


%{\red Define $<,>$ here and in the beginning of section}
\begin{example}\label{ex:Ternary}
Let $\psi=\ad$, $V=\mathcal{A}$, identified as vector space with $\mathbb{C}^n$ with $<\cdot,\cdot>=(\cdot,\cdot)_{\mathbb{C}^n}$, $\mathcal{W}=\mathbb{C}[x]$.
Put 
\begin{equation*}
x_{ij}(a):=\sum\limits_{k=1}^n <[a,e_{i}],e_{k}> x_j\partial_k,i,j=1,\ldots,n,a\in\mathcal{A}.
\end{equation*}
%\item $(sl(2)\oplus A_1)^{tern}=(so(3)\oplus A_1)^{tern}=(A_{4,2}^b)^{tern}=sl(3)\oplus A_{4,5}^{1,1,1}$.(\cite{PBNL2003})

Using such data we get the following  ternary extensions of algebras of dimensions $2$, $3$ and $4$ (modulo their center):
\begin{trivlist}
\item $A_{2,1}^{tern}=A_{2,1}$.

\item $(A_{2,1}\oplus A_1)^{tern}=A_{3,1}^{tern}=A_{3,3}$.

\item $A_{3,2}^{tern}=A_{3,3}^{tern}=(A_{3,4}^a)^{tern}=(A_{3,5}^b)^{tern}=sl(2)\oplus A_{3,3}$.

\item $sl(2)^{tern}=so(3)^{tern}=gl(3)$.

\item $(sl(2)\oplus A_1)^{tern}=(so(3)\oplus A_1)^{tern}=(A_{4,2}^b)^{tern}=sl(3)\oplus A_{4,5}^{1,1,1}$.
\item $A_{4,1}^{tern}=sl(2)\oplus A_{5,7}^{111}=sl(2)\oplus A_{5,13}^{110}$.
%\item ....
%{\red better say explicitely what is $A_1$ and $A_{2,1}$}
\end{trivlist}
Here we using the nomenclature from \cite{PBNL2003} and \cite{PSWZ76}. In particular, $A_1$ is a $1$-dimensional Lie algebra and $A_{2,1}$ is a $2$-dimensional solvable Lie algebra. 
\end{example}

\subsection{Ternary Yangian}
  Using notation above we introduce the associative algebra %{\red Mi uzhe obsuzhdali, chto ryadi po $u^{-1}$ ne yavlyayutsya elementami Yangiana. eto ego generating series. A ternary Yangian operedelen cherez znacheniya generating series. Eto verno? Ved' podstavlyaya znacheniya $u$ ryad mozhet rashoditsya. Eto opredelenie nekorrektno} 
 $Yt(\mathcal{A})$ as follows:
 \begin{defin}\label{def:TernaryYangian}
For a Lie algebra $\mathcal{A}$, an auxiliary linear space $W$,  representation $\psi:\mathcal{A}\to End(V)$ and a collection of linear maps 
$$\{x_{ij}\in L(\mathcal{A}, End(\mathcal{W})[[u^{-1}]]), \,\, i,j\in\mathbb{N}\}$$
 such that
$$x_{ij}(a)=\psi(a)_{ij}Id_{\mathcal{W}}+\sum\limits_{l=1}^{\infty}X_{ij}^{(l)}(a) u^{-l}, $$
define the following subspace of $End(\mathcal{W})$:
\begin{equation}
Yt(\mathcal{A})(\mathcal{W}, \{x_{ij}\}):=span_{\mathbb C}\{X_{ij}^{(k)}(a), a\in \mathcal{A}, i,j\in\mathbb{N}, k\in \mathbb{N}\}.
\end{equation}
where $X_{ij}^{(k)}\in L(\mathcal{A}, End(\mathcal{W}))$, $k\in \mathbb{N}$, $i,j\in\mathbb{N}$ (we exclude $0$-order coefficient $X_{ij}^{(0)}=\psi(a)_{ij}$).
\end{defin}

One can use the formula \eqref{eqn:PolarizedRTTa} to define the associative algebra structure on $Yt(\mathcal{A})(\mathcal{W}, \{x_{ij}\})$:
\begin{equation}\label{eqn:PolarizedRTTa_X}
(u-v)[x_{ij}(a,u),x_{kl}(b,v)]=x_{il}(a,u)x_{kj}(b,v)-x_{il}(a,v)x_{kj}(b,u),\,i,j,k,l\in\mathbb{N}.
\end{equation}

%  an algebra defined by the set of generating functions $\{\widetilde{X}_{ij}(a,\cdot),a\in\mathcal{A},i,j=1\ldots,n\}$, $\widetilde{X}_{ij}(a,\cdot)\in L(A,End(W)[[u^{-1}]])$ with commutation relations given by identities \eqref{eqn:PolarizedRTTa}.

% \begin{equation}
% Yt(\mathcal{A}):=span_{\mathbb C}\{\widetilde{G}_{ij}(a,u),a\in\mathcal{A},u\in\mathbb{C},i,j=1\ldots,n\},
% \end{equation} {\red $u$ is a number or variable? not defined if $u=0$}
% with defining relations given by the polarized ternary relation \eqref{eqn:PolarizedRTTa}. 
We will call this algebra \emph{the ternary Yangian} determined by $\mathcal{A}$, $x_{ij}$, $\mathcal{W}$ and $\psi$. When this data is fixed we will simply write $Yt(\mathcal{A})$. 
% The ternary Lie algebra $\mathcal{A}^{tern}$ serves as the first order term for $Yt(\mathcal{A})$.
%Notice that $\widetilde{G}_{ij}(a,u)$ is  $0$ if $\psi(a)e_i=0$. 
Since $\mathcal{A}$ is finite dimensional, we can choose a finite number of linearly independent generating functions $x_{ij}(a_k,u)$, $i,j\in\mathbb{N}$, $1\leq k\leq \dim(\mathcal{A})$ of algebra $Yt(\mathcal{A})$.

The ternary Lie algebra $\mathcal{A}^{tern}=\mathcal{A}^{tern}(\mathcal{W}, \{x_{ij}\}, \psi)$ serves as the first order term for $Yt(\mathcal{A})$.
\begin{prop}\label{prop:Evaluation}
  The map  $\rho:Yt(\mathcal{A})\to U(\mathcal{A}^{tern})$ defined by $$x_{ij}(a,u)\mapsto (\psi( a))_{ij}+\frac{1}{u}\widetilde{D}_{ij}(a)$$ is a surjective homomorphism (here the mapping understood as the correspondence between formal series in $u$). %{\red why dot and not $u$? Also, the map is not defined for $u=0$} 
\end{prop}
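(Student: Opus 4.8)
\textbf{Proof proposal for Proposition~\ref{prop:Evaluation}.}
The plan is to verify directly that the assignment $\rho(x_{ij}(a,u)) = (\psi(a))_{ij} + \frac{1}{u}\widetilde{D}_{ij}(a)$ respects the defining relation \eqref{eqn:PolarizedRTTa_X} of $Yt(\mathcal{A})$, and then to observe surjectivity from the construction of $\mathcal{A}^{tern}$. First I would substitute the candidate images into the left-hand side of \eqref{eqn:PolarizedRTTa_X}, that is, compute
\[
(u-v)\Bigl[(\psi(a))_{ij}+\tfrac{1}{u}\widetilde{D}_{ij}(a),\,(\psi(b))_{kl}+\tfrac{1}{v}\widetilde{D}_{kl}(b)\Bigr].
\]
Since the scalars $(\psi(a))_{ij}$ are central in $U(\mathcal{A}^{tern})$, the bracket reduces to $(u-v)\cdot\frac{1}{uv}[\widetilde{D}_{ij}(a),\widetilde{D}_{kl}(b)] = \bigl(\tfrac{1}{v}-\tfrac{1}{u}\bigr)[\widetilde{D}_{ij}(a),\widetilde{D}_{kl}(b)]$, and now I would invoke the already-established relation \eqref{eqn:PolarizedRTT_1a} to rewrite $[\widetilde{D}_{ij}(a),\widetilde{D}_{kl}(b)]$ as $\langle\psi(a)e_i,f_l\rangle\,\widetilde{D}_{kj}(b) - \langle\psi(b)e_k,f_j\rangle\,\widetilde{D}_{il}(a)$.

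Next I would expand the right-hand side $x_{il}(a,u)x_{kj}(b,v) - x_{il}(a,v)x_{kj}(b,u)$ under $\rho$. Writing each factor as scalar-plus-$\frac{1}{u}$-correction and multiplying out, the constant terms $(\psi(a))_{il}(\psi(b))_{kj}$ cancel between the two products; the $O(u^{-1}v^{-1})$ terms also cancel by the same symmetry; what survives is exactly
\[
\Bigl(\tfrac{1}{u}-\tfrac{1}{v}\Bigr)(\psi(b))_{kj}\,\widetilde{D}_{il}(a) \;+\; \Bigl(\tfrac{1}{v}-\tfrac{1}{u}\Bigr)(\psi(a))_{il}\,\widetilde{D}_{kj}(b),
\]
using once more that the scalars are central. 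Noting $(\psi(a))_{il} = \langle\psi(a)e_i,f_l\rangle$ and $(\psi(b))_{kj} = \langle\psi(b)e_k,f_j\rangle$, this matches the left-hand side computed above after multiplying \eqref{eqn:PolarizedRTT_1a} by $\bigl(\tfrac{1}{v}-\tfrac{1}{u}\bigr)$. Hence the relation is preserved and $\rho$ extends to a well-defined algebra homomorphism. For surjectivity, the coefficient of $u^{-1}$ in $\rho(x_{ij}(a,u))$ is precisely $\widetilde{D}_{ij}(a)$, and by Definition~\ref{def:TernaryAlg} the elements $\widetilde{D}_{ij}(a) = x_{ij}(a)$ span $\mathcal{A}^{tern}$ and therefore generate $U(\mathcal{A}^{tern})$; so $\rho$ hits a generating set and is onto.

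The main obstacle I anticipate is purely bookkeeping rather than conceptual: keeping track of which terms are genuinely central (the zeroth-order coefficients $(\psi(a))_{ij}$ living in $\mathcal{W}$-scalars) versus which live in the noncommutative part $U(\mathcal{A}^{tern})$, and being careful that the formal-series identity \eqref{eqn:PolarizedRTTa_X} is being matched coefficient-by-coefficient in $u^{-1}$ and $v^{-1}$ — in particular that there is no $O(u^0)$ obstruction. One should also check that $\rho$ is compatible with the identification of $Yt(\mathcal{A})$ as a quotient of the free algebra on the $X_{ij}^{(l)}(a)$ by relations \eqref{eqn:PolarizedRTTa_X}, i.e. that \eqref{eqn:PolarizedRTTa_X} really are \emph{all} the relations, so that a map defined on generators respecting these relations descends; this is guaranteed by Definition~\ref{def:TernaryYangian}. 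A minor point worth a sentence is that $\widetilde{D}_{ij}(a)$ is linear in $a$, so $\rho$ is well-defined on the linear generators $x_{ij}(a,u)$ before one even checks the quadratic relations.
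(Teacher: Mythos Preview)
Your proposal is correct and follows exactly the approach the paper indicates: the paper's proof merely says ``it is enough to notice that the identity \eqref{eqn:PolarizedRTTa_X} is conserved under $\rho$'' and refers to the analogous argument for $Y(\gl_n)$, while you have carried out this verification in full detail, including the surjectivity argument which the paper leaves implicit. Your computation is sound and there is nothing to correct.
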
 
\begin{proof}
    Proof is analogous to the case of Yangian $Y(\gl_n)$. It is enough to notice that the identity \eqref{eqn:PolarizedRTTa_X} is conserved under $\rho$.
\end{proof}
Similarly to the case of $Y(\gl_n)$,  we will call $\rho$ an \textbf{evaluation} epimorphism. 
%\end{example}

\begin{thm}\label{thm:deformation}
The algebra $Yt(\mathcal{A})$ is a flat deformation of $\mathcal{U}(\mathcal{A}^{tern}[x])$.
\end{thm}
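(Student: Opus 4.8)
The plan is to establish a PBW-type theorem for $Yt(\mathcal{A})$, which is exactly what ``flat deformation of $\mathcal{U}(\mathcal{A}^{tern}[x])$'' means: a filtration on $Yt(\mathcal{A})$ whose associated graded algebra is isomorphic to $\mathcal{U}(\mathcal{A}^{tern}[x])$ (equivalently, to the symmetric algebra of $\mathcal{A}^{tern}[x]$ once one knows $\mathcal{A}^{tern}[x]$ is a Lie algebra and invokes classical PBW). Concretely, I would filter $Yt(\mathcal{A})$ by assigning to each generator $X_{ij}^{(l)}(a)$ degree $l$ (the ``loop degree'' $l-1$ could be used instead; the point is that $X^{(l)}$ sits in loop-grading component $l-1$ of $\mathcal{A}^{tern}[x]$, so I set $\deg X_{ij}^{(l)}(a) = l$ to match the standard Yangian filtration). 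The defining relation \eqref{eqn:PolarizedRTTa_X}, expanded in powers of $u^{-1}, v^{-1}$, reads
\begin{equation*}
[X_{ij}^{(r+1)}(a),X_{kl}^{(s)}(b)]-[X_{ij}^{(r)}(a),X_{kl}^{(s+1)}(b)] = \sum_{p+q=r+s}\bigl(X_{il}^{(p)}(a)X_{kj}^{(q)}(b)-X_{il}^{(q)}(a)X_{kj}^{(p)}(b)\bigr)
\end{equation*}
up to lower-order bookkeeping, so the commutator of two generators of degrees $r$ and $s$ is a sum of a degree $r+s$ commutator and terms of degree $r+s$ that are quadratic. In the associated graded this forces $[\overline{X_{ij}^{(r)}(a)},\overline{X_{kl}^{(s)}(b)}]$ to equal the degree $r+s-1$ bracket coming from the Lie structure of $\mathcal{A}^{tern}[x]$, i.e. $\mathrm{gr}\,Yt(\mathcal{A})$ is a quotient of $\mathcal{U}(\mathcal{A}^{tern}[x])$.

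The heart of the argument, as always with such deformations, is the opposite inequality: showing $\mathrm{gr}\,Yt(\mathcal{A})$ is \emph{not} smaller than $\mathcal{U}(\mathcal{A}^{tern}[x])$, i.e. that ordered monomials in the generators are linearly independent. My preferred route is to produce a faithful realization of $Yt(\mathcal{A})$ large enough to separate all ordered PBW monomials. The natural candidate is the evaluation-type construction: since by Proposition~\ref{prop:PTR} the assignment $x_{ij}(a,u)\mapsto \widetilde{G}_{ij}(a,u)$ into (a completion of) $Y(\gl_\infty)$ realizes the relations \eqref{eqn:PolarizedRTTa_X}, and since $Y(\gl_\infty)$ itself has a known PBW basis, I would pull back that basis. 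More robustly, one can iterate evaluation homomorphisms at distinct points: for parameters $c_1,\dots,c_N$ one composes $Yt(\mathcal{A})\to U(\mathcal{A}^{tern})^{\otimes N}$ using the coproduct-free ``shift'' maps (shifting $u\mapsto u-c_k$ inside each $\widetilde{G}$), and lets $N\to\infty$; a Vandermonde-type nonvanishing argument then shows that the images of the degree $\le N$ ordered monomials are linearly independent in $U(\mathcal{A}^{tern})^{\otimes N}$, since the top symbol of such a product records exactly the loop-degree data. This is the standard strategy (cf. the proof of PBW for $Y(\gl_n)$ in Molev's book, \cite{Molevbook}), and it transfers because all the structural identities used — the ternary relation and Proposition~\ref{prop:Evaluation} — are available in our setting.

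Before either direction can even be stated cleanly I would need the preliminary fact that $\mathcal{A}^{tern}[x]$ (equivalently $\mathcal{A}^{tern}$) is genuinely a Lie algebra with the bracket of Definition~\ref{def:TernaryAlg}; this is asserted in the excerpt (``which can be easily checked'') and I would simply invoke it, noting that the Jacobi identity follows from the associativity of $End(\mathcal{W})$ together with the relation \eqref{eqn:PolarizedRTT_1a}. I would then combine: (i) the surjection $\mathcal{U}(\mathcal{A}^{tern}[x])\twoheadrightarrow \mathrm{gr}\,Yt(\mathcal{A})$ from the relation analysis, and (ii) the injectivity of ordered monomials from the iterated-evaluation realization, to conclude the two are isomorphic, which is precisely flatness of the deformation. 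The main obstacle I anticipate is step (ii): one must check that the loop-evaluation maps are well-defined homomorphisms on all of $Yt(\mathcal{A})$ (not merely on the generating functions) — i.e. that relation \eqref{eqn:PolarizedRTTa_X} is the \emph{only} relation being imposed, so that there is no hidden collapse — and that the Vandermonde argument really does see every PBW monomial and not just those of low degree in the $X^{(1)}$'s. A secondary subtlety is that $\mathcal{A}^{tern}$ depends on the chosen data $(\mathcal{W},\{x_{ij}\},\psi)$, so the statement is implicitly ``for the fixed data'', and one should make sure the filtration degrees are intrinsic to $Yt(\mathcal{A})$ and not an artifact of a presentation; assigning $\deg = l$ to $X^{(l)}$ uniformly handles this.
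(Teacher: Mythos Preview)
Your primary approach---filter by loop degree, get the easy surjection $\mathcal{U}(\mathcal{A}^{tern}[x])\twoheadrightarrow \mathrm{gr}\,Yt(\mathcal{A})$ from the relations, and prove the hard direction by realizing $x_{ij}(a,u)\mapsto \widetilde{G}_{ij}(a,u)$ inside $Y(\gl_\infty)$ and pulling back the Yangian PBW---is exactly what the paper does, only the paper phrases it via the equivalent $h$-rescaling $\widetilde{X}_{ij}^{(r)}(a)=h^{r-1}X_{ij}^{(r)}(a)$ and reduces algebraic independence directly to that of the $t_{ij}^{(r)}$. Your iterated-evaluation/Vandermonde alternative is not in the paper and is not needed once the Yangian realization is available; the subtleties you flag (well-definedness of the homomorphism, that \eqref{eqn:PolarizedRTTa_X} is the only relation) are left implicit in the paper as well.
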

\begin{proof}
% If we define
% \[
% g_{ij}^{(r)}(a):= \sum\limits_{k=1}^n <\psi(a)e_{i},f_{k}> t^{(r)}_{jk},i,j=1,\ldots,n,a\in\mathcal{A}, r\in\mathbb{N}\cup{0},
% \]
We can rewrite polarized ternary relation as follows
\begin{eqnarray}
[X_{ij}^{(r)}(a),X_{kl}^{(s)}(b)]&=\sum\limits_{m=1}^{ \min(r,s)}(X_{il}^{(m-1)}(a)X_{kj}^{(r+s-m)}(b)-X_{il}^{(r+s-m)}(a)X_{kj}^{(m-1)}(b)),\label{eqn:PolarizedRTT_pt}\\
&i,j,k,l\in\mathbb{N},a,b\in\mathcal{A}, r,s\in\mathbb{N}\cup{0}.\nonumber
\end{eqnarray}
Let us define 
\[
\widetilde{X}_{ij}^{(r)}(a)= h^{r-1}X_{ij}^{(r)}(a),i,j\in\mathbb{N},a\in\mathcal{A}, r\in\mathbb{N}.
\]
Consequently we get
\begin{eqnarray}
[\widetilde{X}_{ij}^{(r)}(a),\widetilde{X}_{kl}^{(s)}(b)] &=
\psi(a)_{il}\widetilde{X}_{kj}^{(r+s-1)}(b)-\psi(b)_{kj}\widetilde{X}_{il}^{(r+s-1)}(a)\nonumber\\
&+h\sum\limits_{m=1}^{\min(r,s)-1}
(\widetilde{X}_{il}^{(m)}(a)\widetilde{X}_{kj}^{(r+s-1-m)}(b)-\widetilde{X}_{il}^{(r+s-1-m)}(a)\widetilde{X}_{kj}^{(m)}(b))\nonumber
\end{eqnarray}
In the limit $h\to 0$ we get the generating relations of $\mathcal{A}^{tern}[x]$. To show flatness let us start with family of linearly independent generators $\{\widetilde{X}_{ij}^{(r)}(a_k),(i,j,k,r)\in [1,\ldots,\dim \mathcal{A}]^3\times \mathbb{N}\}$ of $Yt(\mathcal{A})$. They span $Yt(\mathcal{A})$ by definition.
Let us show that they are algebraically independent. We have algebraic homomorphism
\[
\widetilde{X}_{ij}^{(r)}(a)\mapsto \widetilde{g}_{ij}^{(r)}(a):=h^{r-1}\sum\limits_{m=1}^n \psi(a)_{im} t^{(r)}_{jm}
\]

Assume that $\{\widetilde{X}_{ij}^{(r)}(a_k)\}_{ijkr}$ are not independent, then the family $\{\widetilde{g}_{ij}^{(r)}(a_k)\}_{ijkr}$ is not independent and, consequently, the family $\{t^{(r)}_{ij},i,j\in\mathbb{N},r\in\mathbb{N}\cup{0}\}$ is also algebraically dependent, which is a contradiction. 

%{\red Argument  about flatness from Olshanski-1992 needs to be %understood better???}
\end{proof}
\begin{prop}
$Yt(\mathcal{A})$ is $\mathcal{A}^{tern}$-module with adjoint action.
\end{prop}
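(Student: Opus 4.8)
The plan is to exhibit an explicit Lie algebra action of $\mathcal{A}^{tern}$ on $Yt(\mathcal{A})$ by derivations, and check that it respects the defining relations \eqref{eqn:PolarizedRTTa_X}. First I would define, for each generator $x_{kl}(b)\in\mathcal{A}^{tern}$ (i.e. the first-order term $X_{kl}^{(1)}(b)$), a linear operator $\ad(x_{kl}(b))$ on $Yt(\mathcal{A})$ by declaring its action on the generating functions to mimic the formula \eqref{eqn:PolarizedRTT_1a}: namely
\begin{equation*}
\ad(x_{kl}(b))\cdot x_{ij}(a,u):=\langle\psi(b)e_k,f_j\rangle\, x_{il}(a,u)-\langle\psi(a)e_i,f_l\rangle\, x_{kj}(b,u),
\end{equation*}
and extend it to all of $Yt(\mathcal{A})$ by the Leibniz rule (a derivation). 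One then has to verify three things: (a) the operator $\ad(x_{kl}(b))$ is well-defined, i.e. it kills the ideal generated by the relations \eqref{eqn:PolarizedRTTa_X}; (b) the assignment $x_{kl}(b)\mapsto\ad(x_{kl}(b))$ is linear in $b$ and a Lie algebra homomorphism from $\mathcal{A}^{tern}$ to $\mathrm{Der}(Yt(\mathcal{A}))$; (c) the action descends consistently (the $x_{kl}(b)$ are not all linearly independent as elements of $\mathcal{A}^{tern}$, so one must check the prescription depends only on the element of $\mathcal{A}^{tern}$, not on its presentation).

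The cleanest route to (a) and (c) simultaneously is to use the embeddings already established in the excerpt. By Proposition~\ref{prop:PTR} (and its version \eqref{eqn:PolarizedRTTa}), the generating functions $\widetilde{G}_{ij}(a,u)$ inside $Y(\gl_\infty)$ (or $Y(\gl_n)$) satisfy exactly relation \eqref{eqn:PolarizedRTTa_X}, so there is a natural algebra homomorphism $Yt(\mathcal{A})\to Y(\gl_\infty)$ sending $x_{ij}(a,u)\mapsto\widetilde{G}_{ij}(a,u)$. Meanwhile $\mathcal{A}^{tern}$ sits inside $Y(\gl_\infty)$ via $x_{ij}(a)\mapsto\widetilde{D}_{ij}(a)$ (the Corollary after the trace-map Proposition). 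Inside the associative algebra $Y(\gl_\infty)$ the adjoint action $z\mapsto[\,\cdot\,,z]$ is automatically a derivation, and formulas \eqref{eqn:PolarizedRTT_1a}--\eqref{eqn:PolarizedRTT_2a} say precisely that $\mathrm{ad}\,\widetilde{D}_{kl}(b)$ acts on the subalgebra generated by the $\widetilde{G}_{ij}(a,u)$ by the prescribed formula. Thus the module structure on the image is manifest; to transport it back to $Yt(\mathcal{A})$ itself one either works directly with the defining relations (the derivation property of $\mathrm{ad}$ plus a finite check that the formula is compatible with \eqref{eqn:PolarizedRTTa_X}, which is the same computation as in the proof of Proposition~\ref{prop:PTR}, read in the variable being differentiated), or one invokes faithfulness of $\psi$ so that $Yt(\mathcal{A})\hookrightarrow Y(\gl_\infty)$ and the statement is literally inherited.

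The remaining point (b) — that $[\ad(x_{ij}(a)),\ad(x_{kl}(b))]=\ad([x_{ij}(a),x_{kl}(b)])$ on $Yt(\mathcal{A})$ — follows from the Jacobi identity once (a) is in place: on generators both sides are computed from the bracket formula in $\mathcal{A}^{tern}$, and since both sides are derivations agreeing on the algebra generators they agree everywhere.

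I expect the main obstacle to be the well-definedness issue (a)/(c): one must confirm that applying the candidate derivation to each side of a defining relation \eqref{eqn:PolarizedRTTa_X} yields equal results, i.e. that the relation ideal is stable. This is not conceptually hard — it is exactly the associativity/consistency computation underlying Proposition~\ref{prop:PTR} — but it is the only step that is not purely formal, and if $\psi$ is not faithful one genuinely has to do it by hand rather than hiding behind the embedding into $Y(\gl_\infty)$. I would therefore either add the hypothesis that $\psi$ is faithful (harmless, since one may always replace $\psi$ by a faithful representation without changing $\mathcal{A}^{tern}$ up to isomorphism in the relevant cases) or carry out the one-line relation check explicitly, noting it mirrors \eqref{eqn:RTT_proof}.
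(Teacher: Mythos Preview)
Your approach is correct but takes an unnecessary detour. The paper's proof is a one-liner: it simply sets $r=1$ in the expanded relation \eqref{eqn:PolarizedRTT_pt} to obtain
\[
[X_{ij}^{(1)}(a),X_{kl}^{(s)}(b)]=\psi(a)_{il}\,X_{kj}^{(s)}(b)-\psi(b)_{kj}\,X_{il}^{(s)}(a),
\]
and observes that this already \emph{is} the adjoint action.

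The point you are missing is that $\mathcal{A}^{tern}$ is not external to $Yt(\mathcal{A})$: by construction the elements $X_{ij}^{(1)}(a)$ lie inside $Yt(\mathcal{A})$ and span a copy of $\mathcal{A}^{tern}$ there. Hence the ``adjoint action'' in the statement is literally the inner action $z\mapsto [X_{ij}^{(1)}(a),z]$ in the associative algebra $Yt(\mathcal{A})$. This makes all three of your verification steps automatic: (a) an inner commutator is always a well-defined derivation of an associative algebra, no relation-ideal check needed; (b) the Jacobi identity in $Yt(\mathcal{A})$ gives the Lie homomorphism property for free; (c) any linear relation among the $X_{ij}^{(1)}(a)$ in $Yt(\mathcal{A})$ tautologically produces the same relation among the operators $\ad X_{ij}^{(1)}(a)$. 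Your route through the embedding into $Y(\gl_\infty)$ recovers exactly this, but one level removed --- you are using that the action is inner in the \emph{image} rather than in $Yt(\mathcal{A})$ itself, which then forces the extra faithfulness hypothesis or the hand check. Recognising that the action is already inner in $Yt(\mathcal{A})$ eliminates all of that.
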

\begin{proof}
It is enough to apply formula \eqref{eqn:PolarizedRTT_pt} with $r=1$. Indeed, we have
\begin{equation}
[X_{ij}^{(1)}(a),X_{kl}^{(s)}(b)]=\psi(a)_{il} X_{kj}^{(s)}(b)-X_{il}^{(s)}(a)\psi(b)_{kj},\label{eqn:Moduleaction}
\end{equation}
and the result follows.
\end{proof}
\begin{example}\label{ex:sl2}
Let us show that $Yt(sl(2))=Y(gl(3))$. Indeed, let us denote by $\{e_1,e_2,e_3\}$ the standard basis $sl(2)$ i.e.
\[
[e_1,e_2]=e_3, [e_3,e_2]=-2e_2, [e_3,e_1]=2e_1.
\]
Then we have that
\[
g_{\cdot j}^{(r)}(e_{\cdot})=\left(
\begin{array}{ccc}
0 & t_{j3}^{(r)} & -2t_{j1}^{(r)}\\
-t_{j3}^{(r)} & 0 & 2t_{j2}^{(r)}\\
2t_{j1}^{(r)} & -2t_{j2}^{(r)} & 0
\end{array}
\right),j=1,\ldots,3, r\in \mathbb{N}\cup{0},
\]
and the result follows.
\end{example}
Defining relations \eqref{eqn:PolarizedRTTa_X} can be rewritten in the matrix form. %as in the book of Molev \cite{Molevbook}, pp. 2-4. 
Let $X\in End\,(\mathbb{C}^{\infty})\times Yt(\mathcal{A})[[u^{-1}]]$ be an infinite matrix with elements $x_{ij}(a,u)$, $i,j\in\mathbb{N}$.
The following proposition is an analogue of  \cite[Proposition 1.2.2]{Molevbook}.
 %From now on we will use the standard notation from the book of Molev \cite{Molevbook}, pp. 2-4.
% \begin{rem}
% Note that in the example \ref{ex:sl2} we have in this notation
% \[
% T(e_1,u)=\left(
% \begin{array}{ccc}
% 0 & 0 & 0\\
% -t_{13}(u) & -t_{23}(u) & -t_{33}(u)\\
% 2t_{11}(u) & 2t_{21}(u) & 2t_{31}(u)
% \end{array}\right),
% T(e_2,u)=\left(
% \begin{array}{ccc}
% t_{13}(u) & t_{23}(u) & t_{33}(u)\\
% 0 & 0 & 0\\
% -2t_{12}(u) & -2t_{22}(u) & -2t_{32}(u) 
% \end{array}\right),
% \]
% \[
% T(e_3,u)=\left(
% \begin{array}{ccc}
% -2t_{11}(u) & -2t_{21}(u) & -2t_{31}(u)\\
% 2t_{12}(u) & 2t_{22}(u) & 2t_{32}(u)\\
% 0 & 0 & 0
% \end{array}\right),
% \]
% i.e. if $a=a_1e_1+a_2e_2+a_3e_3\in sl(2)$ then
% \[
% T(a,u)=\left(
% \begin{array}{ccc}
% a_2t_{13}(u)-2 a_3t_{11}(u) & a_2t_{23}(u)-2 a_3t_{21}(u) & a_2t_{33}(u)-2 a_3t_{31}(u)\\
% 2a_3t_{12}(u)-a_1t_{13}(u) & 2a_3t_{22}(u)-a_1t_{23}(u) & 2a_3t_{32}(u)-a_1t_{j3}(u)\\
% 2a_1t_{11}(u)-2 a_2t_{12}(u) & 2a_1t_{21}(u)-2 a_2t_{22}(u) & 2a_1t_{31}(u)-2 a_2t_{32}(u) 
% \end{array}\right).
% \]
% \end{rem}
%\begin{defin}
%
%
%\end{defin}
\begin{prop}\label{prop:RTTpolar}
\begin{eqnarray}\label{eqn:RTTpolar_1}
&R(u-v)X_1(a,u)X_2(b,v)=X_2(b,v)X_1(a,u)R(u-v)+\frac{(X_2(b,v)X_1(a,u)-X_2(a,v)X_1(b,u))}{u-v}P\nonumber\\
&=X_2(a,v)X_1(b,u)R(u-v)+X_2(b,v)X_1(a,u)-X_2(a,v)X_1(b,u)
,a,b\in\mathcal{A},u,v\in\mathbb{C}\nonumber
\end{eqnarray}
\end{prop}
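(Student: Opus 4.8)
The plan is to derive the matrix identity directly from the scalar relation \eqref{eqn:PolarizedRTTa_X}, exactly mirroring the standard passage from the scalar $RTT$ relation to its tensor form (cf. \cite[Proposition 1.2.2]{Molevbook}). First I would fix the notation: $X_1(a,u)$ and $X_2(b,v)$ denote the images of $X(a,u)$ and $X(b,v)$ under the two embeddings $End(\mathbb{C}^\infty)\hookrightarrow End(\mathbb{C}^\infty)^{\otimes 2}$, so that $(X_1(a,u))_{ik,jl}=x_{ij}(a,u)\delta_{kl}$ and $(X_2(b,v))_{ik,jl}=\delta_{ij}x_{kl}(b,v)$, while $P$ is the permutation operator with $P(e_i\otimes e_k)=e_k\otimes e_i$. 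The task is then to compare the matrix entries of both sides, indexed by pairs $(i,k)$ (rows) and $(j,l)$ (columns).

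Next I would compute the $(ik,jl)$ entry of $R(u-v)X_1(a,u)X_2(b,v)$, using $R(u-v)=I-\tfrac{1}{u-v}P$. The identity part contributes $x_{ij}(a,u)x_{kl}(b,v)$, and the $P$ part contributes $-\tfrac{1}{u-v}x_{kj}(a,u)x_{il}(b,v)$ (the permutation swaps the two row indices before the product is taken). On the other side, the $(ik,jl)$ entry of $X_2(b,v)X_1(a,u)R(u-v)$ is $x_{kl}(b,v)x_{ij}(a,u)-\tfrac{1}{u-v}x_{kj}(b,v)x_{il}(a,u)$, and the entry of the correction term $\tfrac{1}{u-v}(X_2(b,v)X_1(a,u)-X_2(a,v)X_1(b,u))P$ is $\tfrac{1}{u-v}\bigl(x_{kj}(b,v)x_{il}(a,u)-x_{kj}(a,v)x_{il}(b,u)\bigr)$. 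Adding these, the $x_{kj}(b,v)x_{il}(a,u)$ terms cancel, and what remains to be checked is
\[
x_{ij}(a,u)x_{kl}(b,v)-\tfrac{1}{u-v}x_{kj}(a,u)x_{il}(b,v)=x_{kl}(b,v)x_{ij}(a,u)+\tfrac{1}{u-v}\bigl(x_{il}(a,u)x_{kj}(b,v)-x_{kj}(a,v)x_{il}(b,u)\bigr),
\]
which after moving $x_{kl}(b,v)x_{ij}(a,u)$ to the left and the $x_{kj}(a,u)x_{il}(b,v)$ term to the right is precisely $(u-v)[x_{ij}(a,u),x_{kl}(b,v)]=x_{il}(a,u)x_{kj}(b,v)-x_{il}(a,v)x_{kj}(b,u)$, i.e. \eqref{eqn:PolarizedRTTa_X} after renaming indices. (Care is needed with the ordering of non-commuting factors, since unlike the genuine $RTT$ case the entries of $X_1(a,u)$ and $X_2(b,v)$ need not commute; but every step above keeps the $a$-factor to the left of the $b$-factor where required, so no reordering is performed illegitimately.) For the second equality I would instead expand $X_2(a,v)X_1(b,u)R(u-v)=X_2(a,v)X_1(b,u)-\tfrac{1}{u-v}X_2(a,v)X_1(b,u)P$ and add $X_2(b,v)X_1(a,u)-X_2(a,v)X_1(b,u)$; entrywise this again reduces to \eqref{eqn:PolarizedRTTa_X}, this time read with the roles of $u,v$ and $a,b$ appropriately swapped.

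The main obstacle — though a mild one — is bookkeeping: keeping the two tensor slots, the four external indices, and the non-commutativity of the factors all straight simultaneously, and making sure that the "correction'' term (which has no analogue in the honest $RTT$ relation and reflects the failure of polarizability noted in Remark~\ref{rem:RTTGeneralization}) is placed on the correct side with the correct sign. Since \eqref{eqn:PolarizedRTTa_X} is an identity of formal power series in $u^{-1},v^{-1}$, one should also remark that the division by $u-v$ is to be interpreted as multiplication by the geometric expansion $\tfrac{1}{u}\sum_{k\geq 0}(v/u)^k$ (valid since $X(a,u)=\psi(a)\,Id+O(u^{-1})$), so that all the manipulations above take place inside $Yt(\mathcal{A})[[u^{-1},v^{-1}]][(u-v)^{-1}]$ and the claimed matrix identities hold coefficient-by-coefficient.
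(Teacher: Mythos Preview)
The paper does not actually supply a proof of Proposition~\ref{prop:RTTpolar}: it is stated without argument, immediately after the remark that it is ``an analogue of \cite[Proposition 1.2.2]{Molevbook}.'' Your approach --- write out the $(ik,jl)$ matrix entry of each side using the standard tensor conventions and reduce the resulting identity to the scalar polarized relation \eqref{eqn:PolarizedRTTa_X} --- is precisely the computation the paper is pointing to by citing Molev, so the strategies coincide.

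One caution on execution: in your displayed ``what remains to be checked'' equation a stray term $\tfrac{1}{u-v}x_{il}(a,u)x_{kj}(b,v)$ appears on the right-hand side that is not produced by the preceding cancellation (after the $x_{kj}(b,v)x_{il}(a,u)$ terms cancel, the right-hand side should read simply $x_{kl}(b,v)x_{ij}(a,u)-\tfrac{1}{u-v}x_{kj}(a,v)x_{il}(b,u)$). More substantively, the entrywise identity you then need is
\[
(u-v)[x_{ij}(a,u),x_{kl}(b,v)]=x_{kj}(a,u)x_{il}(b,v)-x_{kj}(a,v)x_{il}(b,u),
\]
which has the index pattern $x_{kj}(a,\cdot)x_{il}(b,\cdot)$, whereas \eqref{eqn:PolarizedRTTa_X} has $x_{il}(a,\cdot)x_{kj}(b,\cdot)$. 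These are not literally the same relation, and one is not a trivial relabelling of the other (the pairing of the Lie-algebra labels $a,b$ with the index pairs $il,kj$ is swapped). So either an additional symmetry of \eqref{eqn:PolarizedRTTa_X} must be invoked, or the tensor conventions for $X_1,X_2$ in the proposition differ from the ones you assumed; in either case this step needs to be made explicit rather than declared ``precisely \eqref{eqn:PolarizedRTTa_X} after renaming indices.'' The overall plan is correct and matches the paper's intent, but this index-matching is the one place where the polarized situation genuinely differs from the classical Yangian case, and it should be handled carefully.
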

\section{Acknowledgments}
\noindent V.Futorny is partially supported by NSF of China (12350710178 and 12350710787). 
M. Neklyudov would like to express sincere gratitude to Universidade Federal do Amazonas (UFAM) and Coordenação de Aperfeiçoamento de Pessoal de Nível Superior (CAPES) for their unwavering support.

\section{Data Availability}

We do not analyse or generate any datasets, because our work proceeds within a theoretical and mathematical approach. One can obtain the relevant materials from the references below.

\section{Conflict of Interest}

The authors declare that there is no conflict of interest.

\end{document}